\documentclass{article}

\usepackage{amsmath,amssymb,amsthm,amsfonts,mathrsfs,latexsym,mathtools,bm}
\usepackage[abbrev]{amsrefs}
\usepackage{stmaryrd}	
\usepackage[OT2,T1]{fontenc}
\usepackage[russian,english]{babel}
\usepackage{cleveref}
\usepackage{autonum}
\usepackage[marginparwidth=0pt,margin=25truemm]{geometry}
\usepackage{enumitem}

\theoremstyle{plain}
\newtheorem{thm}{Theorem}[section]
\newtheorem{lem}[thm]{Lemma}
\newtheorem{prop}[thm]{Proposition}
\newtheorem{cor}[thm]{Corollary}

\theoremstyle{definition}

\newtheorem{dfn}[thm]{Definition}
\newtheorem{rem}[thm]{Remark}

\renewcommand{\labelenumi}{(\roman{enumi})}

\setcounter{tocdepth}{1}

\allowdisplaybreaks[2]

\everymath{\displaystyle}

\font\sevency=wncyr7
\newcommand{\sha}{ {\,\hbox{\sevency X}\, } }
\newcommand{\x}{ { \hbox{\sevency X} } }

\newcommand{\rising}[2]{\raisebox{{#1}pt}{{#2}}}				

\newcommand{\wt}{\mathrm{wt}}

\newcommand{\QQ}{\mathbb{Q}}
\newcommand{\ZZ}{\mathbb{Z}}
\newcommand{\RR}{\mathbb{R}}

\newcommand{\CC}{\mathbb{C}}
\newcommand{\HH}{\mathbb{H}}

\newcommand{\ha}{\mathfrak{H}}
\newcommand{\relmid}{\mathrel{}\middle|\mathrel{}}

\newcommand{\xx}{\mathrm{x}}
\newcommand{\yy}{\mathrm{y}}
\newcommand{\IT}{\widetilde{\mathrm{I}}}
\newcommand{\II}{\mathrm{I}}

\newcommand{\eps}{\varepsilon}
\newcommand{\tsha}{\widetilde{\sha}}
\newcommand{\tas}{\widetilde{*}}
\newcommand{\vvec}[3]{\bigg(\begin{matrix}#1\\#2\end{matrix};#3\bigg)}

\title{Shuffle regularization for multiple Eisenstein series of level $N$}
\author{Hayato Kanno}
\date{}

\begin{document}

\maketitle
\begin{abstract}
In 2018, Bachmann and Tasaka discovered a relationship between the Fourier expansion of multiple Eisenstein series of level $1$ and the Goncharov coproduct on formal iterated integrals corresponding to multiple zeta values. They also constructed shuffle regularized multiple Eisenstein series of level $1$, which satisfy the shuffle relations similar to multiple zeta values. In this paper, we expand their results to arbitrary level and give some linear relations among multiple Eisenstein series of level $N$.
\end{abstract}

\tableofcontents

\section{Introduction}\label{section:intro}

A multiple zeta value (MZV for short) is a generalization of the Riemann zeta value and is defined for integers $n_1,\dots,n_{r-1}\ge1$ and $n_r\ge2$ by
$$\zeta(n_1,\dots,n_r)=\sum_{0<k_1<\cdots<k_r}\frac{1}{k_1^{n_1}\cdots k_r^{n_r}}.$$
MZV also have an iterated integral representation. These two different representations give $\QQ$-linear relations among MZVs, which are called \textit{double shuffle relations}. For example, the product of $\zeta(2)$ and $\zeta(3)$ can be expanded in two ways as
\begin{align}
\zeta(2)\zeta(3)&=\zeta(2,3)+\zeta(3,2)+\zeta(5)\\
&=3\zeta(2,3)+\zeta(3,2)+6\zeta(1,4),
\end{align}
from which we deduce $$\zeta(5)=2\zeta(2,3)+6\zeta(1,4).$$
It is known that there are numerous $\QQ$-linear relations among MZVs. Zagier (\cite{Zag92}, \cite{Zag93}) initially noted that there exist some relations among double zeta values whose coefficients are originated from modular forms for $\mathrm{SL}_2(\ZZ)$. He also conjectured the dimension of the space spanned by double zeta values of $\zeta(a,k-a)$ $(a:\mathrm{odd},2\le a\le k/2)$, which implies that the number of linearly independent $\QQ$-linear relations among them is equal to $\mathrm{dim}_{\CC}\mathcal{M}_k(\mathrm{SL}_2(\ZZ))$, the dimension of the space of modular forms of weight $k$ for $\mathrm{SL}_2(\ZZ)$. 

A formulation of modular relations for double zeta values was first established by Gangl, Kaneko and Zagier \cite{GKZ}. It uses the space of even period polynomials, which by the theory of Eichler-Shimura is isomorphic to the space of modular forms for $\mathrm{SL}_2(\ZZ)$. They also introduced double Eisenstein series $G_{r,s}(\tau)$ defined for integers $r,s\ge2$ as a holomorphic function on the upper half plane $\HH$ by
$$G_{r,s}(\tau)\coloneq\sum_{\substack{0\prec\lambda_1\prec\lambda_2\\\lambda_1,\lambda_2\in\ZZ\tau+\ZZ}}\frac{1}{\lambda_1^{r}\lambda_2^s},$$
where the order of lattice points $l_1\tau+m_1\succ l_2\tau+m_2$ means $l_1>l_2$ or $l_1=l_2,m_1>m_2$. They constructed regularized double Eisenstein series $G_{r,s}(q)$ defined for integers $r,s\ge1$ as a $q$-series, which satisfies the extended double shuffle relations. Double Eisenstein series are not modular forms in general, but they are expected to play an important role for modular relations among MZVs.
Tasaka (\cite{Tasaka20}) gave an explicit formula for decomposing a Hecke eigenform into double Eisenstein series. An example of such a decomposition is the following:
$$22680G^{\frac{1}{2}}_{9,3}(q)-35364G^{\frac{1}{2}}_{7,5}(q)-29145G^{\frac{1}{2}}_{5,7}(q)+13006G^{\frac{1}{2}}_{3,9}(q)+22680G^{\frac{1}{2}}_{1,11}(q)=\frac{1}{680}\Delta(q).$$
Here, we denote $G^{\frac{1}{2}}_{r,s}(q)=G_{r,s}(q)+\frac{1}{2}G_{r+s}(q)$ and $\Delta(q)=q\prod_{n>0}(1-q^n)^{24}$.

In \cite{BT}, Bachmann and Tasaka studied multiple Eisenstein series (MES for short) $G_{n_1,\dots,n_r}(\tau)$ for general depth $r>0$, which are defined for $n_1,\dots,n_r\ge2$ as a generalization of double Eisenstein series. They revealed the relationship between the Fourier expansion of MES and the Goncharov coproduct on iterated integrals. They also constructed shuffle regularized MES $G^\sha_{n_1,\dots,n_r}(q)$ as $q$-series, which satisfies restricted double shuffle relations. For example, we have $$G_5(q)=2G_{2,3}(q)+6G^\sha_{1,4}(q).$$

In this paper, we expand their results to general level. Since MES can be viewed as a multivariate generalization of the classical Eisenstein series, we can consider MES with levels as a generalization of classical Eisenstein series for congruence subgroups. Kaneko and Tasaka (\cite{KTa}) considered double Eisenstein series of level 2. They provided the relationship between double zeta values of level 2 and modular forms of level 2, and obtained the analogous results of Gangl--Kaneko--Zagier \cite{GKZ} and Kaneko \cite{Kaneko04}. Kina (\cite{Kina24}) considered double Eisenstein series of level 4, whose constant term of its Fourier expansion is a double $\widetilde{T}$-value introduced in Kaneko--Tsumura \cite{KTs}, and obtained the analogous results of Gangl--Kaneko--Zagier \cite{GKZ} and Kaneko \cite{Kaneko04}. For general level, Yuan and Zhao (\cite{YZ15}) studied double Eisenstein series of level $N$ and gave the regularization such that they satisfy extended double shuffle relations.

Let $N\in\ZZ_{>0}$ and $\eta=\exp(2\pi\sqrt{-1}/N)$. Let $\ha^1$, $\widetilde{\ha^0}$ and $\ha^2$ be non-commutative polynomial rings of double indices defined by
\begin{align}
\ha^1&\coloneq\QQ\left\langle\binom{n}{a}\relmid n\in\ZZ_{\ge1},a\in\ZZ/N\ZZ\right\rangle,\\
\widetilde{\ha^0}&\coloneq\QQ+\sum_{n\ge2,a\in\ZZ/N\ZZ}\ha^1\binom{n}{a},\\
\ha^2&\coloneq\QQ\left\langle\binom{n}{a}\relmid n\in\ZZ_{\ge2},a\in\ZZ/N\ZZ\right\rangle.
\end{align}
Note that $\widetilde{\ha^0}$, $\ha^2$ are the space of admissible indices for MZV and MES of level $N$, respectively. We can equip $\ha^1$ two different commutative and associative algebra structures with the harmonic product $\tas$ and the shuffle product $\tsha$. We will state the precise definition of these products in \Cref{subsec:MZV}. For $r\ge2$, $n_1,\dots,n_r\ge1$ and $a_1,\dots,a_r\in\ZZ/N\ZZ$, let $\binom{n_1,\dots,n_r}{a_1,\dots,a_r}$ denote the concatenation $\binom{n_1}{a_1}\cdots\binom{n_r}{a_r}$. 
\begin{dfn}
We define MES of level $N$ as a holomorphic function on upper half plane $\HH$ for integers $n_1,\dots,n_r\ge2$ and $a_1,\dots,a_r\in\ZZ/N\ZZ$ by
$$G\bigg(\begin{matrix}n_1,\dots,n_r\\a_1,\dots,a_r\end{matrix};\tau\bigg)
\coloneq\lim_{L\to\infty}\lim_{M\to\infty}\sum_{\substack{0\prec l_1N\tau+m_1\prec\dots\prec l_rN\tau+m_r\\l_i\in\ZZ_L,m_i\in\ZZ_M,m_i\equiv a_i\hspace{-8pt}\pmod{N}}}\frac{1}{(l_1N\tau+m_1)^{n_1}\dots(l_rN\tau+m_r)^{n_r}}.$$
Here, $\ZZ_L=\{l\in\ZZ\mid\left|l\right|\le L\}$. The order of lattice points is defined by
$$l_1\tau+m_1\succ l_2\tau+m_2\xLeftrightarrow{\mathrm{def}}l_1>l_2\quad\text{or}\quad\begin{cases}l_1=l_2\\m_1>m_2\end{cases}.$$
\end{dfn}
Some of classical Eisenstein series can be expressed via this function. For instance, for $k\ge3$ and $a\in(\ZZ/N\ZZ)\setminus\{0\}$, the sum
$$G\vvec{k}{a}{\tau}+(-1)^kG\vvec{k}{-a}{\tau}=\!\!\!\!\sum_{\substack{l,m\in\ZZ\\l\equiv 0, m\equiv a\hspace{-3pt}\pmod{N}}}\!\!\!\!\frac{1}{l\tau+m}$$
is a holomorphic modular form for $\Gamma_1(N)$ of weight $k$.\color{black}\\
We often regard $G$ as the $\QQ$-linear map $G:\ha^2\to\mathcal{O}(\HH)$ by sending a word $\binom{n_1,\dots,n_r}{a_1,\dots,a_r}$ to $G\vvec{n_1,\dots,n_r}{a_1,\dots,a_r}{\tau}$ and an enpty word to $G(\emptyset;\tau)=1$. For instance,
$$G\left(\binom{n_1}{a_1}+2\binom{n_2,n_3}{a_2,a_3}\right)=G\vvec{n_1}{a_1}\tau+2G\vvec{n_2,n_3}{a_2,a_3}\tau.$$
MES of arbitrary level are studied in Yuan--Zhao \cite{YZ15} in the case of depth 2.
The Fourier expansion can be expressed with MZVs and multiple divisor function of level $N$. For instance, it holds
\begin{align}\label{eq:fou}
G\bigg(\begin{matrix}2,3\\1,1\end{matrix};\tau\bigg)=\zeta\binom{2,3}{1,1}+\zeta\binom{2}{0}g\bigg(\begin{matrix}3\\1\end{matrix};q\bigg)+\zeta\binom{2}{1}g\bigg(\begin{matrix}3\\1\end{matrix};q\bigg)+3\zeta\binom{3}{0}g\bigg(\begin{matrix}2\\1\end{matrix};q\bigg)+g\bigg(\begin{matrix}2,3\\1,1\end{matrix};q\bigg).
\end{align}
where $q=e^{2\pi\sqrt{-1}\tau}$. Here, MZV of level $N$ are defined for integers $n_1,\dots,n_{r-1}\ge1$, $n_r\ge2$ and $a_1,\dots,a_r\in\ZZ/N\ZZ$ by
$$\zeta\binom{n_1,\dots,n_r}{a_1,\dots,a_r}\coloneq\sum_{\substack{0<k_1<\dots<k_r\\\forall i,k_i\equiv a_i\hspace{-8pt}\pmod{N}}}\frac{1}{k_1^{n_1}\cdots k_r^{n_r}}\in\RR,$$
and the multiple divisor functions of level $N$ are defined for integers $n_1,\dots,n_r\ge1$ and $a_1,\dots,a_r\in\ZZ/N\ZZ$ by
$$g\bigg(\begin{matrix}n_1,\dots,n_r\\a_1,\dots,a_r\end{matrix};q\bigg)\coloneq\left(\frac{-2\pi\sqrt{-1}}{N}\right)^n\sum_{\substack{0<d_1<\cdots<d_r\\c_1,\dots,c_r>0}}\prod_{i=1}^r\frac{\eta^{a_ic_i}c_i^{n_i-1}}{(n_i-1)!}q^{c_id_i}\in\CC\llbracket q\rrbracket,$$
where $n=n_1+\cdots+n_r$. Let $\zeta:\widetilde{\ha^0}\to\RR$ and $g:\ha^1\to\CC\llbracket q\rrbracket$ be $\QQ$-linear maps, which send a word $w$ to $\zeta(w)$ and $g(w;q)$, respectively and $\zeta(\emptyset)=g(\emptyset;q)=1$.

Goncharov considered the algebra generated by formal iterated integrals $\II(a_0;a_1,\dots,a_m;a_{m+1})$, which correspond to iterated integrals
$$\int_{a_0}^{a_{m+1}}\frac{dt}{t-a_1}\cdots\frac{dt}{t-a_m}\coloneq\int_{a_0<t_1<\cdots<t_m<a_{m+1}}\frac{dt_1}{t_1-a_1}\cdots\frac{dt_m}{t_m-a_m}.$$
He proved the algebra has a Hopf algebra structure with the Goncharov coproduct $\Delta$. Let $\widetilde{\mathcal{I}^1}$ be a $\QQ$-algebra spanned by formal iterated integrals corresponding to MZVs of level $N$. $\widetilde{\mathcal{I}^1}$ is also a Hopf algebra with a coproduct $\Delta_\mu$ induced from Goncharov coproduct $\Delta$. We will state the precise definition of $\Delta_\mu$ and $\widetilde{\mathcal{I}^1}$ in \Cref{subsection:FMZV} and \Cref{subsection:CompGon}, respectively. Since we know that $\widetilde{\mathcal{I}^1}$ is isomorphic to $(\ha^1,\tsha)$ as $\QQ$-algebras, we can equip $\ha^1$ with a Hopf algebra structure. For example we have
\begin{align}\label{eq:cop}
\Delta_\mu\left(\binom{2,3}{1,1}\right)=\binom{2,3}{1,1}\otimes1+\binom{2}{0}\otimes\binom{3}{1}+\binom{2}{1}\otimes\binom{3}{1}+3\binom{3}{0}\otimes\binom{2}{1}+1\otimes\binom{2,3}{1,1}.
\end{align}
Now, comparing equations (\ref{eq:fou}) and (\ref{eq:cop}), we can see the relationships between the Fourier expansion of MES of level $N$ and the Goncharov coproduct. The first main result of this paper is as follows.

\begin{thm}\label{thm:main1}
For any $w\in\ha^2$, we have
$$G(w;\tau)=(\zeta\star g)(w;q)\qquad(q=e^{2\pi\sqrt{-1}\tau}),$$
where $\zeta\star g=m\circ(\zeta\otimes g)\circ\Delta_\mu$ and $m$ is multiplication on $\CC\llbracket q\rrbracket$.
\end{thm}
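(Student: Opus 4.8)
The plan is to compute the Fourier expansion of $G(w;\tau)$ directly from its defining lattice sum and to recognize the result, stratum by stratum, as the image of $\Delta_\mu(w)$ under $\zeta\otimes g$ followed by multiplication. Write $\lambda_i=l_iN\tau+m_i$ and recall that the order forces $l_1\le\cdots\le l_r$ with $l_1\ge0$. The natural decomposition is to stratify the summation region by the weakly increasing sequence of $l$-coordinates: group the indices $1,\dots,r$ into the maximal consecutive blocks on which $l_i$ is constant, and separate the (possibly empty) bottom block with $l_i=0$ from the blocks sitting on lines $l>0$. On the bottom block the points satisfy $0<m_1<\cdots<m_j$ with $m_i\equiv a_i\pmod N$, so this block contributes exactly the level-$N$ multiple zeta value $\zeta\binom{n_1,\dots,n_j}{a_1,\dots,a_j}$, which is convergent since $w\in\ha^2$ forces $n_j\ge2$. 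The content of the theorem is that, after summing over the growing blocks, the remaining factors assemble into the $g$-part and the accompanying integer coefficients reproduce $\Delta_\mu$.

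The first technical ingredient is the level-$N$ Lipschitz (monotangent) formula: for $l\ge1$ and $n\ge1$,
$$\sum_{m\equiv a\,(N)}\frac{1}{(lN\tau+m)^n}=\left(\frac{-2\pi\sqrt{-1}}{N}\right)^n\frac{1}{(n-1)!}\sum_{c>0}c^{n-1}\eta^{ac}q^{cl},$$
obtained by writing $m=a+Nm'$, so that $lN\tau+m=N(l\tau+a/N+m')$, and applying the classical Lipschitz formula at the point $l\tau+a/N$. Summing the right-hand side over $l>0$ recovers the depth-one divisor function $g\left(\binom{n}{a};q\right)$, so a single growing line carrying one point produces exactly one factor of $g$. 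For $n=1$ the left-hand side is only conditionally convergent; this is precisely why the definition of $G$ uses the iterated limit $\lim_{L\to\infty}\lim_{M\to\infty}$, the inner limit $\lim_{M\to\infty}$ realizing the symmetric Eisenstein summation under which the $n=1$ case remains valid. I would record this as a lemma and use it to justify all subsequent rearrangements.

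When a growing line $l>0$ carries several points, the corresponding factor is a level-$N$ multitangent-type sum $\sum_{m_1<\cdots<m_s,\,m_i\equiv a_i}\prod(lN\tau+m_i)^{-n_i}$. The key algebraic step is to reduce this to monotangent sums by partial fractions in the variables $X_i=lN\tau+m_i$: repeatedly decomposing $X_i^{-n_i}X_{i+1}^{-n_{i+1}}$ into terms supported on a single $X_i$ or $X_{i+1}$ together with a power of the difference $X_{i+1}-X_i=m_{i+1}-m_i$ expresses the multitangent sum as a $\QQ$-linear combination of monotangent sums. Because $m_{i+1}-m_i>0$ and $m_{i+1}-m_i\equiv a_{i+1}-a_i\pmod N$, the resulting coefficients—built from nested sums of $1/(m_{i+1}-m_i)^{\bullet}$—are themselves level-$N$ multiple zeta values, convergent (the difference exponents are $\ge2$) and with residues prescribed by differences of the $a_i$, while the binomial factors from the partial fractions are exactly the integers occurring in $\Delta_\mu$ (e.g. the coefficient $3$ of $\binom{3}{0}\otimes\binom{2}{1}$ arises here, with residue $a_2-a_1=0$). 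Assembling the bottom-block zeta value, the internal difference zeta values, and the monotangent $g$-factors, one obtains $G(w;\tau)$ as a sum of products $\zeta(u)\,g(v)$.

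The main obstacle is to prove that this sum is exactly $m\circ(\zeta\otimes g)\circ\Delta_\mu(w)$, i.e. that the combinatorics of the stratification plus partial fractions reproduces the explicit Goncharov coproduct at level $N$. I would handle this by following the level-$1$ argument of Bachmann--Tasaka through the isomorphism $(\ha^1,\tsha)\cong\widetilde{\mathcal{I}^1}$: the multitangent-to-monotangent reduction is the analytic shadow of Goncharov's path-cutting formula for $\Delta_\mu$, so it suffices to match, on each stratum, which letters become a $\zeta$-argument versus a $g$-argument and with which residues. Products of zeta values coming from distinct strata combine, via the level-$N$ harmonic product, into the single word $u$ attached to each coproduct term, so the genuinely new points at level $N$ are the bookkeeping of the residues $a_i$ and of the factors $\eta^{a_ic_i}$—in particular checking that the residue of every difference zeta value matches the residue dictated by the coproduct—together with the structural fact that $\Delta_\mu(\ha^2)\subseteq\widetilde{\ha^0}\otimes\ha^1$, which guarantees $\zeta$ is only ever evaluated on admissible words so that no regularization of the left factor is needed. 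Once these are verified, comparing coefficients term by term, as in the displayed pair \eqref{eq:fou} and \eqref{eq:cop}, completes the proof.
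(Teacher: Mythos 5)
Your strategy is essentially the paper's own: the stratification by maximal blocks of constant $l$-coordinate is precisely the decomposition of $G$ into the portions $G_{w_1\cdots w_r}$ indexed by words in $\{\xx,\yy\}^*$, the partial-fraction reduction of each line's contribution to monotangent sums is \Cref{lem:multitan}, the level-$N$ Lipschitz formula is \Cref{lem:monotan}, and the term-by-term match with the explicit coproduct is \Cref{prop:phitilde} via \Cref{lem:phi}. So the architecture is correct, and your checklist at the end (residue bookkeeping, $\Delta_\mu(\ha^2)\subseteq\widetilde{\ha^0}\otimes\ha^1$) is the right one.

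There is, however, one concrete missing step. The partial-fraction decomposition of a multitangent factor unavoidably produces terms in which the surviving monotangent slot has exponent $k_{q_j}=1$, and $\Psi\big(\begin{smallmatrix}1\\a\end{smallmatrix};N\tau\big)$ is not a pure $q$-series: by \Cref{lem:monotan} it carries the additional constant $-\pi\sqrt{-1}/N$. Consequently the sum over $0<d_1<\cdots<d_h$ of products of monotangents does \emph{not} directly assemble into $g\big(\begin{smallmatrix}k_{q_1},\dots,k_{q_h}\\a_{q_1},\dots,a_{q_h}\end{smallmatrix};q\big)$; \Cref{lem:divisor} applies only when all exponents are at least $2$. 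You assert that ``a single growing line carrying one point produces exactly one factor of $g$'' and, although you correctly flag the conditional convergence of the weight-one monotangent and the role of Eisenstein summation, you never deal with the constant term that survives that summation. The paper closes this gap with the antipode relation for MZVs of level $N$ (\Cref{cor:antizeta}): since $-\pi\sqrt{-1}/N$ does not depend on the residue $a_{q_j}$, the total coefficient of each such spurious contribution, summed over the position $q_j$ of the weight-one entry, is exactly the antipode sum and hence vanishes. Without this (or an equivalent cancellation argument) the identification of the $g$-part, and hence the match with $\Delta_\mu$, does not go through. A smaller inaccuracy: your parenthetical claim that the difference exponents are all $\ge2$ is false --- partial fractions preserve only the total multidegree, and entries equal to $1$ do occur among the difference zeta values; only the exponent attached to the monotangent slot can be forced to be $\ge2$, and precisely by the cancellation just described.
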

By using this, we can construct shuffle regularized MES of level $N$. Kitada constructed shuffle regularized multiple divisor functions of level $N$, $g^{\tsha}:\ha^1\to\CC\llbracket q\rrbracket$. Then, we define shuffle regularized MES of level $N$ $G^{\tsha}:\ha^1\to\mathcal{O}(\HH)$ by
$$G^{\tsha}(w;\tau)\coloneq(\zeta^{\tsha}\star g^{\tsha})(w;q)$$
for $w\in\ha^1$, where $\zeta^{\tsha}:\ha^1\to\CC$ is the map of shuffle regularized MZVs of level $N$. 
It follows from the construction that $G^{\tsha}$ satisfy the shuffle product formula. We will prove that $G^{\tsha}=G$ on $\ha^2$, in other words, the regularized MES of level $N$ coincide with the original one in the convergent case. Then, considering the product of $G$'s yields the restricted double shuffle relations for the regularized MES of level $N$, which is the second main result of this paper.
\begin{thm}[Restricted double shuffle relation]\label{thm:main2}
For any words $w_1,w_2\in\ha^2$, we have
$$G(w_1\tas w_2;\tau)=G^{\tsha}(w_1\tsha w_2;\tau).$$
\end{thm}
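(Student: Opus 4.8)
My plan is to reduce Theorem~\ref{thm:main2} to the chain of equalities
$$G(w_1\tas w_2;\tau)=G(w_1;\tau)\,G(w_2;\tau)=G^{\tsha}(w_1;\tau)\,G^{\tsha}(w_2;\tau)=G^{\tsha}(w_1\tsha w_2;\tau),$$
valid for admissible $w_1,w_2\in\ha^2$, which rests on three facts: (a) $G$ is a homomorphism for the level-$N$ stuffle product $\tas$; (b) $G^{\tsha}$ is a homomorphism for the shuffle product $\tsha$; and (c) $G^{\tsha}$ restricts to $G$ on $\ha^2$. The two outer equalities are (a) and (b), the middle one is (c) applied to $w_1,w_2$. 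Observe that $w_1\tas w_2\in\ha^2$, since every letter occurring in a stuffle term of two words all of whose letters have upper index $\ge2$ again has upper index $\ge2$; by contrast $w_1\tsha w_2\in\ha^1$ typically involves non-admissible words (upper index $1$), which is exactly why the regularized object $G^{\tsha}$ is forced upon us on the right-hand side.

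I would obtain (b) formally. By definition $G^{\tsha}=\zeta^{\tsha}\star g^{\tsha}=m\circ(\zeta^{\tsha}\otimes g^{\tsha})\circ\Delta_\mu$, where $\zeta^{\tsha}\colon(\ha^1,\tsha)\to\CC$ and Kitada's $g^{\tsha}\colon(\ha^1,\tsha)\to\CC\llbracket q\rrbracket$ are, by construction, algebra homomorphisms for the shuffle product. Since $(\ha^1,\tsha,\Delta_\mu)$ is the Hopf algebra transported from $\widetilde{\mathcal{I}^1}$, the coproduct is an algebra map, $\Delta_\mu(u\tsha v)=\Delta_\mu(u)\tsha\Delta_\mu(v)$. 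The standard fact that in a bialgebra the convolution of two algebra homomorphisms into a commutative algebra is again an algebra homomorphism then applies verbatim: expanding $(\zeta^{\tsha}\star g^{\tsha})(w_1\tsha w_2)$ through $\Delta_\mu(w_1\tsha w_2)$, using that $\zeta^{\tsha},g^{\tsha}$ are multiplicative and that $\CC\llbracket q\rrbracket$ is commutative, regroups the terms into $G^{\tsha}(w_1;\tau)\,G^{\tsha}(w_2;\tau)$.

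For (c) I would compare Theorem~\ref{thm:main1}, which gives $G=\zeta\star g$ on $\ha^2$, with the definition $G^{\tsha}=\zeta^{\tsha}\star g^{\tsha}$. Both are assembled from the same coproduct $\Delta_\mu$, so it suffices to show that for each $w\in\ha^2$ every tensor $w'\otimes w''$ in $\Delta_\mu(w)$ has $w'$ admissible for $\zeta$ (so that $\zeta^{\tsha}(w')=\zeta(w')$) and $w''$ in the locus where $g^{\tsha}=g$. This is an admissibility-preservation property of the Goncharov coproduct, already visible in \eqref{eq:cop}, where every left factor ends in an index $\ge2$ and every right factor is admissible; granting it, the two convolutions agree termwise on $\ha^2$. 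I would establish (a) directly from the defining nested lattice sum: multiplying $G(w_1;\tau)$ and $G(w_2;\tau)$ and re-sorting the two chains $0\prec\lambda_1\prec\cdots$ and $0\prec\mu_1\prec\cdots$ by the total order $\prec$ yields exactly the level-$N$ stuffle. Interleavings with all points distinct reproduce the pure shuffle summands of $\tas$, while a collision $\lambda_i=\mu_j$—possible only when the residues agree, since equal lattice points force $m_i=m_j$ hence $a_i=b_j$—contributes the contracted letter $\binom{n_i+m_j}{a_i}$.

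The main obstacle is the analytic justification of (a). Because $G$ is defined by the iterated limit $\lim_{L\to\infty}\lim_{M\to\infty}$ over the truncated lattice ordered by $\prec$, one must prove that the product of two such regularized limits equals the limit of the merged sum, i.e.\ that forming the product commutes with both truncation limits; controlling the diagonal and boundary contributions compatibly with $\prec$ and with the congruences modulo $N$ is the delicate step, and it is here—rather than in the formal parts (b) and (c)—that the real work lies. A secondary, purely combinatorial task is the admissibility-preservation lemma for $\Delta_\mu$ required by (c).
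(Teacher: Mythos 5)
Your proposal is correct and follows essentially the same route as the paper, whose entire proof is the one-line remark that the identity ``follows by expanding $G(w_1;\tau)G(w_2;\tau)$ in $\tas$-product and $\tsha$-product'' --- i.e.\ exactly your chain $G(w_1\tas w_2)=G(w_1)G(w_2)=G^{\tsha}(w_1)G^{\tsha}(w_2)=G^{\tsha}(w_1\tsha w_2)$, with your ingredient (c) being the paper's Proposition that $G^{\tsha}=G$ on $\ha^2$. You have merely made explicit the supporting facts (the harmonic-product expansion of the lattice sum, the convolution argument for $\tsha$-multiplicativity, and the agreement of the regularization on admissible words) that the paper leaves implicit.
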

We also give other linear relations among MES of level $N$. Kitada obtained distribution relations and sum and weighted sum formulas for double Eisenstein series of level $N$. We give the distribution relations for MES of level $N$.

The organization of this paper is as follows. In \Cref{sec:MZV}, we recall multiple $L$-values (MLVs) introduced by Goncharov \cite{Gon98} and Arakawa--Kaneko \cite{AK} and MZVs of level $N$ introduced by Yuan--Zhao \cite{YZ16}. Note that their shuffle and harmonic product are different in the algebra of indices $\ha$. We give the relationships between their shuffle and harmonic product. In \Cref{sec:MES}, we give an explicit formula of the Fourier expansion of MES of level $N$ in the same way as Bachmann--Tasaka \cite{BT}. In \Cref{sec:FMZV}, we study formal iterated integrals corresponding to MZVs of level $N$. The main point is to study the Goncharov coproduct on these iterated integrals and compare them to the Fourier expansion of MES of level $N$. At the end of \Cref{sec:FMZV}, we observe explicit coincidence between the Fourier expansion of MES of level $N$ and the Goncharov coproduct. Using this, we construct shuffle regularized MES of level $N$ in \Cref{sec:sha}. In \Cref{sec:rel}, we give some linear relations among regularized MES of level $N$.

\section*{Acknowledgment}

The author would like to express his deepest gratitude to his supervisor, Professor Yasuo Ohno of Tohoku University. He taught the author attitude to study mathematics and gave a great deal of advice. The author also would like to express his deepest gratitude to Professor Koji Tasaka of Kindai University for comments, corrections, and ideas on this paper. In particular, he gave the author very helpful advice for the argument in \Cref{subsection:FMZV}, and introduced the author thesis \cite{Kitada23} of his former student Toi Kitada. The author also would like to thank the members of Ohno Laboratory for their kindness on both academically and as friends.

\section{Multiple zeta values of level $N$}\label{sec:MZV}

Yuan--Zhao \cite{YZ16} introduced MZVs of level $N$ and constructed their regularizations via multiple $L$-values. In this section, we reconstruct their results in terms of the ``word algebra'', introduced by Arakawa--Kaneko \cite{AK}.

\subsection{Multiple $L$-values of level $N$}

Let $x,y_a$, $a\in\ZZ/N\ZZ$, be letters and denote $\ha$ as a non-commutative polynomial ring generated by $x,y_a$ and let $\ha^0$ be a $\QQ$-subalgebra of $\ha$ defined by
\begin{align}
\ha&\coloneq\QQ\left\langle x,y_a\relmid a\in\ZZ/N\ZZ\right\rangle,\\
\ha^0&\coloneq\QQ+\sum_{\substack{a\in\ZZ/N\ZZ\\a\ne0}}\QQ y_a+\sum_{a\in\ZZ/N\ZZ}y_a\ha x+\sum_{\substack{a,b\in\ZZ/N\ZZ\\b\ne0}}y_a\ha y_b.
\end{align}
We can identify $\ha^0$ with the algebra of double indices by sending $z_{n,a}=y_ax^{n-1}$ to the double index $\binom{n}{a}$:
$$\ha^0\simeq\QQ+\sum_{\substack{n\ge1,a\in\ZZ/N\ZZ\\(n,a)\ne(1,0)}}\ha^1\binom{n}{a}.$$
Hereinafter, we identify a word $z_{n,a}$ and a double index $\binom{n}{a}$.

\begin{dfn}[Goncharov \cite{Gon98}, Arakawa--Kaneko \cite{AK}]\label{dfn:L}
Define the multiple $L$-value (MLV) of shuffle and harmonic type $L_\sha,L_\ast:\ha^0\to\CC$ by
\begin{gather}
L_{\sha}\binom{n_1,\dots,n_r}{a_1,\dots,a_r}\coloneq\int_0^1\frac{\eta^{a_1}dt}{1-\eta^{a_1}t}\left(\frac{dt}{t}\right)^{n_1-1}\cdots\frac{\eta^{a_r}dt}{1-\eta^{a_r}t}\left(\frac{dt}{t}\right)^{n_r-1},\\
L_\ast\binom{n_1,\dots,n_r}{a_1,\dots,a_r}\coloneq\sum_{0<k_1<\cdots<k_r}\frac{\eta^{a_1k_1+\cdots+a_rk_r}}{k_1^{n_1}\cdots k_r^{n_r}}
\end{gather}
for any word $\binom{n_1,\dots,n_r}{a_1,\dots,a_r}\in\ha^0$, and $L_\sha(\emptyset)=L_\ast(\emptyset)=1$, together with $\QQ$-linearity.
\end{dfn}

MLVs of shuffle type also have a series expression:
$$L_{\sha}\binom{n_1,\dots,n_r}{a_1,\dots,a_r}=\sum_{0<k_1<\cdots<k_r}\frac{\eta^{(a_1-a_2)k_1+\cdots(a_{r-1}-a_r)k_{r-1}+a_rk_r}}{k_1^{n_1}\cdots k_r^{n_r}}.$$
Define shuffle product $\sha$ on $\ha$ recursively by
\begin{enumerate}
\renewcommand{\labelenumi}{(S\arabic{enumi})}
\item$w\sha1=1\sha w=w,$
\item$u_1w_1\sha u_2w_2=u_1(w_1\sha u_2w_2)+u_2(u_1w_1\sha w_2)$
\end{enumerate}
for any $u_1,u_2\in\{x,y_a\mid a\in\ZZ/N\ZZ\}$ and any words $w,w_1,w_2\in\ha$, together with $\QQ$-bilinearity.
Note that $\ha^1$ is generated by $z_{n,a}=y_ax^{n-1}$ $(n>0,a\in\ZZ/N\ZZ)$.
We also define the harmonic product $\ast$ on $\ha^1$ recursively by
\begin{enumerate}
\renewcommand{\labelenumi}{(H\arabic{enumi})}
\item$w\ast1=1\ast w=w,$
\item$z_{n_1,a_1}w_1\ast z_{n_2,a_2}w_2=z_{n_1,a_1}(w_1\ast z_{n_2,a_2}w_2)+z_{n_2,a_2}(z_{n_1,a_1}w_1\ast w_2)+z_{n_1+n_2,a_1+a_2}(w_1\ast w_2),$
\end{enumerate}
for any $z_{n_1,a_1},z_{n_2,a_2}$ and any words $w_1,w_2\in\ha^1$, together with $\QQ$-bilinearity.
It is known that $L_\sha$ and $L_\ast$ are algebra homomorphisms i.e.
$$L_\#(w_1\#w_2)=L_\#(w_1)L_\#(w_2)\quad(w_1,w_2\in\ha^0,\#\in\{\sha,\ast\}).$$

\begin{rem}
There exist regularized MLVs, which are defined as polynomials, $L_\#^\mathrm{reg}:(\ha^1,\#)\to\CC[T]$ and they are algebra homomorphisms (see Arakawa--Kaneko \cite{AK}).
\end{rem}

\subsection{Multiple zeta values of level N}\label{subsec:MZV}

\begin{dfn}[Yuan--Zhao \cite{YZ16}]
Define the multiple zeta value (MZV) of level $N$ $\zeta:\widetilde{\ha^0}\to\RR$ by
$$\zeta\binom{n_1,\dots,n_r}{a_1,\dots,a_r}\coloneq\sum_{\substack{0<k_1<\dots<k_r\\{}^\forall i,k_i\equiv a_i\hspace{-8pt}\pmod{N}}}\frac{1}{k_1^{n_1}\cdots k_r^{n_r}}$$
for integers $n_1,\dots,n_{r-1}\ge1$, $n_r\ge2$ and $a_1,\dots,a_r\in\ZZ/N\ZZ$, together with $\QQ$-linearity and $\zeta(\emptyset)=1$.
\end{dfn}

MZVs of level $N$ have an iterated integral representation:
$$\zeta\binom{n_1,\dots,n_r}{a_1,\dots,a_r}=\int_0^1\frac{t^{a_1-1}dt}{1-t^N}\left(\frac{dt}{t}\right)^{n_1-1}\frac{t^{a_2-a_1-1}dt}{1-t^N}\left(\frac{dt}{t}\right)^{n_2-1}\cdots\frac{t^{a_r-a_{r-1}-1}dt}{1-t^N}\left(\frac{dt}{t}\right)^{n_r-1}.$$
Define a $\QQ$-linear bijection $\rho:\ha^1\to\ha^1$ and a $\QQ(\eta)$-linear bijection $\pi:\ha^1\otimes_\QQ\QQ(\eta)\to\ha^1\otimes_\QQ\QQ(\eta)$ by
\begin{gather}
\rho(z_{n_1,a_1}\cdots z_{n_r,a_r})=z_{n_1,a_1}z_{n_2,a_2-a_1}\cdots z_{n_r,a_r-a_{r-1}},\\
\pi(z_{n,a})=N^{-1}\sum_{b\in\ZZ/N\ZZ}\eta^{-ab}z_{n,b},
\end{gather}
Define two products $\tsha,\tas:\ha^1\times\ha^1\to\ha^1$ by
\begin{gather}
w_1\tsha w_2\coloneq\rho^{-1}\left(\rho(w_1)\sha\rho(w_2)\right),\\
w_1\tas w_2\coloneq\pi^{-1}\left(\pi(w_1)\ast\pi(w_2)\right).
\end{gather}

MZVs of level $N$ can be written as $\QQ(\eta)$-linear combination of MLVs by using the linear maps $\rho$ and $\pi$.

\begin{prop}[Yuan--Zhao \cite{YZ16}]\label{prop:zetaL}
For $w\in\widetilde{\ha^0}$, we have
$$\zeta(w)=(L_\sha\circ\pi\circ\rho)(w)=(L_\ast\circ\pi)(w).$$
\end{prop}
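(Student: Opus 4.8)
The plan is to reduce both equalities to orthogonality of the characters of $\ZZ/N\ZZ$: since $\eta=\exp(2\pi\sqrt{-1}/N)$ is a primitive $N$-th root of unity,
$$\frac{1}{N}\sum_{b\in\ZZ/N\ZZ}\eta^{bm}=\begin{cases}1&m\equiv0\pmod N,\\0&\text{otherwise}.\end{cases}$$
By $\QQ$-linearity of all the maps involved it suffices to prove both identities for a single admissible word $w=\binom{n_1,\dots,n_r}{a_1,\dots,a_r}\in\widetilde{\ha^0}$; then $n_r\ge2$, so all the nested series below converge absolutely, which is what legitimizes interchanging the finite Fourier sum produced by $\pi$ with the infinite sums defining $L_\ast$ and $L_\sha$. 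Here $\pi$ is applied letter by letter, so that
$$\pi\binom{n_1,\dots,n_r}{a_1,\dots,a_r}=N^{-r}\sum_{b_1,\dots,b_r\in\ZZ/N\ZZ}\eta^{-(a_1b_1+\cdots+a_rb_r)}\binom{n_1,\dots,n_r}{b_1,\dots,b_r}.$$

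For the harmonic equality $\zeta(w)=(L_\ast\circ\pi)(w)$, I would substitute this expansion of $\pi(w)$ and insert the series representation of $L_\ast$. Interchanging the finite sum over $(b_1,\dots,b_r)$ with the sum over $0<k_1<\cdots<k_r$, the exponent of $\eta$ becomes $\sum_i b_i(k_i-a_i)$, which factors over $i$. Orthogonality applied to each factor forces $k_i\equiv a_i\pmod N$ for all $i$ and produces a factor $N^r$ cancelling $N^{-r}$, leaving exactly the defining series of $\zeta(w)$.

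For the shuffle equality $\zeta(w)=(L_\sha\circ\pi\circ\rho)(w)$, I first use that $\rho$ replaces the indices by consecutive differences, $\rho(w)=\binom{n_1,\dots,n_r}{c_1,\dots,c_r}$ with $c_1=a_1$ and $c_i=a_i-a_{i-1}$ for $i\ge2$. Expanding $\pi(\rho(w))$ and inserting the series representation of $L_\sha$, I rewrite its exponent $(b_1-b_2)k_1+\cdots+b_rk_r$ by collecting the coefficient of each $b_i$, obtaining $\sum_i b_i(k_i-k_{i-1})$ with the convention $k_0=0$. After interchanging sums, orthogonality forces $k_i-k_{i-1}\equiv c_i\pmod N$ for every $i$; since $c_i=a_i-a_{i-1}$ and $k_0=a_0=0$, these congruences telescope to $k_i\equiv a_i\pmod N$, once more recovering the series for $\zeta(w)$.

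I do not anticipate a real obstacle: both identities are the same discrete Fourier inversion, and the only delicate points are confirming the absolute convergence that justifies the interchange of the finite and infinite sums (ensured by $n_r\ge2$) and the bookkeeping that matches the difference structure introduced by $\rho$ against the difference structure already present in the series for $L_\sha$. A slightly slicker variant for the shuffle case is to first check $L_\sha\binom{n_1,\dots,n_r}{a_1,\dots,a_r}=L_\ast\binom{n_1,\dots,n_r}{a_1-a_2,\dots,a_{r-1}-a_r,a_r}$ directly from the two series and then deduce the first equality from the harmonic one, but carrying out both orthogonality computations in parallel is equally transparent.
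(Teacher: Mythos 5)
Your argument is correct. The paper itself offers no proof of this proposition (it is quoted from Yuan--Zhao), and your computation --- applying $\pi$ letter by letter, interchanging the finite sum over $(b_1,\dots,b_r)$ with the absolutely convergent series for $L_\ast$ resp.\ $L_\sha$ (legitimate since $n_r\ge2$), invoking orthogonality of the characters of $\ZZ/N\ZZ$, and telescoping the congruences $k_i-k_{i-1}\equiv a_i-a_{i-1}\pmod N$ to recover $k_i\equiv a_i\pmod N$ in the shuffle case --- is exactly the standard discrete Fourier inversion by which this identity is established.
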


It is known that MZVs of level $N$ satisfy the harmonic and shuffle products (see Yuan--Zhao \cite{YZ16}), not in the sence of the word algebra. Now, we rephrase these facts via the word algebra.
\begin{prop}[cf. Yuan--Zhao \cite{YZ16}]
$\zeta:(\widetilde{\ha^0},\tsha)\to\RR$ is an algebra homomorphism.
\end{prop}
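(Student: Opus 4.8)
The plan is to reduce the statement to a single multiplicativity property of the map $\pi$, exploiting the factorization of $\zeta$ through the shuffle MLV provided by \Cref{prop:zetaL}. Fix $w_1,w_2\in\widetilde{\ha^0}$. Directly from $w_1\tsha w_2=\rho^{-1}(\rho(w_1)\sha\rho(w_2))$ one has $\rho(w_1\tsha w_2)=\rho(w_1)\sha\rho(w_2)$, so \Cref{prop:zetaL} gives
\[
\zeta(w_1\tsha w_2)=(L_\sha\circ\pi)\big(\rho(w_1)\sha\rho(w_2)\big).
\]
If $\pi$ were multiplicative for the shuffle product, i.e.\ $\pi(u\sha v)=\pi(u)\sha\pi(v)$, then since $L_\sha$ is an algebra homomorphism for $\sha$ (extended $\QQ(\eta)$-linearly) the right-hand side would equal $L_\sha(\pi(\rho(w_1)))\,L_\sha(\pi(\rho(w_2)))$, which is $\zeta(w_1)\zeta(w_2)$ by \Cref{prop:zetaL} again. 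Thus everything reduces to showing that $\pi$ respects $\sha$.

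To prove this I would first recognize $\pi$ as a letterwise substitution. Define a $\QQ(\eta)$-linear endomorphism on the alphabet by $x\mapsto x$ and $y_a\mapsto N^{-1}\sum_{b\in\ZZ/N\ZZ}\eta^{-ab}y_b$, and extend it as a concatenation homomorphism. Since $z_{n,a}=y_ax^{n-1}$, this map sends $z_{n,a}$ to $N^{-1}\sum_b\eta^{-ab}z_{n,b}$, so it agrees with $\pi$ on the generators $z_{n,a}$; as both are concatenation homomorphisms, they coincide on all of $\ha^1\otimes_\QQ\QQ(\eta)$. Because $x$ is fixed and each $y_a$ is sent to a combination of $y$-letters, $\pi$ maps words beginning with a $y$-letter to such words, so $\pi$ preserves $\ha^1$; note also that $\rho(w_1)\sha\rho(w_2)$ lies in $\ha^1$, since a shuffle of two words beginning with $y$-letters again begins with a $y$-letter.

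The only substantial point is the standard fact that a linear substitution applied letter by letter commutes with the shuffle product. I would prove $\pi(u\sha v)=\pi(u)\sha\pi(v)$ by induction on $|u|+|v|$ using (S1)--(S2): writing $u=u_1w_1$ and $v=u_2w_2$ with $u_1,u_2$ letters, apply $\pi$ to $u_1(w_1\sha v)+u_2(u\sha w_2)$, use that $\pi$ is a concatenation homomorphism together with the induction hypothesis on the two tail shuffles, and then re-expand $\pi(u)\sha\pi(v)$ via (S2); the two expressions agree after collecting terms. The only bookkeeping care needed is that $\pi(u_1)$ and $\pi(u_2)$ are $\QQ(\eta)$-linear combinations of letters rather than single letters, so one applies (S2) to each pair of letters and uses bilinearity of $\sha$ to recombine — this is why the argument is carried out in $\ha\otimes_\QQ\QQ(\eta)$ with $L_\sha$ and its homomorphism property extended $\QQ(\eta)$-linearly. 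Finally I would record, for well-posedness, that $\widetilde{\ha^0}$ is closed under $\tsha$: the map $\rho$ preserves weights and the admissibility condition $n_r\ge2$ (equivalently, ending in the letter $x$), and a shuffle of two words ending in $x$ again ends in $x$, so $\rho(w_1)\sha\rho(w_2)\in\widetilde{\ha^0}$ and hence $w_1\tsha w_2\in\widetilde{\ha^0}$. Combining the reduction with the multiplicativity of $\pi$ yields $\zeta(w_1\tsha w_2)=\zeta(w_1)\zeta(w_2)$.
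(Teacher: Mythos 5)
Your proof is correct, but it takes a different route from the paper's. The paper argues directly from the iterated integral representation of $\zeta$ at level $N$: setting $y_a=\frac{t^{a-1}dt}{1-t^N}$ and $x=\frac{dt}{t}$, one has $\zeta(w)=\int_0^1\rho(w)$, and the general shuffle property of iterated integrals immediately gives $\int_0^1\rho(w_1)\sha\rho(w_2)=\bigl(\int_0^1\rho(w_1)\bigr)\bigl(\int_0^1\rho(w_2)\bigr)$; the whole proof is three lines. You instead factor $\zeta$ through $L_\sha$ via \Cref{prop:zetaL} and reduce everything to the purely combinatorial claim that $\pi$, viewed as the letterwise substitution $x\mapsto x$, $y_a\mapsto N^{-1}\sum_b\eta^{-ab}y_b$, commutes with $\sha$ — a standard fact you prove by the usual induction on length. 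The two arguments are morally dual: the paper's integral representation with the forms $\frac{t^{a-1}dt}{1-t^N}$ already encodes the partial-fraction identity that underlies $\pi$, so the analytic shuffle property does for the paper what your algebraic lemma on $\pi$ does for you. Your version is longer but has the merit of isolating the $\sha$-multiplicativity of $\pi$ as an explicit algebraic statement (useful elsewhere, e.g.\ it is the shuffle analogue of what the paper proves by hand for $\tas$ in the next proposition), and of carefully recording the closure facts (that $\widetilde{\ha^0}$ is stable under $\tsha$ and that all intermediate words stay in $\ha^0$ where $L_\sha$ is known to be a homomorphism), which the paper leaves implicit. Both proofs are complete and correct.
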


\begin{proof}Let $y_a=\frac{t^{a-1}dt}{1-t^N}$ and $x=\frac{dt}{t}$. By the integral representation of MZV, it holds
$$\zeta(w)=\int_0^1\rho(w)$$
for any word $w\in\widetilde{\ha}^0$. For any words $w_1,w_2\in\widetilde{\ha}^0$, we have
\begin{align}
\zeta(w_1\tsha w_2)&=\int_0^1\rho(w_1\tsha w_2)=\int_0^1\rho(w_1)\sha\rho(w_2)\\
&=\left(\int_0^1\rho(w_1)\right)\left(\int_0^1\rho(w_2)\right)=\zeta(w_1)\zeta(w_2).
\end{align}
\end{proof}

\begin{prop}[cf. Yuan--Zhao \cite{YZ16}]
The $\tas$-product is well-defined and $\zeta:(\widetilde{\ha^0},\tas)\to\RR$ is an algebra homomorphism.
\end{prop}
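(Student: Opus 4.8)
The plan is to separate the statement into two independent tasks: (i) show that $\tas$ carries pairs of words in $\ha^1$ to $\QQ$-rational combinations of words (so that it is a genuine product $\ha^1\times\ha^1\to\ha^1$ preserving $\widetilde{\ha^0}$), and (ii) deduce the homomorphism property from \Cref{prop:zetaL} together with the fact that $L_\ast$ respects $\ast$. The real content is (i); once it is in hand, (ii) is a one-line computation.

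For (i) I would introduce the \emph{restricted harmonic product} $\star'$ on $\ha^1$, defined $\QQ$-bilinearly by $w\star'1=1\star'w=w$ and
$$z_{n_1,a_1}w_1\star'z_{n_2,a_2}w_2=z_{n_1,a_1}(w_1\star'z_{n_2,a_2}w_2)+z_{n_2,a_2}(z_{n_1,a_1}w_1\star'w_2)+\delta_{a_1,a_2}\,z_{n_1+n_2,a_1}(w_1\star'w_2),$$
where the contraction term survives only when the residues coincide. This product manifestly has integer coefficients and stays inside $\ha^1$, so it suffices to prove $w_1\tas w_2=w_1\star'w_2$. Since $\pi$ is injective, this is equivalent to the intertwining identity $\pi(w_1\star'w_2)=\pi(w_1)\ast\pi(w_2)$, which I would prove by induction on the total depth. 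Here I use that $\pi$ is the unique extension of the given rule on letters to an algebra homomorphism for concatenation, so $\pi(z_{n,a}w)=\pi(z_{n,a})\pi(w)$. The base cases are the empty word and a single pair of letters; for the latter one checks directly that applying $\pi^{-1}$ to $\pi(z_{n_1,a_1})\ast\pi(z_{n_2,a_2})$ recovers exactly the three terms above, the character orthogonality $N^{-1}\sum_{b}\eta^{b(c-a)}=\delta_{a,c}$ being what produces the Kronecker delta in the contraction term.

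For the inductive step I would write $w_1=z_{n_1,a_1}u$, $w_2=z_{n_2,a_2}v$. Applying $\pi$ to the $\star'$-recursion and invoking the induction hypothesis on the three shorter products yields the left-hand side; expanding $\pi(w_1)\ast\pi(w_2)=(\pi(z_{n_1,a_1})\pi(u))\ast(\pi(z_{n_2,a_2})\pi(v))$ by bilinearity and the recursion (H2) yields the right-hand side. The point requiring care is that $\pi(z_{n_i,a_i})$ is a $\QQ(\eta)$-combination of letters rather than a single letter, so (H2) must be applied termwise and then resummed. The two expansions match because of the identity $\pi(z_{n_1,a_1}\diamond_\pi z_{n_2,a_2})=\pi(z_{n_1,a_1})\diamond\pi(z_{n_2,a_2})$, where $\diamond_\pi$ denotes the residue-restricted contraction $z_{n_1,a_1}\diamond_\pi z_{n_2,a_2}=\delta_{a_1,a_2}z_{n_1+n_2,a_1}$ of $\star'$ and $\diamond$ the index-additive contraction $z_{n,a}\diamond z_{m,b}=z_{n+m,a+b}$ appearing in (H2); this identity is again a consequence of character orthogonality. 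This is the standard change-of-alphabet phenomenon for quasi-shuffle algebras, with $\pi$ acting as a linear automorphism of the space of letters, and the main (though purely routine) obstacle is organizing the bilinear bookkeeping cleanly.

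Well-definedness on $\widetilde{\ha^0}$ then follows, because in the $\star'$-recursion the last letter of every monomial is either a last letter of $w_1$, a last letter of $w_2$, or a contraction $z_{n_1+n_2,a}$ of weight $\ge2$; hence admissibility is preserved and $\tas$ restricts to $\widetilde{\ha^0}\times\widetilde{\ha^0}\to\widetilde{\ha^0}$. Finally, for (ii), for any $w_1,w_2\in\widetilde{\ha^0}$, \Cref{prop:zetaL} gives $\zeta=L_\ast\circ\pi$, and since $L_\ast$ is an algebra homomorphism for $\ast$,
$$\zeta(w_1\tas w_2)=L_\ast\big(\pi(w_1\tas w_2)\big)=L_\ast\big(\pi(w_1)\ast\pi(w_2)\big)=L_\ast(\pi(w_1))\,L_\ast(\pi(w_2))=\zeta(w_1)\zeta(w_2),$$
using $\pi(w_1\tas w_2)=\pi(w_1)\ast\pi(w_2)$ from the definition of $\tas$. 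This completes the proof.
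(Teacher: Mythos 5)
Your proposal is correct and follows essentially the same route as the paper: the paper likewise establishes well-definedness by showing that $\tas$ obeys the integer-coefficient quasi-shuffle recursion with contraction term $\delta_{a_1,a_2}z_{n_1+n_2,a_1}$, proved by induction on total length via the character-orthogonality computation for $\pi$, after which the homomorphism property is the same one-line consequence of $\zeta=L_\ast\circ\pi$ and the $\ast$-homomorphy of $L_\ast$. (Your contraction term $z_{n_1+n_2,a_1}$, rather than the $z_{n_1+n_2,a_1+a_2}$ printed in the paper's recursion (T2), is exactly what the paper's own computation yields, and your explicit check that admissibility is preserved is a point the paper leaves tacit.)
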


\begin{proof}
It suffices to show the $\tas$-product is determined recursively by
\begin{enumerate}
\renewcommand{\labelenumi}{(T\arabic{enumi})}
\item$w\tas1=1\tas w=w,$
\item$z_{n_1,a_1}w_1\tas z_{n_2,a_2}w_2=z_{n_1,a_1}(w_1\tas z_{n_2,a_2}w_2)+z_{n_2,a_2}(z_{n_1,a_1}w_1\tas w_2)+\delta_{a_1,a_2}z_{n_1+n_2,a_1+a_2}(w_1\tas w_2)$
\end{enumerate}
for any $z_{n_1,a_1},z_{n_2,a_2}$ and any words $w_1,w_2$.
(T1) is clear by definition. We prove (T2) by induction on $l(w_1)+l(w_2)$. Here, $l(w)$ denote the length of the word $w$. When $l(w_1)+l(w_2)=0$, we have
\begin{align}
z_{n_1,a_1}\tas z_{n_2,a_2}&=\pi^{-1}\bigg(N^{-2}\sum_{b_1,b_2\in\ZZ/N\ZZ}\eta^{-a_1b_1-a_2b_2}(z_{n_1,b_1}z_{n_2,b_2}+z_{n_2,b_2}z_{n_1,b_1}+z_{n_1+n_2,b_1+b_2})\bigg)\\
&=z_{n_1,a_1}z_{n_2,a_2}+z_{n_2,a_2}z_{n_1,a_1}+\pi^{-1}\bigg(N^{-2}\sum_{b_1,b_2\in\ZZ/N\ZZ}\eta^{-a_1b_1-a_2b_2}z_{n_1+n_2,b_1+b_2}\bigg).
\end{align}
In the sum of the third term, replacing $b_1^\prime=b_1+b_2$, we have
\begin{align}
\pi^{-1}\bigg(N^{-2}\sum_{b_1,b_2\in\ZZ/N\ZZ}\eta^{-a_1b_1-a_2b_2}z_{n_1+n_2,b_1+b_2}\bigg)&=\pi^{-1}\bigg(N^{-2}\sum_{b_2\in\ZZ/N\ZZ}\eta^{(a_1-a_2)b_2}\sum_{b_1^\prime\in\ZZ/N\ZZ}\eta^{-a_1b_1^\prime}z_{n_1+n_2,b_1^\prime}\bigg)\\
&=\pi^{-1}\bigg(\delta_{a_1,a_2}N^{-1}\sum_{b_1^\prime\in\ZZ/N\ZZ}\eta^{-a_1b_1^\prime}z_{n_1+n_2,b_1^\prime}\bigg)=\delta_{a_1,a_2}z_{n_1+n_2,a_1}.
\end{align}
When $l(w_1)+l(w_2)>0$, we put $w_1=z_{n_2,a_2}\cdots z_{n_r,a_r}$ and $w_2=z_{n_{r+2},a_{r+2}}\cdots z_{n_{r+s},a_{r+s}}$. Using inductive hypothesis, we have
\begin{align}
z_{n_1,a_1}w_1\tas z_{n_{r+1},a_{r+1}}w_2&=\pi^{-1}\bigg(N^{-(r+s)}\sum_{b_1,\dots,b_{r+s}}\eta^{-\bm{a}\cdot\bm{b}}z_{n_1,b_1}\cdots z_{n_r,b_r}\ast z_{n_{r+1},b_{r+1}}\cdots z_{n_{r+s},b_{r+s}}\bigg)\\
&=z_{n_1,a_1}(w_1\tas z_{n_{r+1},a_{r+1}}w_2)+z_{n_{r+1},a_{r+1}}(z_{n_1,a_1}w_1\tas w_2)\\
&+\pi^{-1}\bigg(N^{-2}\sum_{b_1,b_{r+1}\in\ZZ/N\ZZ}\eta^{-a_1b_1-a_{r+1}b_{r+1}}z_{n_1+n_{r+1},b_1+b_{r+1}}\bigg)w_1\tas w_2,
\end{align}
where $\bm{a}=(a_1,\dots,a_r)$ and $\bm{b}={}^t(b_1,\dots,b_r)$. As mentioned above, the third term is $\delta_{a_1,a_2}w_1\tas w_2$.
\end{proof}

\begin{rem}\label{rem:MEShar}
By definition, MES of level $N$ also satisfy the harmonic product, i.e. $G:(\widetilde{\ha^2},\tas)\to\mathcal{O}(\HH)$ is an algebra homomorphism.
\end{rem}

Yuan--Zhao \cite{YZ16} defined regularized MZVs of level $N$ as polynomials in $T$ by using regularized MLVs and \Cref{prop:zetaL}. In this paper, we define the regularized MZVs as the constant terms of the Yuan--Zhao's regularized MZVs.

\begin{dfn}[Yuan--Zhao \cite{YZ16}]
Define the regularized multiple zeta values of level $N$ as the images of $\zeta^{\tsha}:\ha^1\to\CC$ and $\zeta^{\tas}:\ha^1\to\CC$, by
\begin{gather}
\zeta^{\tsha}(w)\coloneq(L^\mathrm{reg}_\sha\circ\pi\circ\rho)(w)|_{T=0},\\
\zeta^{\tas}(w)\coloneq(L^\mathrm{reg}_\ast\circ\pi)(w)|_{T=0}.
\end{gather}
\end{dfn}

By definition, regularized MZV of level $N$, $\zeta^{\tsha}$ and $\zeta^{\tas}$ satisfy the $\tsha$ and $\tas$-product, respectively.

Now, we give the antipode relations for MLVs and MZVs of level $N$. We can extend $L_\sha$ on $\ha$ with $\sha$-homomorphy by putting $L_\sha(x)=L_\sha(y_0)=0$. Hoffman (\cite{Hoff00}) provided a Hopf algebra structure on the quasi-shuffle algebra.

\begin{thm}[Hoffman {\cite[Theorem 3.2]{Hoff00}}]
$(\ha_\sha,\Delta_H,\eps_H,\mathcal{S})$ is a Hopf algebra with
\begin{align}
\Delta_H(w)=\sum_{uv=w}u\otimes v,\quad\eps_H(w)=\begin{cases}1&w=1\\0&w\ne1\end{cases},\quad\mathcal{S}(w)=(-1)^{\wt(w)}\overset{\leftarrow}{w},
\end{align}
where $\overset{\leftarrow}{w}=a_n\cdots a_1$ for $w=a_1\cdots a_n$.
\end{thm}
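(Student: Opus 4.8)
The plan is to verify, in turn, the four ingredients of a Hopf algebra: the algebra axioms, the coalgebra axioms, their bialgebra compatibility, and the antipode. The commutative, associative, unital algebra structure of $(\ha,\sha)$ follows from the recursion (S1)--(S2) with the empty word $1$ as unit, so I would take it as given. For the coalgebra structure I would check coassociativity and the counit law directly from the deconcatenation formula: both iterated coproducts $(\Delta_H\otimes\mathrm{id})\circ\Delta_H$ and $(\mathrm{id}\otimes\Delta_H)\circ\Delta_H$ send $w$ to the same triple deconcatenation $\sum_{u_1u_2u_3=w}u_1\otimes u_2\otimes u_3$, while $(\eps_H\otimes\mathrm{id})\circ\Delta_H(w)=(\mathrm{id}\otimes\eps_H)\circ\Delta_H(w)=w$ because $\eps_H$ kills every deconcatenation factor except the empty one.

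The central step is the bialgebra compatibility, namely that $\Delta_H\colon(\ha,\sha)\to(\ha\otimes\ha,\sha\otimes\sha)$ and $\eps_H$ are algebra homomorphisms. Multiplicativity of $\eps_H$ is immediate, since $w_1\sha w_2$ contains $1$ with nonzero coefficient only when $w_1=w_2=1$. For the coproduct I would establish
$$\Delta_H(w_1\sha w_2)=\Delta_H(w_1)\sha\Delta_H(w_2)$$
by a combinatorial bijection: specifying an interleaving of $w_1$ and $w_2$ together with a cut point is the same datum as choosing a prefix/suffix splitting $w_1=p_1q_1$, $w_2=p_2q_2$ and then interleaving $p_1,p_2$ to the left of the cut and $q_1,q_2$ to its right, and summing over all such data produces exactly $\sum_{p_1q_1=w_1}\sum_{p_2q_2=w_2}(p_1\sha p_2)\otimes(q_1\sha q_2)$. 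The same identity can alternatively be obtained by induction on $\wt(w_1)+\wt(w_2)$ from (S2). I expect this compatibility to be the main obstacle, since it is the only point where the two otherwise-independent structures must be matched up.

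Once the bialgebra is in place I would conclude via connectedness. Grading $\ha$ by $\wt$ (the number of letters) makes it a graded bialgebra with $\ha_0=\QQ\cdot1$, hence connected, and a connected graded bialgebra automatically possesses a unique two-sided antipode. By uniqueness of convolution inverses it then suffices to check that the stated map satisfies one antipode equation, say $\sum_{uv=w}\mathcal{S}(u)\sha v=\eps_H(w)\cdot1$. For $w=a_1\cdots a_n$ with $n\ge1$ this is precisely the identity
$$\sum_{k=0}^{n}(-1)^k\,(a_k\cdots a_1)\sha(a_{k+1}\cdots a_n)=0,$$
which I would prove by induction on $n$, peeling off the letter $a_1$ via (S2) so that the two resulting families of terms cancel in pairs. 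Since the case $w=1$ is trivial and the left antipode equation forces equality with the already-existing antipode, the theorem follows.
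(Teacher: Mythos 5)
Your proof is correct and complete; note, however, that the paper itself offers no proof of this statement --- it is quoted from Hoffman's paper as a known result, so there is no internal argument to compare against. Your route is the standard one and all four verifications go through: the shuffle algebra axioms, coassociativity and counitality of deconcatenation, the bialgebra compatibility via the prefix/suffix bijection (which is indeed the only substantive step), and the antipode via connectedness of the grading by $\wt$ plus uniqueness of convolution inverses. One point worth flagging about the relationship to the paper: the alternating identity
\begin{equation}
\sum_{k=0}^{n}(-1)^{k}\,(a_k\cdots a_1)\sha(a_{k+1}\cdots a_n)=0
\end{equation}
that you verify directly in order to confirm $\mathcal{S}$ is exactly the content of the paper's \Cref{lem:antipode}, which the author instead \emph{deduces from} this theorem by writing out $(\mathcal{S}\star 1)(w)=(u\circ\eps_H)(w)$. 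So your argument runs in the opposite direction and has the virtue of being self-contained: it proves both the theorem and \Cref{lem:antipode} at once without appealing to an external reference. A small imprecision in your sketch of that identity: the pairwise cancellation does not come from peeling off the letter $a_1$ specifically, but from applying (S2) to the two leading letters $a_k$ and $a_{k+1}$ of the $k$-th summand, after which the term beginning with $a_{k+1}$ produced by the $k$-th summand cancels the term beginning with $a_{k+1}$ produced by the $(k+1)$-st; this telescoping disposes of all terms including the boundary ones $k=0$ and $k=n$. With that wording fixed the argument is airtight.
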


It is known that MLVs of shuffle type satisfy the antipode relations.

\begin{prop}\label{prop:antiL}
For $n_1,\dots,n_r\ge1$ and $a_1,\dots,a_{r-1}\in\ZZ/N\ZZ$, we have
$$\sum_{q=1}^r\sum_{\substack{k_1+\cdots+k_r=n\\k_q=1}}(-1)^{m_q}\prod_{\substack{i=1\\i\ne q}}^r\binom{k_i-1}{n_i-1}L_\sha\binom{k_{q-1},\dots,k_1}{a_{q-1},\dots,a_1}L_\sha\binom{k_{q+1},\dots,k_r}{a_q,\dots,a_{r-1}}=0$$
where $m_q=k_1+\cdots+k_{q-1}+n_q$ and $n=n_1+\cdots+n_r$.
\end{prop}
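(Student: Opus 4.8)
The plan is to apply the antipode relation \Cref{lem:antipode} to the single word
$$w=x^{n_1-1}y_{a_1}x^{n_2-1}y_{a_2}\cdots x^{n_{r-1}-1}y_{a_{r-1}}x^{n_r-1}\in\ha$$
(which has length $n-1$ and whose letters I write as $w=a_1\cdots a_{n-1}$), and then to push the resulting identity forward along the shuffle homomorphism $L_\sha$ extended to $\ha$ by $L_\sha(x)=L_\sha(y_0)=0$. Since $L_\sha$ is multiplicative for $\sha$, the vanishing of \Cref{lem:antipode} becomes
$$\sum_{i=0}^{n-1}(-1)^iL_\sha(a_i\cdots a_1)\,L_\sha(a_{i+1}\cdots a_{n-1})=0 .$$
Because $L_\sha(x^m)=0$ for every $m\ge1$, a summand survives only when neither the reversed prefix $a_i\cdots a_1$ nor the suffix $a_{i+1}\cdots a_{n-1}$ is a positive power of $x$; equivalently, the cut must fall inside one of the $x$-runs of $w$, leaving at least one letter $y_{a_\bullet}$ on each nonempty side.

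I index a surviving cut by the run it meets. If the cut lies in the run $x^{n_q-1}$ between $y_{a_{q-1}}$ and $y_{a_q}$ and sends $j$ of its letters to the prefix $(0\le j\le n_q-1)$, then the reversed prefix equals $x^{j}z_{n_{q-1},a_{q-1}}\cdots z_{n_1,a_1}$ and the suffix equals $x^{n_q-1-j}z_{n_{q+1},a_q}\cdots z_{n_r,a_{r-1}}$, where $q$ runs over $1,\dots,r$ (for $q=1$ the prefix is empty and only $j=0$ occurs, for $q=r$ the suffix is empty and only $j=n_r-1$ occurs). Thus $q$ is exactly the pivot of the statement: the run $x^{n_q-1}$ is the one contributing no $z$-letter to either factor.

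The technical input is the shuffle-regularization expansion
$$L_\sha\bigl(x^{s}z_{m_1,b_1}\cdots z_{m_d,b_d}\bigr)=(-1)^{s}\sum_{\substack{e_1+\cdots+e_d=s\\ e_l\ge0}}\prod_{l=1}^{d}\binom{m_l-1+e_l}{e_l}L_\sha\bigl(z_{m_1+e_1,b_1}\cdots z_{m_d+e_d,b_d}\bigr),$$
which one proves by induction on $s$ from $x\,\sha\,(z_{m_1,b_1}\cdots z_{m_d,b_d})$ together with $L_\sha(x^s)=0$. Applying it to the two factors above and setting $k_l=n_l+e_l$ converts $\binom{m_l-1+e_l}{e_l}$ into $\binom{k_l-1}{n_l-1}$ and produces exactly the MLVs $L_\sha\binom{k_{q-1},\dots,k_1}{a_{q-1},\dots,a_1}$ and $L_\sha\binom{k_{q+1},\dots,k_r}{a_q,\dots,a_{r-1}}$ of the claim. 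The two leading blocks have lengths $j$ and $n_q-1-j$, imposing $\sum_{l<q}e_l=j$ and $\sum_{l>q}e_l=n_q-1-j$; summing over $j=0,\dots,n_q-1$ dissolves this split and leaves a single unconstrained sum over all $(e_l)_{l\ne q}$ with $\sum_{l\ne q}e_l=n_q-1$. In the variables $k_l$ this is precisely the sum $\sum_{k_1+\cdots+k_r=n,\,k_q=1}\prod_{l\ne q}\binom{k_l-1}{n_l-1}$.

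It remains to match signs. The antipode sign is $(-1)^i$ with $i=(n_1+\cdots+n_{q-1})+j=k_1+\cdots+k_{q-1}$, while the two expansions contribute $(-1)^{j}$ and $(-1)^{n_q-1-j}$; the total is $(-1)^{(k_1+\cdots+k_{q-1})+n_q-1}=-(-1)^{m_q}$. The overall factor $-1$ is irrelevant since the whole expression is zero, so the asserted identity follows. I expect the main obstacle to be precisely the content of the third and fourth paragraphs: proving the regularization-expansion identity with the right product of binomials, and verifying that the a priori cut-dependent contributions collapse to the single constrained sum of the statement — the key point being that the two leading blocks always have total length $n_q-1$ independently of $j$, which is what forces the pivot relations $k_q=1$ and $k_1+\cdots+k_r=n$.
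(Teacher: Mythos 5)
Your proposal is correct and follows essentially the same route as the paper: apply \Cref{lem:antipode} to the word $w=x^{n_1-1}y_{a_1}\cdots y_{a_{r-1}}x^{n_r-1}$, group the cuts by the $x$-run they fall in, and push the resulting identity forward by $L_\sha$. The only difference is that the paper compresses the last step into ``sending both sides by the map $L_\sha$,'' whereas you make explicit the regularization expansion of $L_\sha(x^s z_{m_1,b_1}\cdots z_{m_d,b_d})$ and the sign/index bookkeeping that this step requires — all of which checks out.
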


\begin{proof}
Considering the convolution of $\mathcal{S}$ and $1$, we have
$$\sum_{uv=w}(-1)^{\wt(u)}\overset{\leftarrow}{u}\sha v=m\circ(\mathcal{S}\otimes1)\circ\Delta_H(w)=(u\circ\eps_H)(w)=\begin{cases}1&w=1,\\0&w\ne1.\end{cases}$$
for any word $w$.
Then, by taking $w=x^{n_1-1}y_{a_1}x^{n_2-1}y_{a_2}\cdots x^{n_{r-1}-1}y_{a_{r-1}}x^{n_r-1}$, we have
\begin{align}
\sum_{q=1}^r\sum_{l_q=0}^{n_q-1}(-1)^{n_1+\cdots+n_{q-1}+l_q}&(x^{l_q}y_{a_{q-1}}x^{n_{q-1}-1}\cdots y_{a_1}x^{n_1-1}\\
&\sha x^{n_q-l_q-1}y_{a_q}x^{n_{q+1}-1}\cdots y_{a_{r-1}}x^{n_r-1})=0.
\end{align}
Applying to both sides the map $L_\sha$, we obtain the claim since one can show that for any $l\ge1$
$$L_\sha(x^ly_{a_1}x^{n1-1}\cdots y_{a_r}x^{n_r-1})=(-1)^l\sum_{k_1+\cdots+k_r=n_1+\cdots+n_r+l}\prod_{i=1}^r\binom{k_i-1}{n_i-1}L_\sha(k_1,\dots,k_r)$$
by induction and the definition of the shuffle product.
\end{proof}

Since MZV of level $N$ can be written via MLV, we obtain the antipode relations also for MZV of level $N$.

\begin{cor}[Antipode relation for MZV of level $N$]\label{cor:antizeta}
For $n_1,\dots,n_r\ge2$ and $a_1,\dots,a_r\in\ZZ/N\ZZ$, we have
$$\sum_{q=1}^r\sum_{\substack{k_1+\dots+k_r=n\\k_q=1}}(-1)^{m_q}\prod_{\substack{i=1\\i\ne q}}^r\binom{k_i-1}{n_i-1}\zeta\bigg(\begin{array}{@{}c@{}c@{}c@{}}k_{q-1}&,\dots,&k_1\\a_q-a_{q-1}&,\dots,&a_q-a_1\end{array}\bigg)\zeta\bigg(\begin{array}{@{}c@{}c@{}c@{}}k_{q+1}&,\dots,&k_r\\a_{q+1}-a_q&,\dots,&a_r-a_q\end{array}\bigg)=0,$$
where $m_q=k_1+\cdots+k_{q-1}+n_q$ and $n=n_1+\cdots+n_r$.
\end{cor}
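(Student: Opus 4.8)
The plan is to deduce the corollary from the antipode relation for MLVs (\Cref{prop:antiL}), exactly in the spirit that MZVs of level $N$ are expressible through MLVs (\Cref{prop:zetaL}). Concretely, I would rewrite every multiple zeta value of level $N$ appearing on the left-hand side as a $\QQ(\eta)$-linear combination of $L_\sha$-values, and check that the resulting expression is, term by term in the Fourier variables, the relation of \Cref{prop:antiL}.

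First I would record the precise Fourier expansion. Combining $\zeta=L_\ast\circ\pi$ with the series identity $L_\sha\binom{m_1,\dots,m_s}{d_1,\dots,d_s}=L_\ast\binom{m_1,\dots,m_s}{d_1-d_2,\dots,d_{s-1}-d_s,d_s}$ and changing summation variables yields, with the convention $e_0=0$,
$$\zeta\binom{m_1,\dots,m_s}{e_1,\dots,e_s}=\frac{1}{N^s}\sum_{d_1,\dots,d_s\in\ZZ/N\ZZ}\eta^{-\sum_{j=1}^s d_j(e_j-e_{j-1})}L_\sha\binom{m_1,\dots,m_s}{d_1,\dots,d_s}.$$
Applying this to the two $\zeta$-factors for a fixed split index $q$, and writing $g_i=a_i-a_{i-1}$ for $2\le i\le r$, the forward differences of the index vectors are $e_j-e_{j-1}=g_{q-j+1}$ for the (reversed) left factor and $e_j-e_{j-1}=g_{q+j}$ for the right factor. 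Thus the product becomes a double sum over $d_1,\dots,d_{q-1}$ and $d'_1,\dots,d'_{r-q}$ with $q$-independent normalization $N^{-(r-1)}$.

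The decisive step is to reindex the Fourier variables by $b_i=d_{q-i}$ $(i<q)$ and $b_i=d'_{i-q+1}$ $(i\ge q)$, a bijection onto $(b_1,\dots,b_{r-1})$ under which the two $L_\sha$-factors become precisely $L_\sha\binom{k_{q-1},\dots,k_1}{b_{q-1},\dots,b_1}$ and $L_\sha\binom{k_{q+1},\dots,k_r}{b_q,\dots,b_{r-1}}$. The key identity — the only genuinely non-formal point — is that the phase then collapses to something independent of $q$:
$$\sum_{j=1}^{q-1}d_j g_{q-j+1}+\sum_{j=1}^{r-q}d'_j g_{q+j}=\sum_{i=1}^{r-1}b_i g_{i+1}.$$

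Granting this, I would interchange the sum over $b$ with the sum over $(q,k)$: since neither the sign $(-1)^{m_q}$, the binomials $\binom{k_i-1}{n_i-1}$, the normalization, nor the phase now depends on the inner summation, for each fixed $b$ the inner $(q,k)$-sum is verbatim the left-hand side of \Cref{prop:antiL} with indices $b_1,\dots,b_{r-1}$, hence equals $0$; so the whole expression vanishes. I expect this phase-independence identity to be the main obstacle, since it is exactly what lets the $q$-dependent splitting of the corollary be reorganized into a single application of \Cref{prop:antiL} for each Fourier mode. The remaining work is routine bookkeeping of the change of variables, together with checking the degenerate cases $q=1$ and $q=r$, where one $\zeta$-factor is empty and the corresponding block of Fourier variables is absent.
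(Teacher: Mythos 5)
Your proposal is correct and follows essentially the same route as the paper: both rewrite each $\zeta$-factor as a $\QQ(\eta)$-linear combination of $L_\sha$-values via \Cref{prop:zetaL}, observe that after reindexing the Fourier variables the phase $\eta^{-\sum_{i}b_i(a_{i+1}-a_i)}$ and the normalization $N^{-(r-1)}$ are independent of $q$, and then apply the MLV antipode relation (\Cref{prop:antiL}) for each fixed $(b_1,\dots,b_{r-1})$. The ``key identity'' you flag as the main obstacle is in fact immediate from your own substitutions $i=q-j$ and $i=q+j-1$, so there is no gap.
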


\begin{proof}
Using \Cref{prop:zetaL} and \Cref{prop:antiL}, we have
\begin{align}
(\text{L.H.S})=&\sum_{b_1,\dots,b_{r-1}\in\ZZ/N\ZZ}\eta^{(\bm{a},\bm{b})}\sum_{q=1}^r\sum_{\substack{k_1+\cdots+k_r=n\\k_q=1}}(-1)^{k_1+\cdots+k_{q-1}+n_q}\\
&\times\prod_{\substack{i=1\\i\ne q}}^r\binom{k_i-1}{n_i-1}L_\sha\binom{k_{q-1},\dots,k_1}{b_{q-1},\dots,b_1}L_\sha\binom{k_{q+1},\dots,k_r}{b_q,\dots,b_{r-1}}=0,
\end{align}
where $(\bm{a},\bm{b})=-\sum_{i=1}^{r-1}(a_{i+1}-a_i)b_i$.
\end{proof}

\section{Fourier expansion for multiple Eisenstein series of level $N$}\label{sec:MES}

The Fourier expansion of MES of level $1$ is obtained by Bachmann\footnote{ H. Bachmann, \textit{Multiple Zeta–Werte und die Verbindung zu Modulformen durch
Multiple Eisensteinreihen}, Master’s thesis, Universit\"at Hamburg, 2012.} and written in Bachmann--Tasaka \cite{BT}. In this section, we give the Fourier expansion of MES of level $N$ explicitly, in the same way as Bachmann--Tasaka \cite{BT}. By the definition of the order of lattice points, we can split the sum $\sum_{0\prec\lambda_1\prec\cdots\prec\lambda_r}$ into $2^r$ many terms. In this section, we consider the each term and give its Fourier expansion. Let $\{\xx,\yy\}^\ast$ be the set of all words generated by letters $\xx$ and $\yy$.

\begin{dfn}
For $n_1,\dots,n_r\ge2$, $a_1,\dots,a_r\in{\ZZ/N\ZZ}$ and $w_1\cdots w_r\in\{\xx,\yy\}^\ast$, we define
\begin{align}
G_{w_1\cdots w_r}\bigg(\begin{matrix}n_1,\dots,n_r\\a_1,\dots,a_r\end{matrix};\tau\bigg)\coloneq\lim_{L\to\infty}\lim_{M\to\infty}\sum_{\substack{\lambda_i-\lambda_{i-1}\in P_{w_i}\\\lambda_i\in N\ZZ_L\tau+\ZZ_M,\lambda_i\equiv a_i\hspace{-8pt}\pmod{N}}}\frac{1}{\lambda_1^{n_1}\dots\lambda_r^{n_r}},
\end{align}
where $P_{\xx}=\{l\tau+m\in\ZZ\tau+\ZZ\mid l=0,m>0\}$, $P_{\yy}=\{l\tau+m\in\ZZ\tau+\ZZ\mid l>0\}$ and $l\tau+m\equiv a$ means $m\equiv a$.
\end{dfn}

Note that $\lambda\in P_{\xx}\sqcup P_{\yy}$ if and only if $\lambda\succ0$. 

\begin{lem}[cf. Bachmann--Tasaka {\cite[Proposition 2.2]{BT}}]
For $n_1,\dots,n_r\ge2$ and $a_1,\dots,a_r\in\ZZ/N\ZZ$, we have
\begin{align}
G\bigg(\begin{matrix}n_1,\dots,n_r\\a_1,\dots,a_r\end{matrix};\tau\bigg)=\sum_{w_1,\dots,w_r\in\{\xx,\yy\}}G_{w_1\cdots w_r}\bigg(\begin{matrix}n_1,\dots,n_r\\a_1,\dots,a_r\end{matrix};\tau\bigg).
\end{align}
\end{lem}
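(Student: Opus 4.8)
The plan is to reduce the identity to a disjoint partition of the summation index set performed at the level of the finite truncations, before any limit is taken. The one geometric input I would isolate first is that the order $\succ$ is translation invariant: for lattice points $\lambda,\mu\in\ZZ\tau+\ZZ$ one has $\lambda\succ\mu$ if and only if $\lambda-\mu\succ0$, which is immediate from the defining condition ($l_1>l_2$, or $l_1=l_2$ and $m_1>m_2$) since it only depends on the differences of the coordinates. Adopting the convention $\lambda_0=0$ implicit in the definition of $G_{w_1\cdots w_r}$, this turns the chain condition $0\prec\lambda_1\prec\cdots\prec\lambda_r$ occurring in the definition of $G$ into the conjunction of the $r$ independent gap conditions $\lambda_i-\lambda_{i-1}\succ0$ for $i=1,\dots,r$.

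Next I would invoke the observation recorded just before the statement, namely $\lambda\in P_\xx\sqcup P_\yy\Leftrightarrow\lambda\succ0$, to split each gap. For fixed $i$, the condition $\lambda_i-\lambda_{i-1}\succ0$ holds precisely when $\lambda_i-\lambda_{i-1}$ lies in exactly one of the disjoint sets $P_\xx,P_\yy$, which assigns to the $i$-th gap a unique letter $w_i\in\{\xx,\yy\}$. Distributing the conjunction over these disjunctions and using $P_\xx\cap P_\yy=\emptyset$, one sees that, for fixed truncation parameters $L,M$, the finite index set
\[
\{(\lambda_1,\dots,\lambda_r)\mid 0\prec\lambda_1\prec\cdots\prec\lambda_r,\ \lambda_i\in N\ZZ_L\tau+\ZZ_M,\ \lambda_i\equiv a_i\pmod N\}
\]
decomposes as the disjoint union, over all $(w_1,\dots,w_r)\in\{\xx,\yy\}^r$, of the sets defined by replacing the chain condition with $\lambda_i-\lambda_{i-1}\in P_{w_i}$ for all $i$. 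I would emphasize here that the truncation constraint $\lambda_i\in N\ZZ_L\tau+\ZZ_M$ and the congruence $\lambda_i\equiv a_i\pmod N$ are imposed on each $\lambda_i$ individually and are literally the same on both sides, so only the ordering is being repartitioned.

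Since each truncated sum is finite and the summand $1/(\lambda_1^{n_1}\cdots\lambda_r^{n_r})$ is identical on both sides, this partition gives the exact finite identity
\[
\sum_{0\prec\lambda_1\prec\cdots\prec\lambda_r}\frac{1}{\lambda_1^{n_1}\cdots\lambda_r^{n_r}}=\sum_{w_1,\dots,w_r\in\{\xx,\yy\}}\ \sum_{\lambda_i-\lambda_{i-1}\in P_{w_i}}\frac{1}{\lambda_1^{n_1}\cdots\lambda_r^{n_r}}
\]
at each fixed $(L,M)$, with the common truncation understood on every sum. I would then apply the iterated limit $\lim_{L\to\infty}\lim_{M\to\infty}$: because the outer sum over words ranges over the finite set $\{\xx,\yy\}^r$ of $2^r$ elements, the limit commutes with it term by term, yielding $G=\sum_{w_1,\dots,w_r}G_{w_1\cdots w_r}$.

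The only point demanding care — and the nearest thing to an obstacle — is the bookkeeping around the limits: one must insist that the decomposition is carried out at the finite-truncation stage, so that exchanging the finite sum over words with the iterated limit is automatic and no convergence estimate beyond the existence of each $G_{w_1\cdots w_r}$ (which is built into its definition for $n_i\ge2$) is required. Beyond that, the proof is a purely combinatorial unwinding of the order relation, resting on the translation invariance of $\succ$ and the disjointness $P_\xx\cap P_\yy=\emptyset$.
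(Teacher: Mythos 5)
Your proposal is correct and follows essentially the same route as the paper: both rewrite the chain condition $0\prec\lambda_1\prec\cdots\prec\lambda_r$ as the gap conditions $\lambda_i-\lambda_{i-1}\succ0$, partition each gap via the disjoint union $P_{\xx}\sqcup P_{\yy}$ into $2^r$ terms, and then exchange the finite sum over words with the iterated limit. Your write-up merely makes explicit the translation-invariance of $\succ$ and the finite-truncation bookkeeping that the paper leaves implicit.
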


\subsection{Multitangent function of level $N$}

Multitangent functions are defined by Bouillot \cite{Bouillot}, and he studied the algebraic structure of multitangent functions. In this subsection, we define multitangent function of level $N$ and give its Fourier expansion.
\begin{dfn}
We define the multitangent function of level $N$ $\Psi:\ha^2\to\mathcal{O}(\HH)$ by
$$\Psi\bigg(\begin{matrix}n_1,\dots,n_r\\a_1,\dots,a_r\end{matrix};\tau\bigg)\coloneq\sum_{\substack{-\infty<m_1<\dots<m_r<+\infty\\m_i\equiv a_i\hspace{-8pt}\pmod{N}}}\frac{1}{(\tau+m_1)^{n_1}\dots(\tau+m_r)^{n_r}}$$
for $n_1,\dots,n_r\ge2$ and $a_1,\dots,a_r\in\ZZ/N\ZZ$, and $\Psi(\emptyset;\tau)=1$, together with $\QQ$-linearity. We define
$$\Psi\bigg(\begin{matrix}1\\a\end{matrix}\:;\tau\bigg)\coloneq\lim_{M\to\infty}\sum_{\substack{|m|<M\\m\equiv a\hspace{-8pt}\pmod{N}}}\frac{1}{\tau+m}.$$
for $a\in\ZZ/N\ZZ$ and $\tau\in\HH$.
\end{dfn}

Bouillot (\cite{Bouillot}) proved that any multitangent fuction can be written as a $\QQ$-linear sum of products of MZVs and monotangent functions. He actually treated $\ast$-regularized colored multitangent function. The following lemma can be obtained as a corollary of his result (\cite[Theorem 6]{Bouillot}) in the convergent case.

\begin{lem}\label{lem:multitan}
For $n_1,\dots,n_r\ge2$ and $a_1,\dots,a_r\in\ZZ/N\ZZ$, we have
\begin{align}
\Psi\bigg(\begin{matrix}n_1,\dots,n_r\\a_1,\dots,a_r\end{matrix};\tau\bigg)&=\sum_{q=1}^r\sum_{\substack{k_1+\dots+k_r=n\\k_i\ge1}}(-1)^{n+n_q+k_{q+1}+\dots+k_r}\prod_{\substack{p=1\\p\ne q}}^r\binom{k_p-1}{n_p-1}\\
&\times\zeta\bigg(\begin{array}{@{}c@{}c@{}c@{}}k_{q-1}&,\dots,&k_1\\a_q-a_{q-1}&,\dots,&a_q-a_1\end{array}\bigg)\zeta\bigg(\begin{array}{@{}c@{}c@{}c@{}}k_{q+1}&,\dots,&k_r\\a_{q+1}-a_q&,\dots,&a_r-a_q\end{array}\bigg)\Psi\bigg(\begin{matrix}k_q\\a_q\end{matrix}\:;\tau\bigg),
\end{align}
where $n=n_1+\cdots+n_r$.
\end{lem}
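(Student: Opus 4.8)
The plan is to expand the summand by partial fractions in the variable $\tau$ and then resum over the lattice. For fixed integers $m_1<\cdots<m_r$ the poles $\tau=-m_q$ are distinct, and the essential tool is the expansion, for $i\ne q$,
$$\frac{1}{(\tau+m_i)^{n_i}}=\sum_{j\ge0}(-1)^{j}\binom{n_i+j-1}{j}\frac{(\tau+m_q)^{j}}{(m_i-m_q)^{n_i+j}}$$
of each remaining factor around the pole $\tau=-m_q$. Collecting the coefficient of $(\tau+m_q)^{-k_q}$ and writing $k_i=n_i+j_i$ produces a finite decomposition of $\prod_i(\tau+m_i)^{-n_i}$ into terms $c_{q,k_q}(m_1,\dots,m_r)\,(\tau+m_q)^{-k_q}$, in which $\binom{n_i+j_i-1}{j_i}=\binom{k_i-1}{n_i-1}$ already accounts for the binomial coefficients in the statement, and where the constraint $k_i\ge n_i\ge2$ for every $i\ne q$ forces all indices except $k_q$ to be at least $2$.

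I would then insert this decomposition into the defining sum for $\Psi$ and interchange the order of summation, treating first the terms with $k_q\ge2$, for which everything converges absolutely. The sum over $m_q\equiv a_q\ (\mathrm{mod}\ N)$ of $(\tau+m_q)^{-k_q}$ gives $\Psi\binom{k_q}{a_q}$, while the remaining sum over $m_i$ $(i\ne q)$ splits, along the chain $m_1<\cdots<m_q<\cdots<m_r$, into two independent blocks whose difference variables $m_q-m_i$ $(i<q)$ and $m_i-m_q$ $(i>q)$ run over the positive integers in the classes $a_q-a_i$ and $a_i-a_q$; each block is an admissible level-$N$ MZV, producing the two zeta factors in the statement. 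Tracking the sign $(-1)^{k_i}$ coming from $m_i-m_q<0$ for $i<q$, together with the binomial sign $\prod_{p\ne q}(-1)^{k_p-n_p}=(-1)^{n_q-k_q}$ and the relation $k_1+\cdots+k_r=n$, collapses modulo $2$ to the stated $(-1)^{n+n_q+k_{q+1}+\cdots+k_r}$.

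The crux is the terms with $k_q=1$, whose monotangent $\Psi\binom1{a_q}$ converges only conditionally; showing that their total contribution vanishes is the hard part. A comparison of signs reveals that the weight-one coefficient attached to position $q$ is, up to a global factor $-1$, exactly the corresponding summand of the antipode relation \Cref{cor:antizeta}, so that after summing over $q$ the relevant combination is the one that relation kills. The difficulty I anticipate is twofold. First, one must extract $\Psi\binom1{a_q}$ using the symmetric cutoff $|m_i|\le M$ from the definition and check that the boundary contributions created by truncating the inner MZV sums vanish as $M\to\infty$. Second, and more seriously, \Cref{cor:antizeta} vanishes only after summation over all positions $q$, whereas the monotangents $\Psi\binom1a$ $(a\in\ZZ/N\ZZ)$ are linearly independent functions of $\tau$, so the cancellation must be organised residue class by residue class. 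Establishing the weight-one cancellation in this refined, per-class form, presumably by passing through the character (MLV) description in which the antipode relation \Cref{prop:antiL} holds for each fixed tuple, is the step I expect to be the main obstacle; it is the level-$N$ analogue of the vanishing of the weight-one monotangent in Bouillot's reduction at level one.
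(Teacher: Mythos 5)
Your first two paragraphs reproduce the paper's argument: the same partial fraction decomposition (the paper quotes it in closed form rather than deriving it from the geometric expansion around the pole $\tau=-m_q$, but it is the identical identity, with the vanishing of $\binom{k_i-1}{n_i-1}$ for $k_i<n_i$ forcing $k_i\ge n_i\ge2$ for $i\ne q$ and hence the convergence of the two zeta factors), followed by the same resummation and the same sign bookkeeping. The problem is the third paragraph. The cancellation you correctly identify as the main obstacle --- that the $k_q=1$ contribution vanishes residue class by residue class --- is not merely hard, it is false, so no argument can close the proof in the form you propose. Take $N=2$, $r=2$, $(n_1,n_2)=(2,2)$, $(a_1,a_2)=(0,1)$. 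The identity $\frac{1}{u^2v^2}=\frac{1}{d^2}\left(\frac{1}{u^2}+\frac{1}{v^2}\right)-\frac{2}{d^3}\left(\frac{1}{u}-\frac{1}{v}\right)$ with $d=v-u$ yields
$$\Psi\bigg(\begin{matrix}2,2\\0,1\end{matrix};\tau\bigg)=\zeta\binom{2}{1}\left(\Psi\bigg(\begin{matrix}2\\0\end{matrix}\:;\tau\bigg)+\Psi\bigg(\begin{matrix}2\\1\end{matrix}\:;\tau\bigg)\right)+2\,\zeta\binom{3}{1}\left(\Psi\bigg(\begin{matrix}1\\1\end{matrix}\:;\tau\bigg)-\Psi\bigg(\begin{matrix}1\\0\end{matrix}\:;\tau\bigg)\right),$$
and since $\Psi\big(\begin{smallmatrix}1\\0\end{smallmatrix};\tau\big)=\tfrac{\pi}{2}\cot\tfrac{\pi\tau}{2}$ and $\Psi\big(\begin{smallmatrix}1\\1\end{smallmatrix};\tau\big)=-\tfrac{\pi}{2}\tan\tfrac{\pi\tau}{2}$, the weight-one block equals $2\cdot\tfrac78\zeta(3)\cdot\big(-\pi/\sin\pi\tau\big)\ne0$. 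Only the sum over $q$ of the weight-one coefficients vanishes (consistently with \Cref{cor:antizeta}); for $N>1$ the monotangents $\Psi\big(\begin{smallmatrix}1\\a\end{smallmatrix};\tau\big)$ are linearly independent, and the per-class refinement you would need simply does not hold. The level-one vanishing of the weight-one monotangent in Bouillot's reduction is an artefact of there being a single residue class.

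What reconciles this with the paper is that the restriction $k_q\ge2$ in the displayed statement cannot be what is meant: the last display of the paper's own proof, and the form of the expansion actually used in \Cref{prop:Eisenstein}, run over $k_q\ge1$, i.e.\ the weight-one monotangents are \emph{retained}, interpreted through the symmetric (Eisenstein) summation built into the definition of $\Psi\big(\begin{smallmatrix}1\\a\end{smallmatrix};\tau\big)$. The antipode relation \Cref{cor:antizeta} is invoked only later, in the proof of \Cref{prop:Eisenstein}: after \Cref{lem:monotan} each $k_{q_j}=1$ factor splits into a $g$-type $q$-series plus the constant $-\pi\sqrt{-1}/N$, and it is only this $a$-independent constant whose total coefficient \Cref{cor:antizeta} kills --- not the monotangent itself. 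So you should abandon the attempted cancellation, keep the $k_q=1$ terms in the conclusion, and instead supply the one genuinely analytic point both you and the paper gloss over: that the conditionally convergent sum over $m_q$ in those terms may legitimately be evaluated as $\Psi\big(\begin{smallmatrix}1\\a_q\end{smallmatrix};\tau\big)$ after the interchange of summation. The truncation analysis sketched at the start of your third paragraph is exactly the right tool for that and is where your effort should go.
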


The following lemma gives us the Fourier expansion for multitangent functions of level $N$.

\begin{lem}[Yuan--Zhao {\cite[Lemma 4.1]{YZ15}}]\label{lem:monotan}
For an integer $n\ge1$ and $a\in\ZZ/N\ZZ$, we have
$$\Psi\bigg(\begin{matrix}n\\a\end{matrix}\:;N\tau\bigg)=\left(\frac{-2\pi\sqrt{-1}}{N}\right)^n\sum_{c>0}\frac{c^{n-1}\eta^{ac}}{(n-1)!}q^{c}-\delta_{n,1}\frac{\pi\sqrt{-1}}{N}.$$
\end{lem}

\subsection{Multiple divisor function of level $N$}

Multiple divisor functions are initially studied by Bachmann and K\"{u}hn (\cite{BK}). Yuan and Zhao (\cite{YZ16}) generalized it to arbitrary level and studied the relation to MZV of level $N$.

\begin{dfn}[Yuan--Zhao \cite{YZ16}]\label{dfn:div}
For $n_1,\dots,n_r\ge1$ and $a_1,\dots,a_r\in{\ZZ/N\ZZ}$, we define multiple divisor function of level $N$ $g:\ha^1\to\CC\llbracket q\rrbracket$ by
$$g\bigg(\begin{matrix}n_1,\dots,n_r\\a_1,\dots,a_r\end{matrix};q\bigg)\coloneq\left(\frac{-2\pi\sqrt{-1}}{N}\right)^n\sum_{\substack{0<d_1<\cdots<d_r\\c_1,\dots,c_r>0}}\prod_{i=1}^r\frac{\eta^{a_ic_i}c_i^{n_i-1}}{(n_i-1)!}q^{c_id_i}$$
for $n_1,\dots,n_r\ge1$ and $a_1,\dots,a_r\in\ZZ/N\ZZ$, and $g(\emptyset;q)=1$, together with $\QQ$-linearity, where $n=n_1+\cdots+n_r$.
\end{dfn}

As a holomorphic function on $\HH$, multiple divisor function $g(q)$ can be written as sum of products of monotangent function. The following lemma follows from \Cref{lem:monotan}.

\begin{lem}\label{lem:divisor}
For any $n_1,\dots,n_r\in\ZZ_{\ge2}$, $a_1,\dots,a_r\in{\ZZ/N\ZZ}$ and $\tau\in\HH$, we have
\begin{align}
g\bigg(\begin{matrix}n_1,\dots,n_r\\a_1,\dots,a_r\end{matrix};q\bigg)
=\sum_{0<d_1<\dots<d_r}\Psi\bigg(\begin{matrix}n_1\\a_1\end{matrix}\:;d_1N\tau\bigg)\cdots\Psi\bigg(\begin{matrix}n_r\\a_r\end{matrix}\:;d_rN\tau\bigg).
\end{align}
\end{lem}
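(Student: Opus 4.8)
The plan is to start from the definition of the monotangent Fourier expansion in \Cref{lem:monotan} and multiply $r$ copies together, then recognize the resulting $q$-series as the multiple divisor function. First I would note that since $n_1,\dots,n_r\ge2$, the Kronecker-delta correction term $\delta_{n,1}\frac{\pi\sqrt{-1}}{N}$ in \Cref{lem:monotan} never appears; each factor is simply
$$\Psi\bigg(\begin{matrix}n_i\\a_i\end{matrix}\:;d_iN\tau\bigg)=\left(\frac{-2\pi\sqrt{-1}}{N}\right)^{n_i}\sum_{c_i>0}\frac{c_i^{n_i-1}\eta^{a_ic_i}}{(n_i-1)!}q^{c_id_i},$$
where I have substituted $\tau\mapsto d_i\tau$ so that $q^c$ becomes $q^{c_id_i}$. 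This is the key simplification that makes the level-$N$ case cleaner than Bouillot's original: the monotangent in each slot is already a pure $q$-series with no constant term.

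The central step is then a direct substitution. I would take the product over $i=1,\dots,r$ of these expressions and sum over $0<d_1<\dots<d_r$. The prefactor multiplies out to $\left(\frac{-2\pi\sqrt{-1}}{N}\right)^{n_1+\cdots+n_r}=\left(\frac{-2\pi\sqrt{-1}}{N}\right)^{n}$, matching the prefactor in \Cref{dfn:div}. The summand becomes $\prod_{i=1}^r\frac{\eta^{a_ic_i}c_i^{n_i-1}}{(n_i-1)!}q^{c_id_i}$, summed over all $c_1,\dots,c_r>0$ and over the ordered indices $0<d_1<\dots<d_r$, which is exactly the defining double-indexed sum for $g\binom{n_1,\dots,n_r}{a_1,\dots,a_r}$. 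Reading off the two sides gives the claimed identity.

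The one point requiring a little care — and the step I expect to be the main obstacle — is the interchange and convergence of the multiple sum: \Cref{lem:monotan} gives each factor as a $q$-series valid for $\tau\in\HH$ (equivalently $|q|<1$), but multiplying $r$ such series and then summing over the unbounded simplex $0<d_1<\dots<d_r$ requires that the combined sum over $(c_i)$ and $(d_i)$ converge absolutely so that the product and the reindexing are justified. Since each $q^{c_id_i}$ contributes exponential decay in both $c_i$ and $d_i$ for $|q|<1$, while $c_i^{n_i-1}$ grows only polynomially, absolute convergence is routine; once it is established, Fubini lets me rearrange the terms into the form of \Cref{dfn:div}, completing the proof.
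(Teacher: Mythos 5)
Your proposal is correct and follows exactly the route the paper intends: the paper gives no written proof beyond the remark that the lemma ``follows from \Cref{lem:monotan}'', and your argument---substituting the Fourier expansion of each monotangent factor (with the $\delta_{n,1}$ term absent since $n_i\ge2$), multiplying, and matching the result against \Cref{dfn:div}---is precisely that computation, with the absolute-convergence justification made explicit.
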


\subsection{The Fourier expansion of MES of level $N$}

The Fourier expansion of MES can be written with MZVs and multiple divisor functions.

\begin{prop}\label{prop:Eisenstein}
For any $n_1,\dots,n_r\ge2$, $a_1,\dots,a_r\in{\ZZ/N\ZZ}$ and $w_1\cdots w_r\in\{\xx,\yy\}^*$, we put $w_1\cdots w_r=\xx^{t_1-1}\yy\xx^{t_2-t_1-1}\yy\cdots\xx^{t_h-t_{h-1}-1}\yy\xx^{r-t_h}
$. Then we have
\begin{align}
&G_{w_1\cdots w_r}\bigg(\begin{matrix}n_1,\dots,n_r\\a_1,\dots,a_r\end{matrix};\tau\bigg)\\
&=\zeta\binom{n_1,\dots,n_{t_1-1}}{a_1,\dots,a_{t_1-1}}\rising{5}{$\sum_{\substack{t_1\le q_1\le t_2-1\\[-5pt]\vdots\\t_h\le q_h\le r}}$}\rising{5}{$\sum_{\substack{k_{t_j}+\cdots+k_{t_{j+1}-1}\\=n_{t_j}+\cdots+n_{t_{j+1}-1}\\(1\le j\le h),k_i\ge1}}$}\prod_{j=1}^h\Bigg\{(-1)^{l_j}\Bigg(\prod_{\substack{p=t_j\\p\ne q_j}}^{t_{j+1}-1}\binom{k_p-1}{n_p-1}\Bigg)\\
&\times\zeta\bigg(\begin{array}{@{}c@{}c@{}c@{}}k_{q_j-1}&,\dots,&k_{t_j}\\a_{q_j}-a_{q_j-1}&,\dots,&a_{q_j}-a_{t_j}\end{array}\bigg)\zeta\bigg(\begin{array}{@{}c@{}c@{}c@{}}k_{q_j+1}&,\dots,&k_{t_{j+1}-1}\\a_{q_j+1}-a_{q_j}&,\dots,&a_{t_{j+1}-1}-a_{q_j}\end{array}\bigg)\Bigg\}
g\bigg(\begin{matrix}k_{q_1},\dots,k_{q_h}\\a_{q_1},\dots,a_{q_h}\end{matrix};q\bigg),
\end{align}
where $l_j=n_{t_j}+\cdots+n_{t_{j+1}-1}+n_{q_j}+k_{q_j+1}+\cdots+k_{q_{j+1}-1}$, $t_{h+1}=r+1$ and $a_{r+1}=0$.
\end{prop}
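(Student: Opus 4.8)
The plan is to expand each lattice point as $\lambda_i=Nl_i\tau+m_i$ with $|l_i|\le L$, $|m_i|\le M$, and to translate the conditions $\lambda_i-\lambda_{i-1}\in P_{w_i}$ (with $\lambda_0=0$, i.e.\ $l_0=m_0=0$) into constraints on the coordinates. By the definitions of $P_{\xx}$ and $P_{\yy}$, a letter $w_i=\xx$ forces $l_i=l_{i-1}$ together with $m_i>m_{i-1}$, whereas a letter $w_i=\yy$ forces $l_i>l_{i-1}$ and leaves $m_i$ completely unconstrained. Since $w_1\cdots w_r=\xx^{t_1-1}\yy\xx^{t_2-t_1-1}\yy\cdots\yy\xx^{r-t_h}$, the letters $\yy$ occur exactly at the positions $t_1<\dots<t_h$, so the sequence $(l_i)$ is constant on each of the $h+1$ maximal runs cut out by the $\yy$'s: it is $0$ on the initial block $\{1,\dots,t_1-1\}$ and equals a fixed positive value $d_j$ on the $j$-th block $\{t_j,\dots,t_{j+1}-1\}$, with $0<d_1<\dots<d_h\le L$.

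I would then use this block structure to factor the sum. On the initial block every $\lambda_i=m_i$ is real with $0<m_1<\dots<m_{t_1-1}$ and $m_i\equiv a_i$, so letting $M\to\infty$ this block contributes exactly $\zeta\binom{n_1,\dots,n_{t_1-1}}{a_1,\dots,a_{t_1-1}}$ (interpreted as $1$ when $t_1=1$). On the $j$-th block the $l$-coordinate is the constant $d_j$, so $\lambda_i=Nd_j\tau+m_i$ with $m_{t_j}<\dots<m_{t_{j+1}-1}$; crucially, the $\yy$-step entering the block imposes \emph{no} relation between $m_{t_j}$ and the $m$-values of the previous block, so after $M\to\infty$ the $m$-sums of distinct blocks decouple and the $j$-th block contributes precisely the multitangent function $\Psi\binom{n_{t_j},\dots,n_{t_{j+1}-1}}{a_{t_j},\dots,a_{t_{j+1}-1}}(d_jN\tau)$. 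Taking $L\to\infty$ afterwards turns the bound $d_h\le L$ into the free sum $\sum_{0<d_1<\dots<d_h}$, giving
\begin{align*}
G_{w_1\cdots w_r}\bigg(\begin{matrix}n_1,\dots,n_r\\a_1,\dots,a_r\end{matrix};\tau\bigg)
=\zeta\binom{n_1,\dots,n_{t_1-1}}{a_1,\dots,a_{t_1-1}}\sum_{0<d_1<\dots<d_h}\prod_{j=1}^h\Psi\bigg(\begin{matrix}n_{t_j},\dots,n_{t_{j+1}-1}\\a_{t_j},\dots,a_{t_{j+1}-1}\end{matrix};d_jN\tau\bigg).
\end{align*}

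To finish, I would apply \Cref{lem:multitan} to each factor $\Psi\binom{n_{t_j},\dots,n_{t_{j+1}-1}}{a_{t_j},\dots,a_{t_{j+1}-1}}(d_jN\tau)$. This rewrites the $j$-th block as a sum over a distinguished index $t_j\le q_j\le t_{j+1}-1$ and a composition $k_{t_j}+\dots+k_{t_{j+1}-1}=n_{t_j}+\dots+n_{t_{j+1}-1}$ with $k_{q_j}\ge2$, the coefficient being the sign $(-1)^{l_j}$, the binomial product $\prod_{p\ne q_j}\binom{k_p-1}{n_p-1}$, and the two MZV factors appearing in the statement, times the single monotangent $\Psi\binom{k_{q_j}}{a_{q_j}}(d_jN\tau)$; here the sign exponent $l_j$ is exactly the exponent $n+n_q+k_{q+1}+\cdots$ of \Cref{lem:multitan} specialized to the block. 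Expanding the product over $j$ and interchanging the $d$-sum with these finite composition sums leaves $\sum_{0<d_1<\dots<d_h}\prod_{j=1}^h\Psi\binom{k_{q_j}}{a_{q_j}}(d_jN\tau)$, which by \Cref{lem:divisor} (applicable since each $k_{q_j}\ge2$) equals $g\binom{k_{q_1},\dots,k_{q_h}}{a_{q_1},\dots,a_{q_h}}(q)$. Collecting all coefficients yields the asserted formula.

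The main obstacle I expect is analytic rather than combinatorial: rigorously justifying the nested limits $\lim_{L\to\infty}\lim_{M\to\infty}$ and the attendant factorization. One must verify that for $n_i\ge2$ every multitangent sum converges absolutely, so that the inner limit $M\to\infty$ genuinely decouples the blocks term by term, and that the outer $d$-sum converges and may be interchanged with the finite compositions produced by \Cref{lem:multitan}. This is precisely where the admissibility hypothesis $n_i\ge2$ (membership in $\ha^2$) is essential, and the required convergence bookkeeping parallels that of Bachmann--Tasaka \cite{BT}.
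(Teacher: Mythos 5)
Your overall route is the same as the paper's: split off the initial $\xx$-block as a $\zeta$-value, recognize each $\yy$-initiated block as a multitangent function evaluated at $l_jN\tau$, reduce multitangents to monotangents by \Cref{lem:multitan}, and reassemble the $l$-sum of monotangent products into a multiple divisor function via \Cref{lem:divisor}. The block decomposition and the factorization after $M\to\infty$ are handled correctly.

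However, there is a genuine gap in your treatment of the components $k_{q_j}=1$, and it is precisely the point where the paper has to invoke \Cref{cor:antizeta}. You quote \Cref{lem:multitan} with the restriction $k_{q_j}\ge2$ and then apply \Cref{lem:divisor} (which indeed requires all entries $\ge2$), so what you actually derive is the stated formula with the composition sum restricted to $k_{q_j}\ge2$. But the proposition's right-hand side sums over \emph{all} $k_i\ge1$, and the terms with some $k_{q_j}=1$ do not vanish in general: for a fixed block the antipode relation kills $\sum_{q_j}\sum_{k_{q_j}=1}(\text{coefficient})$, but the accompanying factor $g\big(\begin{smallmatrix}\dots,1,\dots\\\dots,a_{q_j},\dots\end{smallmatrix};q\big)$ still depends on $a_{q_j}$, which varies with $q_j$, so these terms survive whenever the residues $a_i$ within a block differ (already visible for $G_{\yy\xx}\binom{n_1,n_2}{a_1,a_2}$ with $a_1\ne a_2$, where one gets a nonzero multiple of $g\binom{1}{a_1}-g\binom{1}{a_2}$). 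The correct argument keeps the partial-fraction terms with $k_{q_j}\ge1$, as in the proof (rather than the statement) of \Cref{lem:multitan}. Then $\Psi\binom{1}{a}(l_jN\tau)$ splits, by \Cref{lem:monotan}, into a $q$-series part depending on $a$ plus the constant $-\pi\sqrt{-1}/N$; the $q$-series parts assemble with the other blocks into the divisor functions $g$ with entries equal to $1$ appearing in the statement, while the constant parts produce sums over $l_{j}$ of a quantity independent of $l_j$, which diverge as $L\to\infty$ term by term. It is exactly these divergent constant-term contributions whose total coefficient vanishes by \Cref{cor:antizeta} (the constant $-\pi\sqrt{-1}/N$ is independent of $a_{q_j}$, so the antipode relation applies), and this is the content of the last sentence of the paper's proof. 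Your closing paragraph anticipates ``convergence bookkeeping,'' but the resolution is not routine analysis: it requires the algebraic cancellation supplied by the antipode relation, and without it your argument neither produces the $k_{q_j}=1$ terms of the asserted formula nor justifies discarding the divergent pieces.
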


\begin{proof}
By definition of the portion $G_{w_1\cdots w_r}$, we have
\begin{align}
&G_{w_1\cdots w_r}\bigg(\begin{matrix}n_1,\dots,n_r\\a_1,\dots,a_r\end{matrix};\tau\bigg)\\
&=\sum_{\substack{0<l_1<\cdots<l_h\\0\le{}^\forall j\le h,m_{t_j}<\cdots<m_{t_{j+1}-1}\\{}^\forall i>0,m_i>0,m_i\equiv a_i\hspace{-8pt}\pmod{N}}}\frac{1}{m_1^{n_1}\cdots m_{t_1-1}^{n_{t_1-1}}}\prod_{j=1}^h\frac{1}{(l_jN\tau+m_{t_j})^{n_{t_j}}\cdots(l_jN\tau+m_{t_{j+1}-1})^{n_{t_{j+1}-1}}}\\
&=\zeta\binom{n_1,\dots,n_{t_1-1}}{a_1,\dots,a_{t_1-1}}\sum_{0<l_1<\cdots<l_h}\prod_{j=1}^h\Psi\bigg(\begin{matrix}n_{t_j},\dots,n_{t_{j+1}-1}\\a_{t_j},\dots,a_{t_{j+1}-1}\end{matrix};l_jN\tau\bigg).
\end{align}
By \Cref{lem:multitan}, we have
\begin{align}
&\sum_{0<l_1<\cdots<l_h}\prod_{j=1}^h\Psi\bigg(\begin{matrix}n_{t_j},\dots,n_{t_{j+1}-1}\\a_{t_j},\dots,a_{t_{j+1}-1}\end{matrix};l_jN\tau\bigg)\\
&=\lim_{L\to\infty}\rising{5}{$\sum_{\substack{t_1\le q_1\le t_2-1\\[-5pt]\vdots\\t_h\le q_h\le r}}$}\,\rising{5}{$\sum_{\substack{k_{t_j}+\cdots+k_{t_{j+1}-1}\\=n_{t_j}+\cdots+n_{t_{j+1}-1}\\(1\le j\le h),k_i\ge1}}$}\prod_{j=1}^h\Bigg\{(-1)^{l_j}\prod_{\substack{p=t_j\\p\ne q_j}}^{t_{j+1}-1}\binom{k_p-1}{n_p-1}\\
&\times\zeta\bigg(\begin{array}{@{}c@{}c@{}c@{}}k_{q_j-1}&,\dots,&k_{t_j}\\a_{q_j}-a_{q_j-1}&,\dots,&a_{q_j}-a_{t_j}\end{array}\bigg)\zeta\bigg(\begin{array}{@{}c@{}c@{}c@{}}k_{q_j+1}&,\dots,&k_{t_{j+1}-1}\\a_{q_j+1}-a_{q_j}&,\dots,&a_{t_{j+1}-1}-a_{q_j}\end{array}\bigg)\Bigg\}\\
&\times\sum_{0<l_1<\dots<l_h<L}\Psi\bigg(\begin{matrix}k_{q_1}\\a_{q_1}\end{matrix}\:;l_1N\tau\bigg)\cdots\Psi\bigg(\begin{matrix}k_{q_h}\\a_{q_h}\end{matrix}\:;l_hN\tau\bigg).
\end{align}
The constant term in the right hand side vanish if $k_{q_i}=1$ for some $i=1,\dots,h$ by \Cref{cor:antizeta}. The claim follows now from \Cref{lem:divisor}.
\end{proof}

\section{Goncharov coproduct for formal iterated integrals}\label{sec:FMZV}

In this section, we consider the algebra generated by formal iterated integrals, which is introduced by Goncharov \cite{Gon05} and calculate the coproduct for the formal iterated integrals corresponding to MZV of level $N$.

\subsection{Hopf algebra of formal iterated integrals}

Goncharov(\cite{Gon05}) considered the formal version of iterated integrals
\begin{align}\label{eq:formalI}
\int_{a_0}^{a_{m+1}}\frac{dt}{t-a_1}\cdots\frac{dt}{t-a_m}\quad(a_0,\dots,a_{m+1}\in\CC),
\end{align}
and proved the algebra generated by formal iterated integrals has a Hopf algebra structure.

\begin{dfn}[Goncharov \cite{Gon05}]\label{dfn:I(S)}
Let $S$ be a set. Define a commutative graded $\QQ$-algebra $\mathcal{I}(S)$ by
$$\mathcal{I}(S)\coloneq\QQ\left[\II(a_0;a_1,\dots,a_m;a_{m+1})\mid m\ge0,a_i\in S\right]/_{(\mathrm{i})\sim(\mathrm{iv})},$$
where $\mathrm{deg}(\II(a_0;a_1,\dots,a_m;a_{m+1}))=m$ and the quotient modulo the ideal generated by (i) $\sim$ (iv). The relations $(\mathrm{i})\sim(\mathrm{iv})$ are the following:
\begin{enumerate}[label=(\roman*)]
\item$\II(a;b)=1,\quad(a,b\in S).$ \label{(i)}
\item(Shuffle product formula) For $a,b,a_1,\dots,a_{n+m}\in S$, it holds
$$\II(a;a_1,\dots,a_n;b)\II(a;a_{n+1},\dots,a_{n+m};b)=\sum_{\sigma\in Sh^{(n+m)}_n}\II(a;a_{\sigma^{-1}(1)},\dots,a_{\sigma^{-1}(n+m)};b),$$
where
$$Sh^{(n+m)}_n\coloneq\left\{\sigma\in\mathfrak{S}_{n+m}\mid\sigma(1)<\cdots<\sigma(n),\sigma(n+1)<\cdots<\sigma(n+m)\right\}.$$\label{(ii)}
\item(Path composition formula) For $x,a_0,\dots,a_{m+1}\in S$, it holds
$$\II(a_0;a_1,\dots,a_m;a_{m+1})=\sum_{i=0}^m\II(a_0;a_1,\dots,a_i;x)\II(x;a_{i+1},\dots,a_m;a_{m+1}).$$\label{(iii)}
\item$\II(a;a_1,\dots,a_m;a)=0,\quad(a,a_1,\dots,a_m\in S,m\ge1).$\label{(iv)}
\end{enumerate}
\end{dfn}
\begin{thm}[Goncharov \cite{Gon05}]
$\mathcal{I}(S)$ is a graded Hopf algebra with the coproduct $\Delta:\mathcal{I}(S)\rightarrow\mathcal{I}(S)\otimes_{\QQ}\mathcal{I}(S)$ defined by
\begin{align}
&\Delta(\II(a_0;a_1,\dots,a_m;a_{m+1}))\\
\coloneq&\sum_{0=i_0<i_1<\cdots<i_{k}<i_{k+1}=m+1}\prod_{p=0}^k\II(a_{i_p};a_{i_{p+1}},\dots,a_{i_{p+1}-1};a_{i_{p+1}})\otimes\II(a_0;a_{i_1},\dots,a_{i_k};a_{m+1}).
\end{align}
\end{thm}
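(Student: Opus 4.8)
The plan is to treat $\Delta$ first as the unique algebra homomorphism on the free polynomial algebra $\QQ[\II(a_0;a_1,\dots,a_m;a_{m+1})]$ determined by the stated formula on generators, and then to prove four things in turn: that $\Delta$ descends to the quotient $\mathcal{I}_\bullet(S)$ (i.e.\ that it preserves the ideal $J$ generated by the relations (i)--(iv)); that $\Delta$ is coassociative; that a suitable counit satisfies the counit axioms; and that the grading together with connectedness forces the existence of an antipode. The genuine content lies in the first two items, since the counit and the antipode come almost for free once the bialgebra structure is established.

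For well-definedness I would check, relation by relation, that $\Delta(J)\subseteq J\otimes\mathcal{I}_\bullet(S)+\mathcal{I}_\bullet(S)\otimes J$. Relation (i) is immediate: for $m=0$ the only term of the coproduct is $\II(a;b)\otimes\II(a;b)=1\otimes1=\Delta(1)$. Relation (iv) is also quick: for a generator $\II(a;a_1,\dots,a_m;a)$ with equal endpoints, the right-hand tensor factor of each coproduct term is $\II(a;a_{i_1},\dots,a_{i_k};a)$, which vanishes by (iv) whenever $k\ge1$, while the single term with $k=0$ carries the whole generator $\II(a;a_1,\dots,a_m;a)$ as its left factor, again an element of $J$; so every term lands in $J\otimes\mathcal{I}_\bullet(S)+\mathcal{I}_\bullet(S)\otimes J$. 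The substantial work is relations (ii) and (iii). Since $\Delta$ is already multiplicative, for the shuffle formula (ii) I must show that $\Delta(\II(a;a_1,\dots,a_n;b))\,\Delta(\II(a;a_{n+1},\dots,a_{n+m};b))$ equals $\Delta$ of the shuffle sum; I would expand both sides over their cut-data and exhibit a bijection, applying the shuffle relation itself inside each right-hand tensor factor and the multiplicativity of the left-hand "arc" products. For path composition (iii) the argument is parallel but uses the path-composition formula inside the right tensor slot to reinsert the intermediate point $x$, after which a reindexing of the cut points matches the two expansions.

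Coassociativity I would verify directly on a generator. Applying $(\Delta\otimes\mathrm{id})\circ\Delta$ and $(\mathrm{id}\otimes\Delta)\circ\Delta$ each produces a sum indexed by a coarse increasing sequence of cut points together with a refinement of it; in one order the refinement subdivides the arcs of the left factor, in the other it subdivides the retained right factor, and a bijection between these two families of nested sequences, together with the path-composition relation used to restore the endpoints of each sub-arc, shows the two triple tensors coincide. This is the same kind of combinatorial matching as in relation (iii), now organised over two levels of subdivision.

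Finally I would take the counit $\eps$ defined by $\eps(\II(a_0;a_1,\dots,a_m;a_{m+1}))=0$ for $m\ge1$ and the identity on the degree-zero part $\QQ$. The axiom $(\eps\otimes\mathrm{id})\circ\Delta=\mathrm{id}$ survives only the term in which every arc of the left factor has degree zero, forcing $k=m$ and leaving the full generator on the right; dually $(\mathrm{id}\otimes\eps)\circ\Delta=\mathrm{id}$ survives only $k=0$. The grading is respected because the left factor has degree $(m+1)-(k+1)=m-k$ and the right factor degree $k$, which sum to $m$; since $\mathcal{I}_0(S)=\QQ$, the bialgebra is connected graded and hence admits a necessarily unique antipode by the standard recursion. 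The main obstacle I expect is the compatibility of $\Delta$ with the shuffle relation (ii): producing the bijection between cut-data on a product and cut-data on its shuffle so that the arc factors match cleanly is the delicate combinatorial step, whereas (i), (iv), the counit, and the grading are routine.
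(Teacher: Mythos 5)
The paper does not prove this theorem at all: it is imported wholesale from Goncharov \cite{Gon05}, so there is no in-paper argument to measure yours against. Judged on its own terms, your outline follows the standard (and essentially the only reasonable) route: define $\Delta$ on the free polynomial algebra, check that it preserves the ideal generated by (i)--(iv), verify coassociativity on generators, and obtain the counit and antipode from connectedness and the grading. Your treatment of (i) and (iv), the two counit identities (surviving terms $k=m$ and $k=0$ respectively), the degree count $(m-k)+k=m$, and the existence of a unique antipode for a connected graded bialgebra are all correct.

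The one place where the sketch genuinely understates the difficulty is the compatibility with the shuffle relation (ii). You propose ``a bijection, applying the shuffle relation itself inside each right-hand tensor factor and the multiplicativity of the left-hand arc products,'' but the left-hand factors do \emph{not} match term by term under any bijection of cut-data: in a term of $\Delta(\II(a;a_1,\dots,a_n;b))\,\Delta(\II(a;a_{n+1},\dots,a_{n+m};b))$ every arc has interior letters from a single one of the two words, whereas in a term of $\Delta$ applied to a shuffled word the arc between two consecutive marked points generally mixes letters from both words and has endpoints of mixed provenance. Already in depth one this is visible: with $I_1=\II(a;x;b)$, $I_2=\II(a;y;b)$ and only $x$ marked, the product side contributes $\II(a;y;b)\otimes\II(a;x;b)$, while the two shuffles contribute $\bigl(\II(x;y;b)+\II(a;y;x)\bigr)\otimes\II(a;x;b)$, and these agree only after invoking the path-composition relation (iii) at the intermediate point $x$ on the \emph{left} tensor factors. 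In general one must fix the marked subsets and the shuffle of the marked letters, sum over all compatible shuffles of the unmarked letters, and collapse the resulting sum of mixed arcs to the product of pure arcs by recursive use of (ii) and (iii) inside each arc; Goncharov organises this through his semicircle combinatorics. The same grouping issue recurs in your coassociativity argument. Until that collapsing step is carried out explicitly, the proposal is a correct plan rather than a proof.
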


\begin{rem}
The counit $\eps_G:\mathcal{I}(S)\to\QQ$ is defined by
$$\eps_G(u)=\begin{cases}1&\mathrm{deg}(u)=0\\0&\mathrm{deg}(u)>0\end{cases}.$$
The antipode is determined inductively on the degree.
\end{rem}

\subsection{Formal iterated integrals corresponding MLV and MZV of level $N$}\label{subsection:FMZV}

Hereinafter, we consider the case $S=\{\eta,\eta^2,\dots,\eta^N,0\}$ and denote $\mathcal{I}=\mathcal{I}(S)$. Let $\mathfrak{a}$ be an ideal of $\mathcal{I}$ generated by $\{\II(0;0;a)\mid a\in S\setminus\{0\}\}$, and let $\mathcal{I}^0$ be the quotient $\mathcal{I}^0=\mathcal{I}/\mathfrak{a}$. The following proposition follows since an ideal generated by a primitive element is a Hopf ideal.

\begin{prop}[Bachmann--Tasaka \cite{BT}]
$(\mathcal{I}^0,\Delta)$ is a Hopf algebra.
\end{prop}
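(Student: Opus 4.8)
The plan is to show that the quotient construction $\mathcal{I}^0 = \mathcal{I}/\mathfrak{a}$ respects the Hopf algebra structure of $\mathcal{I}$, so that the quotient inherits a well-defined coproduct, counit, and antipode. The standard machinery here is the theory of Hopf ideals: if $\mathfrak{a}$ is a \emph{Hopf ideal} of $\mathcal{I}$, meaning that $\mathfrak{a}$ is a two-sided ideal satisfying $\Delta(\mathfrak{a}) \subseteq \mathfrak{a}\otimes\mathcal{I} + \mathcal{I}\otimes\mathfrak{a}$, that $\eps_G(\mathfrak{a}) = 0$, and that $\mathcal{S}(\mathfrak{a}) \subseteq \mathfrak{a}$, then the quotient $\mathcal{I}/\mathfrak{a}$ is automatically a Hopf algebra with structure maps induced from those on $\mathcal{I}$. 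So the whole problem reduces to verifying these three conditions for the ideal $\mathfrak{a}$ generated by the elements $\II(0;0;a)$ with $a \in S\setminus\{0\}$.

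First I would check the counit condition, which is immediate: each generator $\II(0;0;a)$ has degree $1 > 0$, so $\eps_G(\II(0;0;a)) = 0$ by the definition of the counit in the Remark, and since $\eps_G$ is an algebra homomorphism this extends to all of $\mathfrak{a}$. Next, and this is the heart of the argument, I would verify the coideal condition $\Delta(\mathfrak{a}) \subseteq \mathfrak{a}\otimes\mathcal{I} + \mathcal{I}\otimes\mathfrak{a}$. Because $\Delta$ is an algebra homomorphism, it suffices to evaluate $\Delta$ on each generator $\II(0;0;a)$ and check that every term lands in the desired sum; the ideal condition then propagates to products. Applying Goncharov's coproduct formula to $\II(a_0;a_1;a_2) = \II(0;0;a)$ (so $m=1$, $a_0 = 0$, $a_1 = 0$, $a_2 = a$), the only admissible index choice is $i_0 = 0 < i_1 = 1 < i_2 = 2$, giving
$$\Delta(\II(0;0;a)) = \II(0;0;a)\otimes\II(0;a) + \II(a;a)\otimes\II(0;0;a).$$
By relation (i), $\II(0;a) = \II(a;a) = 1$, so this collapses to $\Delta(\II(0;0;a)) = \II(0;0;a)\otimes 1 + 1\otimes\II(0;0;a)$, which manifestly lies in $\mathfrak{a}\otimes\mathcal{I} + \mathcal{I}\otimes\mathfrak{a}$. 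Hence each generator is primitive modulo the required subspace, and the coideal condition holds.

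Finally I would treat the antipode. The antipode $\mathcal{S}$ on a connected graded Hopf algebra is determined recursively by degree via the convolution identity $\mathcal{S}\star\mathrm{id} = u\circ\eps_G$; on the degree-one primitive element $\II(0;0;a)$ this forces $\mathcal{S}(\II(0;0;a)) = -\II(0;0;a) \in \mathfrak{a}$, and the coideal property together with the recursion propagates $\mathcal{S}(\mathfrak{a}) \subseteq \mathfrak{a}$ to the whole ideal. With all three conditions verified, the induced maps $\overline{\Delta}$, $\overline{\eps_G}$, $\overline{\mathcal{S}}$ on $\mathcal{I}^0$ satisfy the Hopf algebra axioms because they are quotients of maps that already satisfy them on $\mathcal{I}$. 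The step I expect to be the main obstacle is not any single computation—each is short—but rather the bookkeeping of confirming that the ideal $\mathfrak{a}$ is genuinely a coideal once one passes from the generators to arbitrary products, i.e. ensuring that the Leibniz-type expansion of $\Delta$ on a product $u\cdot\II(0;0;a)\cdot v$ keeps at least one tensor factor inside $\mathfrak{a}$; this follows formally from $\Delta$ being an algebra map and $\mathfrak{a}\otimes\mathcal{I} + \mathcal{I}\otimes\mathfrak{a}$ being an ideal in $\mathcal{I}\otimes\mathcal{I}$, but it is the point that requires care rather than mere symbol-pushing.
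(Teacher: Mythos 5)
Your proposal is correct and follows essentially the same route as the paper: compute $\Delta(\II(0;0;a))$ from Goncharov's formula, observe that each generator is primitive, conclude that $\mathfrak{a}$ is a Hopf ideal, and pass to the quotient. The only blemish is a notational slip in the intermediate step (the left factor of the second term should be $\II(0;0)\,\II(0;a)$ rather than $\II(a;a)$, though both equal $1$ by relation (i)), which does not affect the conclusion.
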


We give some important properties of formal iterated integrals.

\begin{lem}[Goncharov \cite{Gon05}]\label{lem:G1}
For any $a_0,\dots,a_{m+1}\in S$, we have
$$\II(a_0;a_1,\dots,a_m;a_{m+1})=(-1)^m\II(a_{m+1};a_m,\dots,a_1;a_0).$$
\end{lem}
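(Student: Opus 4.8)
The plan is to pass to generating series and read the identity off from two different factorizations of a single inverse. A direct induction using only the path composition formula (iii) stalls: applying (iii) and (iv) to $\II(a_0;a_1,\dots,a_m;a_0)=0$ with intermediate point $a_{m+1}$ produces $\II(a_0;a_1,\dots,a_m;a_{m+1})+\II(a_{m+1};a_1,\dots,a_m;a_0)+(\text{lower products})=0$, and the second term is not itself a reversal, so the induction does not close. This is what motivates packaging everything into a series. Since the letters lie in $S$, for each pair $a,b\in S$ I would form
\[
\mathbb{I}(a;b)\coloneq\sum_{m\ge0}\ \sum_{a_1,\dots,a_m\in S}\II(a;a_1,\dots,a_m;b)\,X_{a_1}\cdots X_{a_m}
\]
in the completed free associative algebra $\mathcal{I}_\bullet\langle\langle X_s\mid s\in S\rangle\rangle$, where the $X_s$ are noncommuting variables and multiplication is concatenation. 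The lemma becomes a coefficientwise comparison of two descriptions of $\mathbb{I}(a;b)^{-1}$.

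First I would rewrite the relations of \Cref{dfn:I(S)} as statements about $\mathbb{I}$. Relations (i) and (iv) give $\mathbb{I}(a;a)=1$, since the constant term is $\II(a;a)=1$ and every coefficient $\II(a;w;a)$ with $w\ne\emptyset$ vanishes. Summing the path composition formula (iii) over all words is Chen's multiplicativity $\mathbb{I}(a;b)=\mathbb{I}(a;x)\mathbb{I}(x;b)$, valid for each $x\in S$; taking $b=a$ and $x=b$ yields $\mathbb{I}(a;b)\mathbb{I}(b;a)=\mathbb{I}(a;a)=1$, hence
\[
\mathbb{I}(b;a)=\mathbb{I}(a;b)^{-1}.
\]
The coefficient of $X_{a_1}\cdots X_{a_m}$ on the left is $\II(b;a_1,\dots,a_m;a)$: this is the first description of the inverse.

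For the second description I would use the shuffle product formula (ii), which says precisely that $w\mapsto c_w\coloneq\II(a;w;b)$ is an algebra homomorphism from the shuffle algebra on the alphabet $S$ into the commutative algebra $\mathcal{I}_\bullet$, with $c_\emptyset=1$. Applying this homomorphism to the antipode identity $\sum_{uv=w}(-1)^{l(u)}\overset{\leftarrow}{u}\sha v=0$ (this is \Cref{lem:antipode}, whose proof works verbatim for the alphabet $S$) gives
\[
\sum_{i=0}^m(-1)^i\,\II(a;a_i,\dots,a_1;b)\,\II(a;a_{i+1},\dots,a_m;b)=0\qquad(m\ge1).
\]
This is exactly the assertion that $\sum_{m\ge0}\sum_{a_1,\dots,a_m}(-1)^m\II(a;a_m,\dots,a_1;b)\,X_{a_1}\cdots X_{a_m}$ is an inverse of $\mathbb{I}(a;b)$, because on multiplying the two series the coefficient of $X_{a_1}\cdots X_{a_m}$ is precisely the left-hand side above (so it vanishes for $m\ge1$ and equals $1$ for $m=0$). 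Hence this series also equals $\mathbb{I}(a;b)^{-1}$, and its coefficient of $X_{a_1}\cdots X_{a_m}$ is $(-1)^m\II(a;a_m,\dots,a_1;b)$. Comparing with the first description yields $\II(b;a_1,\dots,a_m;a)=(-1)^m\II(a;a_m,\dots,a_1;b)$, and renaming $b=a_0$, $a=a_{m+1}$ gives exactly the statement of the lemma.

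The step I expect to be the main obstacle is the identification of $\mathbb{I}(a;b)^{-1}$ with signed word-reversal: one must be sure that it is the shuffle relations (ii), rather than the path composition structure, that force the inverse to be the signed reversal, i.e. that the homomorphism $w\mapsto\II(a;w;b)$ really does turn \Cref{lem:antipode} into the displayed vanishing identity with the correct signs and ordering. Once this is pinned down, the remaining ingredients (Chen's identity, $\mathbb{I}(a;a)=1$, and extracting coefficients) are routine bookkeeping.
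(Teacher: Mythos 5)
Your argument is correct, and it is worth noting that the paper itself gives no proof of this statement: \Cref{lem:G1} is simply quoted from Goncharov, so there is no in-paper argument to compare against. Your proof is a legitimate self-contained derivation from the defining relations (i)--(iv) of \Cref{dfn:I(S)}. The three pillars all check out: $\mathbb{I}(a;a)=1$ follows from (i) and (iv); summing the path composition formula (iii) over all words is exactly the statement $\mathbb{I}(a;b)=\mathbb{I}(a;x)\mathbb{I}(x;b)$ under concatenation of the $X_s$, and specializing gives $\mathbb{I}(a;b)\mathbb{I}(b;a)=1$; and relation (ii) makes $w\mapsto\II(a;w;b)$ a shuffle-algebra homomorphism, so the antipode identity of \Cref{lem:antipode} (whose proof indeed depends only on Hoffman's theorem for the shuffle algebra on an arbitrary alphabet, here $S$) transports to the displayed vanishing identity, which is precisely the coefficientwise statement that the signed-reversal series is a left inverse of $\mathbb{I}(a;b)$. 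Since the constant terms are $1$, one-sided inverses in $\mathcal{I}_\bullet\langle\langle X_s\rangle\rangle$ are two-sided and unique, so the two descriptions of $\mathbb{I}(a;b)^{-1}$ agree coefficientwise, and the words $X_{a_1}\cdots X_{a_m}$ separate tuples, so extracting coefficients is legitimate. The only cosmetic remark is that your opening claim that a direct induction ``stalls'' is motivation rather than mathematics and could be dropped; the proof itself is complete as written.
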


For $n_1,\dots,n_r\ge1$ and $a,a_1,\dots,a_r\in\ZZ/N\ZZ$, denote
\begin{gather}
\II_a\binom{n_1,\dots,n_r}{a_1,\dots,a_r}\coloneq\II(0;\eta^{a_1},\{0\}^{n_1-1},\dots,\eta^{a_r},\{0\}^{n_r-1};\eta^a),\\
\II\binom{n_1,\dots,n_r}{a_1,\dots,a_r}\coloneq\II_0\binom{n_1,\dots,n_r}{a_1,\dots,a_r}.
\end{gather}
\Cref{dfn:L} implies $\II\binom{n_1,\dots,n_r}{a_1,\dots,a_r}$ corresponds to MLV of shuffle type
$$(-1)^rL_{\sha}\binom{n_1,\dots,n_r}{-a_1,\dots,-a_r}=\int_0^1\frac{dt}{t-\eta^{a_1}}\left(\frac{dt}{t}\right)^{n_1-1}\cdots\frac{dt}{t-\eta^{a_r}}\left(\frac{dt}{t}\right)^{n_r-1}.$$

\begin{lem}[see Brown \cite{Br12}, Bachmann--Tasaka \cite{BT}]\label{lem:G2}
For any $n,n_1,\dots,n_r\ge1$ and $a,a_1,\dots,a_r\in\ZZ/N\ZZ$, we have
\begin{align}
&\II(0;\{0\}^n,\eta^{a_1},\{0\}^{n_1-1},\dots,\eta^{a_r},\{0\}^{n_r-1};\eta^a)\\
&=(-1)^n\sum_{\substack{k_1+\cdots+k_r=n+n_1+\cdots+n_r\\k_i\ge n_i}}\prod_{p=1}^r\binom{k_p-1}{n_p-1}\II_a\binom{k_1,\dots,k_r}{a_1,\dots,a_r}.
\end{align}
\end{lem}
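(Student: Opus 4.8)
The plan is to derive \Cref{lem:G2} from the shuffle product relation (ii) of \Cref{dfn:I(S)} together with the vanishing of leading-zero integrals of the form $\II(0;\{0\}^n;\eta^a)$ in $\mathcal{I}^0$. First I would record that, by iterating (ii), one has $\II(0;\{0\}^n;\eta^a)=\frac{1}{n!}\II(0;0;\eta^a)^n$, so for $n\ge1$ this lies in the ideal $\mathfrak{a}$ and is therefore $0$ in $\mathcal{I}^0$. Consequently the product $\II(0;\{0\}^n;\eta^a)\cdot\II_a\binom{n_1,\dots,n_r}{a_1,\dots,a_r}$ also vanishes in $\mathcal{I}^0$. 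The idea is to compute this same product a \emph{second} way, through the shuffle formula, and read off a recursion.

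Expanding the product by (ii) amounts to shuffling the word $\{0\}^n$ into $w=\eta^{a_1},\{0\}^{n_1-1},\dots,\eta^{a_r},\{0\}^{n_r-1}$. Since every inserted letter equals $0$, each resulting word has the shape $\{0\}^{c_0},\eta^{a_1},\{0\}^{k_1-1},\dots,\eta^{a_r},\{0\}^{k_r-1}$ with $k_j=n_j+c_j$ and $c_0+\sum_j c_j=n$; grouping shuffles by this shape, the coefficient of such a word is the number of ways to distribute $c_j$ indistinguishable zeros among the $n_j$ gaps adjacent to the $j$-th block, namely $\prod_{j=1}^r\binom{k_j-1}{n_j-1}$ (the $c_0$ leading zeros contributing the factor $1$). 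The term with $c_0=n$ is exactly the left-hand side $\II(0;\{0\}^n,w;\eta^a)$, so equating the whole product to $0$ gives
$$\II(0;\{0\}^n,w;\eta^a)=-\sum_{c_0=0}^{n-1}\,\sum_{\sum_j c_j=n-c_0}\prod_{j=1}^r\binom{n_j+c_j-1}{n_j-1}\,\II\!\left(0;\{0\}^{c_0},\eta^{a_1},\{0\}^{n_1+c_1-1},\dots;\eta^a\right).$$

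I would then prove the formula by induction on $n$, the base case $n=0$ being immediate since the constraint forces $k_j=n_j$. Applying the induction hypothesis to each leading-zero integral with $c_0<n$ on the right, the coefficient of a fixed standard word $\II_a\binom{\dots,n_j+C_j,\dots}{a_1,\dots,a_r}$ with $\sum_j C_j=n$ becomes, after the subset-of-a-subset identity $\binom{n_j+c_j-1}{n_j-1}\binom{n_j+C_j-1}{n_j+c_j-1}=\binom{n_j+C_j-1}{n_j-1}\binom{C_j}{c_j}$,
$$-\prod_{j=1}^r\binom{n_j+C_j-1}{n_j-1}\,\sum_{c_0=0}^{n-1}(-1)^{c_0}\sum_{\substack{\sum_j c_j'=c_0\\0\le c_j'\le C_j}}\prod_{j=1}^r\binom{C_j}{c_j'}.$$
By Vandermonde's identity the inner sum equals $\binom{n}{c_0}$, and $\sum_{c_0=0}^{n-1}(-1)^{c_0}\binom{n}{c_0}=(-1)^{n+1}$ for $n\ge1$, which collapses the coefficient to $(-1)^n\prod_j\binom{n_j+C_j-1}{n_j-1}$, exactly the claimed value. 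The main obstacle is precisely this final bookkeeping: correctly identifying the shuffle multiplicities as the binomials $\binom{k_j-1}{n_j-1}$, and then showing that the nested redistribution sums telescope via Vandermonde to produce the single global sign $(-1)^n$. Once the recursion and these two binomial identities are in place, the induction closes cleanly.
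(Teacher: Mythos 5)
Your proof is correct, and it rests on the same engine as the paper's: the vanishing of $\II(0;0;\eta^a)$ in $\mathcal{I}^0$, the shuffle product formula (ii), and induction on $n$. The difference is in the granularity of the recursion. The paper multiplies by a \emph{single} $\II(0;0;\eta^a)$, so the shuffle inserts one zero at a time; the resulting identity reads $0=n\,\II(0;\{0\}^n,w;\eta^a)+\sum_q n_q\,\II(0;\{0\}^{n-1},\dots,\eta^{a_q},\{0\}^{n_q},\dots;\eta^a)$, the induction hypothesis applies at level $n-1$, and the step closes with the one-line identity $\sum_q n_q\binom{k_q-1}{n_q}\prod_{p\ne q}\binom{k_p-1}{n_p-1}=\bigl(\sum_q(k_q-n_q)\bigr)\prod_p\binom{k_p-1}{n_p-1}=n\prod_p\binom{k_p-1}{n_p-1}$. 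You instead multiply by the whole block $\II(0;\{0\}^n;\eta^a)=\frac{1}{n!}\II(0;0;\eta^a)^n$, which produces a recursion involving \emph{all} smaller leading-zero lengths $c_0<n$ at once; this forces strong induction and the heavier bookkeeping you describe (the subset-of-a-subset identity, the Vandermonde convolution $\sum_{\sum c_j'=c_0}\prod_j\binom{C_j}{c_j'}=\binom{n}{c_0}$, and the alternating sum $\sum_{c_0=0}^{n-1}(-1)^{c_0}\binom{n}{c_0}=(-1)^{n+1}$). Your multiplicity count $\prod_j\binom{k_j-1}{n_j-1}$ for each shuffled word is right, and the telescoping does collapse to the claimed $(-1)^n$, so the argument is sound; it is simply more combinatorial work than the single-zero peeling, whose only payoff here is that it makes the global sign $(-1)^n$ emerge all at once rather than one factor of $-1$ per step.
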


Using these properties, we know that any element of $\mathcal{I}^0$ can be written as a polynomial of $\II_a\binom{n_1,\dots,n_r}{a_1,\dots,a_r}$

\begin{prop}
It holds
$$\mathcal{I}^0=\QQ\left[\II_a\binom{n_1,\dots,n_r}{a_1,\dots,a_r}\relmid r\ge0,n_i\ge1,a,a_i\in\ZZ/N\ZZ\right].$$
\end{prop}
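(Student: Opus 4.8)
The plan is to show that every generator $\II(a_0; a_1,\dots,a_m; a_{m+1})$ of $\mathcal{I}^0$ can be rewritten as a $\QQ$-polynomial in the elements $\II_a\binom{n_1,\dots,n_r}{a_1,\dots,a_r}$, since $\mathcal{I}^0$ is generated as a $\QQ$-algebra by these $\II$-symbols. First I would dispose of the endpoints: by relation (iv) the symbol vanishes unless $a_0 \ne a_{m+1}$, and by \Cref{lem:G1} we may reverse the word, so after multiplying by a suitable $\eta$-power we can normalize the left endpoint to $a_0 = 0$. Thus it suffices to treat symbols of the form $\II(0; b_1,\dots,b_m; \eta^a)$ with each $b_i \in S = \{0,\eta,\dots,\eta^N\}$.

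Next I would group the interior letters. Writing the word $b_1 \cdots b_m$ by marking the positions of the nonzero letters, we can express it uniquely in the shape $\{0\}^{n}\,\eta^{a_1}\{0\}^{n_1-1}\cdots \eta^{a_r}\{0\}^{n_r-1}$ for some $n \ge 0$, $r \ge 0$, $n_i \ge 1$ and $a_i \in \ZZ/N\ZZ$; here $n$ counts the leading zeros before the first nonzero letter. When $n = 0$ the symbol is exactly $\II_a\binom{n_1,\dots,n_r}{a_1,\dots,a_r}$ by definition, so it is already a generator. When $n \ge 1$, \Cref{lem:G2} expands it as an explicit finite $\QQ$-linear combination of the symbols $\II_a\binom{k_1,\dots,k_r}{a_1,\dots,a_r}$ with $\sum k_i = n + \sum n_i$ and $k_i \ge n_i$. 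In either case the generator lies in the claimed polynomial subalgebra.

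The one remaining point is the degenerate case $r = 0$, i.e.\ a word consisting entirely of zeros: $\II(0; \{0\}^m; \eta^a)$. For $m = 0$ this is $\II(0;\eta^a) = 1$ by relation (i). For $m = 1$ it is $\II(0;0;\eta^a)$, which is precisely the generator of the ideal $\mathfrak{a}$ and hence is zero in the quotient $\mathcal{I}^0$; for $m \ge 1$ one sees inductively (again via the product $\II(0;0;\eta^a)\cdot\II(0;\{0\}^{m-1};\eta^a)$ together with relation (iv)) that $\II(0;\{0\}^m;\eta^a) = 0$ in $\mathcal{I}^0$. Collecting these cases, every generating $\II$-symbol of $\mathcal{I}^0$ is a $\QQ$-polynomial in the $\II_a\binom{n_1,\dots,n_r}{a_1,\dots,a_r}$, which yields the inclusion $\mathcal{I}^0 \subseteq \QQ[\II_a(\cdots)]$; the reverse inclusion is trivial since each such symbol is itself an element of $\mathcal{I}^0$.

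I expect the main obstacle to be purely bookkeeping rather than conceptual: one must check that the normalization via \Cref{lem:G1} does not reintroduce words outside the normal form, and that the handling of leading versus trailing zeros is consistent after the reversal. The essential mathematical content is entirely contained in \Cref{lem:G2} (absorbing leading zeros) and in the vanishing of all-zero symbols modulo $\mathfrak{a}$; the rest is careful indexing.
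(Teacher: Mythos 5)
There is a genuine gap at the very first step, the normalization of the endpoints. You propose to reduce a general generator $\II(a_0;a_1,\dots,a_m;a_{m+1})$ to one of the form $\II(0;b_1,\dots,b_m;\eta^a)$ by using relation (iv), reversal (\Cref{lem:G1}), and ``multiplying by a suitable $\eta$-power.'' No such scaling operation exists in the formal algebra $\mathcal{I}_\bullet(S)$: the defining relations are only (i)--(iv), and the paper explicitly remarks that $\mathcal{I}^0$ ``has no information about homothety property'' (this is precisely why the map $\mu$ has to be introduced afterwards). Even if a homothety $\II(\lambda a_0;\lambda a_1,\dots;\lambda a_{m+1})=\II(a_0;\dots;a_{m+1})$ were available, multiplication by $\eta^k$ is injective on $S\setminus\{0\}$ and could never move a nonzero endpoint to $0$. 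So for a symbol such as $\II(\eta;\dots;\eta^2)$, neither relation (iv) (the endpoints differ) nor \Cref{lem:G1} (reversal still leaves both endpoints nonzero) gets you into your normal form, and your argument stalls.

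The missing ingredient is the path composition formula (iii) applied at the intermediate point $x=0$:
\begin{equation}
\II(a_0;a_1,\dots,a_m;a_{m+1})=\sum_{i=0}^m\II(a_0;a_1,\dots,a_i;0)\,\II(0;a_{i+1},\dots,a_m;a_{m+1}),
\end{equation}
after which \Cref{lem:G1} turns each left factor into $(-1)^i\,\II(0;a_i,\dots,a_1;a_0)$. This expresses every generator as a sum of \emph{products} of symbols of the form $\II(0;\dots;a)$, which is exactly where the paper starts; note that this forces you to work with products (harmless, since the target is a polynomial algebra), not with a single normalized word as in your write-up. From that point on your argument is sound and agrees with the paper's: the grouping of interior letters, the absorption of leading zeros via \Cref{lem:G2}, and the vanishing of the all-zero words $\II(0;\{0\}^m;\eta^a)$ modulo $\mathfrak{a}$ by the shuffle identity $\II(0;0;\eta^a)\II(0;\{0\}^{m-1};\eta^a)=m\,\II(0;\{0\}^m;\eta^a)$ are all correct.
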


\begin{proof}
Any element $\II(a_0;a_1,\dots,a_m;a_{m+1})\in\mathcal{I}^0$ can be expressed as a sum of  products for some $\II(0;\dots;a)$ $(a\in S)$ by \ref{(iii)} and \Cref{lem:G1}. Using \ref{(ii)} and \Cref{lem:G2}, it can be written as a sum of products for some $\II_a\binom{n_1,\dots,n_r}{a_1,\dots,a_r}$. Now, the products can be expressed as a $\QQ$-linear combination of them by using the shuffle product formula.
\end{proof}

Let $\mathcal{I}^1$ be a subalgebra of $\mathcal{I}^0$ defined by
$$\mathcal{I}^1\coloneq\left\langle\II\binom{n_1,\dots,n_r}{a_1,\dots,a_r}\in\mathcal{I}^0\relmid r\ge0,n_i>0,a_i\in\ZZ/N\ZZ\right\rangle_\QQ$$
and let $\mu:\mathcal{I}^0\twoheadrightarrow\mathcal{I}^1$ be a surjective algebra homomorphism defined by
$$\mu\left(\II_a\binom{n_1,\dots,n_r}{a_1,\dots,a_r}\right)=\II\bigg(\begin{array}{@{}c@{}c@{}c@{}c}n_1&,\dots,&n_r\\a_1-a&,\dots,&a_r-a\end{array}\bigg).$$

\begin{rem}
It holds $$\int_0^{\eta^a}\frac{dt}{t-\eta^{a_1}}\left(\frac{dt}{t}\right)^{n_1-1}\!\!\!\!\!\!\cdots\frac{dt}{t-\eta^{a_r}}\left(\frac{dt}{t}\right)^{n_r-1}\!\!\!\!=\int_0^1\frac{dt}{t-\eta^{a_1-a}}\left(\frac{dt}{t}\right)^{n_1-1}\!\!\!\!\!\!\cdots\frac{dt}{t-\eta^{a_r-a}}\left(\frac{dt}{t}\right)^{n_r-1}$$ if the both sides conveges. $\mu$ is an operator corresponding to such variable changing of integrals.
\end{rem}

Let $\Delta_\mu:\mathcal{I}^1\to\mathcal{I}^1\otimes\mathcal{I}^1$ be an algebra homomorphism defined by
$$\Delta_\mu\coloneq(\mu\otimes\mu)\circ\Delta|_{\mathcal{I}^1}.$$

\begin{prop}
$(\mathcal{I}^1,\Delta_\mu,\eps_G)$ is a Hopf algebra.
\end{prop}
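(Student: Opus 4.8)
The plan is to verify that $(\mathcal{I}^1,\Delta_\mu,\eps_G)$ inherits the Hopf algebra structure from $(\mathcal{I}^0,\Delta)$ by exhibiting $\mathcal{I}^1$ as a Hopf subalgebra, or more precisely as a quotient Hopf algebra via $\mu$. Since $\mathcal{I}^0$ is already known to be a Hopf algebra with coproduct $\Delta$, counit $\eps_G$, and an antipode determined inductively by degree, the work reduces to checking that the coalgebra structure descends along $\mu$ in a coherent way. First I would show that $\Delta_\mu$ is coassociative. Because $\Delta$ is coassociative on $\mathcal{I}^0$ and $\mu$ is an algebra homomorphism, the natural approach is to prove that $\mu$ is compatible with $\Delta$ in the sense that $(\mu\otimes\mu)\circ\Delta=\Delta_\mu\circ\mu$ as maps $\mathcal{I}^0\to\mathcal{I}^1\otimes\mathcal{I}^1$; coassociativity of $\Delta_\mu$ then follows by applying $\mu^{\otimes 3}$ to the coassociativity identity for $\Delta$.

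The heart of the argument is therefore the intertwining relation between $\mu$ and the Goncharov coproduct. I would verify this directly on the generators $\II_a\binom{n_1,\dots,n_r}{a_1,\dots,a_r}=\II(0;\eta^{a_1},\{0\}^{n_1-1},\dots,\eta^{a_r},\{0\}^{n_r-1};\eta^a)$ by unwinding the definition of $\Delta$. Applying $\Delta$ produces a sum over subsequences $0=i_0<i_1<\cdots<i_k<i_{k+1}=m+1$ of products of formal iterated integrals of the shape $\II(a_{i_p};\dots;a_{i_{p+1}})$ tensored with an integral whose endpoints are $a_0=0$ and $a_{m+1}=\eta^a$. The key observation, which mirrors the level $1$ computation of Bachmann--Tasaka, is that each sub-integral $\II(a_{i_p};\dots;a_{i_{p+1}})$ has both endpoints among the $\eta^b$, so after applying $\mu$ its value depends only on the differences $a_i-a_{i_p}$ of the exponents; this is exactly the homothety normalization that $\mu$ enforces. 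I expect the main obstacle to be bookkeeping: one must track how the exponent shifts $a_i\mapsto a_i-a$ interact with the subsequence decomposition, confirm that the left tensor factors land in $\mathcal{I}^1$ after applying $\mu$, and check that the normalization is consistent across all the factors $\II(a_{i_p};\ldots;a_{i_{p+1}})$ simultaneously rather than just for a single factor.

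Once the intertwining identity $(\mu\otimes\mu)\circ\Delta=\Delta_\mu\circ\mu$ is established, the remaining verifications are formal. Coassociativity of $\Delta_\mu$ follows as described. For the counit, I would check $(\eps_G\otimes\mathrm{id})\circ\Delta_\mu=\mathrm{id}=(\mathrm{id}\otimes\eps_G)\circ\Delta_\mu$ on $\mathcal{I}^1$; since $\eps_G$ is already the counit for $\Delta$ on $\mathcal{I}^0$ and kills everything of positive degree, and since $\mu$ preserves degree and fixes $\eps_G$, the counit axioms descend from the corresponding axioms on $\mathcal{I}^0$ via the same intertwining relation. The grading is preserved because both $\Delta$ and $\mu$ respect the weight $m$, so $\mathcal{I}^1$ is a graded, connected bialgebra. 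Finally, since $\mathcal{I}^1$ is a connected graded bialgebra (its degree-zero part is $\QQ\cdot\II(0;;0)=\QQ$), an antipode exists automatically and is determined inductively on the degree by the standard formula, completing the proof that $(\mathcal{I}^1,\Delta_\mu,\eps_G)$ is a Hopf algebra.
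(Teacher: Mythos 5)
Your proposal is correct and takes essentially the same route as the paper: both reduce the problem to the intertwining identity $(\mu\otimes\mu)\circ\Delta\circ\mu=(\mu\otimes\mu)\circ\Delta$ on the generators $\II_a\binom{n_1,\dots,n_r}{a_1,\dots,a_r}$ (which the paper defers to the explicit coproduct computation of \Cref{lem:phi}), deduce coassociativity of $\Delta_\mu$ formally from that of $\Delta$, note the counit axioms are immediate, and obtain the antipode inductively from the connected grading. One minor imprecision in your sketch: the sub-integrals $\II(a_{i_p};\dots;a_{i_{p+1}})$ in the Goncharov coproduct need not have both endpoints of the form $\eta^b$ (marked interior points may be $0$), so the translation-invariance enforced by $\mu$ becomes visible only after rewriting each factor in terms of the $\II_b$-generators via the path composition formula together with \Cref{lem:G1} and \Cref{lem:G2}, exactly as in the paper's \Cref{lem:phi}.
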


\begin{proof}
It is clear that $\eps_G$ and $\Delta_\mu$ satisfy the counitary property. Let us check the coassociativity. If it holds $\Delta_\mu\circ\mu=\Delta_\mu$ on $\mathcal{I}^0$, we have
$$(\Delta_\mu\otimes\mathrm{id})\circ\Delta_\mu(u)=(\Delta_\mu\otimes\mathrm{id})\sum\mu(u_1)\otimes\mu(u_2)=\sum\mu(u_1)\otimes\mu(u_2)\otimes\mu(u_3)=(\mathrm{id}\otimes\Delta_\mu)\circ\Delta_\mu(u).$$
So it suffices to show that
$$\Delta_\mu\circ\mu\left(\II_a\binom{n_1,\dots,n_r}{a_1,\dots,a_r}\right)=\Delta_\mu\left(\II_a\binom{n_1,\dots,n_r}{a_1,\dots,a_r}\right)$$
for any $n_1,\dots,n_r\ge1$ and $a,a_1,\dots,a_r\in\ZZ/N\ZZ$. This statement follows from the calculation of $\Delta$ in the next subsection (\cref{lem:phi}). The compatibility of the shuffle product and coproduct $\Delta$ follows from $\Delta_\mu\circ\mu=\Delta_\mu$ and the compatibility of product and $\mu$. The antipode is determined inductively since the product and coproduct preserve the degree.
\end{proof}

\subsection{Computing Goncharov coproduct}\label{subsection:CompGon}

In this subsection, we give the explicit formula for the Goncharov coproduct of $\II\binom{n_1,\dots,n_r}{a_1,\dots,a_r}$, which correspond to MLVs of shuffle type. Then, we consider formal iterated integrals corresponding MZVs of level $N$ and give its coproduct.

In the same way as Bachmann--Tasaka \cite{BT}, we calcurate $\Delta\left(\II\binom{n_1,\dots,n_r}{a_1,\dots,a_r}\right)$ by splitting into $2^r$ many terms.

For positive integers $0=i_0<i_1<\cdots<i_k<i_{k+1}=n+1$ $(0\le k\le n)$ and  $\eps_1,\dots,\eps_n\in S$, we define $\varphi_{i_1,\dots,i_k}(\eps_1,\dots,\eps_n)\in\mathcal{I}^0\otimes\mathcal{I}^0$ by
$$\varphi_{i_1,\dots,i_k}(\eps_1,\dots,\eps_n)\coloneq\prod_{p=0}^k\II(\eps_{i_p};\eps_{i_{p+1}},\dots,\eps_{i_{p+1}-1};\eps_{i_{p+1}})\otimes\II(0;\eps_{i_1},\dots,\eps_{i_k};1),$$
where $\eps_0=0$, $\eps_{n+1}=1$, and denote
$$\varphi_{i_1,\dots,i_k}\binom{n_1,\dots,n_r}{a_1,\dots,a_r}\coloneq\varphi_{i_1,\dots,i_k}(\eta^{a_1},\{0\}^{n_1-1},\dots,\eta^{a_r},\{0\}^{n_r-1}).$$
Further, we put
$$\iota_{n_1,\dots,n_r}(w_1\cdots w_r)\coloneq\{n_1+\cdots+n_{t_1-1}+1,\dots,n_1+\cdots+n_{t_h-1}+1\}$$
for $n_1,\dots,n_r\ge1$ and $w_1\cdots w_r=\xx^{t_1-1}\yy\xx^{t_2-t_1-1}\yy\cdots\xx^{t_h-t_{h-1}-1}\yy\xx^{r-t_h}\in\{\xx,\yy\}^*$ $(0<t_1<\cdots<t_n<r+1)$.

\begin{dfn}
For any word $w_1\cdots w_r=\xx^{t_1-1}\yy\xx^{t_2-t_1-1}\yy\cdots\xx^{t_h-t_{h-1}-1}\yy\xx^{r-t_h}\in\{\xx,\yy\}^*$, $n_1,\dots,n_r\ge1$ and $a_1,\dots,a_r\in\ZZ/N\ZZ$, we define
\begin{align}
\Phi_{w_1\cdots w_r}\binom{n_1,\dots,n_r}{a_1,\dots,a_r}\coloneq\sum_{k=h}^n\sum_{\substack{1\le i_1<\cdots<i_k\le n\\\{i_1,\dots,i_k\}\cap\{1,n_1+1,\dots,n_1+\cdots+n_{r-1}+1\}\\=\iota_{n_1,\dots,n_r}(w_1\cdots w_r)}}\varphi_{i_1,\dots,i_k}\binom{n_1,\dots,n_r}{a_1,\dots,a_r},
\end{align}
where $n=n_1+\cdots+n_r$.
\end{dfn}

\begin{lem}\label{lem:DeltaPhi}
For any $n_1,\dots,n_r\ge1$ and $a_1,\dots,a_r\in\ZZ/N\ZZ$, we have
$$\Delta\left(\II\binom{n_1,\dots,n_r}{a_1,\dots,a_r}\right)=\sum_{w_1,\dots,w_r\in\{\xx,\yy\}}\Phi_{w_1\cdots w_r}\binom{n_1,\dots,n_r}{a_1,\dots,a_r}.$$
\end{lem}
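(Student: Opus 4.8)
The plan is to unwind both sides into the single defining sum of the Goncharov coproduct and then recognize the identity as a regrouping of that sum along one combinatorial partition. Write $\II\binom{n_1,\dots,n_r}{a_1,\dots,a_r}=\II(0;\eps_1,\dots,\eps_n;1)$ with $n=n_1+\cdots+n_r$ and $(\eps_1,\dots,\eps_n)=(\eta^{a_1},\{0\}^{n_1-1},\dots,\eta^{a_r},\{0\}^{n_r-1})$, and for a subset $I=\{i_1<\cdots<i_k\}\subseteq\{1,\dots,n\}$ abbreviate $\varphi_I\coloneq\varphi_{i_1,\dots,i_k}\binom{n_1,\dots,n_r}{a_1,\dots,a_r}$. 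The definition of $\Delta$ applied to $\II(0;\eps_1,\dots,\eps_n;1)$ is, term by term, the sum of $\varphi_I$ over all such $I$, because the conventions $\eps_0=0$, $\eps_{n+1}=1$ used in defining $\varphi$ match the endpoints $a_0=0$, $a_{m+1}=1$ of the coproduct. Thus the first (purely notational) step gives
$$\Delta\left(\II\binom{n_1,\dots,n_r}{a_1,\dots,a_r}\right)=\sum_{I\subseteq\{1,\dots,n\}}\varphi_I.$$

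Next I would isolate the set $P=\{1,n_1+1,\dots,n_1+\cdots+n_{r-1}+1\}$ of the $r$ positions in $(\eps_1,\dots,\eps_n)$ carrying the letters $\eta^{a_j}$ (the remaining positions carrying $0$). The key combinatorial input is that $\bm w=w_1\cdots w_r\mapsto\iota_{n_1,\dots,n_r}(w_1\cdots w_r)$ is a bijection from the $2^r$ words $\bm w$ of length $r$ in $\{\xx,\yy\}$ onto the $2^r$ subsets of $P$: writing $w_1\cdots w_r=\xx^{t_1-1}\yy\cdots\yy\xx^{r-t_h}$, its $\yy$'s sit exactly at the letter-indices $t_1<\cdots<t_h$, and $\iota_{n_1,\dots,n_r}$ sends the word to $\{n_1+\cdots+n_{t_1-1}+1,\dots,n_1+\cdots+n_{t_h-1}+1\}$, i.e. to the subset of $P$ indexed by its $\yy$-positions. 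Hence distinct words yield distinct subsets of $P$ and every subset of $P$ is attained exactly once.

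With this in hand the proof reduces to interchanging the order of summation. By definition $\Phi_{w_1\cdots w_r}$ collects precisely those $\varphi_I$ with $I\cap P=\iota_{n_1,\dots,n_r}(w_1\cdots w_r)$; the lower bound $k\ge h$ imposed there is automatic, since $I\cap P=\iota_{n_1,\dots,n_r}(w_1\cdots w_r)$ already forces $I$ to contain these $h$ elements. Summing over all words and invoking the bijection, the sets $\iota_{n_1,\dots,n_r}(w_1\cdots w_r)=I\cap P$ range over every subset of $P$ exactly once, so each $I\subseteq\{1,\dots,n\}$ contributes its term $\varphi_I$ to $\sum_{\bm w}\Phi_{\bm w}$ exactly once, namely to the unique word $\bm w$ with $\iota_{n_1,\dots,n_r}(\bm w)=I\cap P$. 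Therefore
$$\sum_{w_1,\dots,w_r\in\{\xx,\yy\}}\Phi_{w_1\cdots w_r}=\sum_{I\subseteq\{1,\dots,n\}}\varphi_I=\Delta\left(\II\binom{n_1,\dots,n_r}{a_1,\dots,a_r}\right),$$
the second equality being the first display above, which is the assertion.

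I expect no analytic or algebraic obstacle here: every step is a finite regrouping of the defining sum of $\Delta$. The only point demanding genuine care — and which I regard as the crux — is checking that $\iota_{n_1,\dots,n_r}$ really implements the bijection between $\{\xx,\yy\}$-words and subsets of the letter-positions $P$, so that the conditions $I\cap P=\iota_{n_1,\dots,n_r}(\bm w)$ partition all index sets $I$ without overlap or omission. Once the positions of the $\eta^{a_j}$ in the integrand string are correctly matched with the $\yy$-positions of $\bm w$, the identity follows immediately.
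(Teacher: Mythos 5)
Your proof is correct and follows essentially the same route as the paper: both expand $\Delta$ as the sum of $\varphi_I$ over all index sets $I\subseteq\{1,\dots,n\}$ and then regroup according to $I\cap P$, using that $\iota_{n_1,\dots,n_r}$ identifies $\{\xx,\yy\}$-words with subsets of the letter-positions $P$. The only difference is presentational — you phrase the regrouping as a bijection on subsets of $P$, while the paper sums over the cardinality $h=\#(I\cap P)$ — and both are valid.
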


\begin{proof}
By definition of the coproduct $\Delta$, we have
$$\Delta\left(\II\binom{n_1,\dots,n_r}{a_1,\dots,a_r}\right)=\sum_{k=0}^n\sum_{1\le i_1<\cdots<i_k\le n}\varphi_{i_1,\dots,i_k}\binom{n_1,\dots,n_r}{a_1,\dots,a_r}.$$
Meanwhile, it holds
\begin{align}
\sum_{w_1,\dots,w_r\in\{\xx,\yy\}}\Phi_{w_1\cdots w_r}\binom{n_1,\dots,n_r}{a_1,\dots,a_r}&=\sum_{h=0}^r\sum_{\substack{w_1,\dots,w_r\in\{\xx,\yy\}\\\deg_{\yy}(w_1\cdots w_r)=h}}\Phi_{w_1\cdots w_r}\binom{n_1,\dots,n_r}{a_1,\dots,a_r}\\
&=\sum_{h=0}^r\sum_{k=h}^r\sum_{\substack{1\le i_1<\cdots<i_k\le n\\\#\{i_1,\dots,i_k\}\cap P_{n_1,\dots,n_r}=h}}\varphi_{i_1,\dots,i_k}\binom{n_1,\dots,n_r}{a_1,\dots,a_r}\\
&=\sum_{k=0}^n\sum_{1\le i_1<\cdots<i_k\le n}\varphi_{i_1,\dots,i_k}\binom{n_1,\dots,n_r}{a_1,\dots,a_r},
\end{align}
where $P_{n_1,\dots,n_r}=\{1,n_1+1,n_1+n_2+1,\dots,n_1+\cdots+n_{r-1}+1\}$.
\end{proof}

The following lemma gives us the explicit formula for the Goncharov coproduct of $\II\binom{n_1,\dots,n_r}{a_1,\dots,a_r}$.

\begin{lem}\label{lem:phi}
For any $n_1,\dots,n_r\ge1$, $a_1,\dots,a_r\in\ZZ/N\ZZ$ and any word $w_1\cdots w_r=\xx^{t_1-1}\yy\xx^{t_2-t_1-1}\yy$ $\cdots\xx^{t_h-t_{h-1}-1}\yy\xx^{r-t_h}\in\{\xx,\yy\}^*$, we have
\begin{align}
&\Phi_{w_1\cdots w_r}\binom{n_1,\dots,n_r}{a_1,\dots,a_r}=\left(\II_{a_{t_1}}\binom{n_1,\dots,n_{t_1-1}}{a_1,\dots,a_{t_1-1}}\otimes1\right)\\
&\times\rising{5}{$\sum_{\substack{t_1\le q_1\le t_2-1\\[-5pt]\vdots\\t_h\le q_h\le r}}$}\rising{5}{$\sum_{\substack{k_{t_j}+\cdots+k_{t_{j+1}-1}\\=n_{t_j}+\cdots+n_{t_{j+1}-1}\\(1\le j\le h),k_i\ge1}}$}\prod_{j=1}^h\Bigg\{(-1)^{l_j}\Bigg(\prod_{\substack{p=t_j\\p\ne q_j}}^{t_{j+1}-1}\binom{k_p-1}{n_q-1}\Bigg)\\
&\times\II_{a_{t_j}}\binom{k_{q_j-1},\dots,k_{t_j}}{a_{q_j},\dots,a_{t_j+1}}\II_{a_{t_{j+1}}}\binom{k_{q_j+1},\dots,k_{t_{j+1}-1}}{a_{q_j+1},\dots,a_{t_{j+1}-1}}\Bigg\}\otimes\II\binom{k_{q_1},\dots,k_{q_h}}{a_{t_1},\dots,a_{t_h}},
\end{align}
where $l_j=n_{t_j}+\cdots+n_{t_{j+1}-1}+n_{q_j}+k_{q_j+1}+\cdots+k_{q_{j+1}-1}$, $t_{h+1}=r+1$ and $a_{r+1}=0$.
\end{lem}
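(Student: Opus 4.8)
The plan is to compute $\Delta$ directly on the configurations selected by the definition of $\Phi_{w_1\cdots w_r}$, grouping the sum around the markers that are forced to be chosen. Unpacking the definition, the condition $\{i_1,\dots,i_k\}\cap P_{n_1,\dots,n_r}=\iota_{n_1,\dots,n_r}(w_1\cdots w_r)$ says that the chosen indices meet the set of markers exactly in $\{\eta^{a_{t_1}},\dots,\eta^{a_{t_h}}\}$ (the points corresponding to the letters $\yy$), every other chosen index being a ``$0$-point''. In each term $\varphi_{i_1,\dots,i_k}$ these $\eta^{a_{t_j}}$ occur, in order, in the right tensor factor $\II(0;\eps_{i_1},\dots,\eps_{i_k};1)$, and they are always endpoints of the gap integrals making up the left tensor factor $\prod_p\II(\eps_{i_p};\dots;\eps_{i_{p+1}})$. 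Hence the left factor splits as a prefix (gaps before $\eta^{a_{t_1}}$) times one contribution per block $[t_j,t_{j+1}-1]$, while the right factor records the surviving weights at the $\eta^{a_{t_j}}$.

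The next step is to discard the vanishing configurations using the relations of $\mathcal{I}^0$. A gap between two chosen $0$-points is $\II(0;\cdots;0)$, which is $0$ by (iv) unless its interior is empty; this forces the chosen $0$-points inside each inter-marker region to form a single contiguous run, lying inside the $0$-string of one block, which is exactly the pivot $q_j$ for that region. Two further vanishings are needed and are less immediate. First, if any point before $\eta^{a_{t_1}}$ were chosen, the leading gap would be $\II(0;\cdots;0)$ with nonempty interior, so no such point is chosen and the prefix is forced to be the full integral $\II_{a_{t_1}}\binom{n_1,\dots,n_{t_1-1}}{a_1,\dots,a_{t_1-1}}\otimes1$. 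Second, the ``pure $0$'' boundary gaps satisfy $\II(0;\{0\}^m;\eta^a)=0$ in $\mathcal{I}^0$ for $m\ge1$, which follows by induction from $\II(0;0;\eta^a)\in\mathfrak{a}$ using the shuffle relation (ii); consequently every configuration whose weight at some $\eta^{a_{t_j}}$ is too small drops out, and this is precisely what produces the composition constraint $\sum_{i=t_j}^{t_{j+1}-1}k_i=\sum_{i=t_j}^{t_{j+1}-1}n_i$.

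With the surviving configurations identified, I would evaluate the two gap integrals flanking the run in block $j$. The right gap begins with a string of $0$'s followed by the unchosen markers $\eta^{a_{q_j+1}},\dots,\eta^{a_{t_{j+1}-1}}$ and ends at $\eta^{a_{t_{j+1}}}$, so it is already of the form treated by \Cref{lem:G2}, which absorbs the leading $0$'s and returns $\II_{a_{t_{j+1}}}\binom{k_{q_j+1},\dots,k_{t_{j+1}-1}}{a_{q_j+1},\dots,a_{t_{j+1}-1}}$ together with the binomials $\binom{k_p-1}{n_p-1}$ for $q_j<p\le t_{j+1}-1$. The left gap starts at $\eta^{a_{t_j}}$ and ends at a $0$; I would first reverse it by \Cref{lem:G1} and then apply \Cref{lem:G2}, the reversal accounting for the descending indices and colors in $\II_{a_{t_j}}\binom{k_{q_j-1},\dots,k_{t_j}}{a_{q_j},\dots,a_{t_j+1}}$ and supplying the binomials $\binom{k_p-1}{n_p-1}$ for $t_j\le p<q_j$. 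Conservation of the total weight across the two gaps, the marker $\eta^{a_{t_j}}$, and the run of length $k_{q_j}-1$ reproduces the composition constraint, while summing over the position of the run gives the pivot sum $t_j\le q_j\le t_{j+1}-1$; assembling the prefix, the per-block factors, and the right factor $\II\binom{k_{q_1},\dots,k_{q_h}}{a_{t_1},\dots,a_{t_h}}$ yields the claimed identity.

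The main obstacle is the sign and binomial bookkeeping in the last step. This per-block computation is exactly the iterated-integral analogue of the partial fraction decomposition used in the proof of \Cref{lem:multitan}, with \Cref{lem:G2} playing the role of partial fractions; the delicate part is to combine the reversal sign from \Cref{lem:G1} on the left gap with the sign produced by \Cref{lem:G2} so that they add up to the stated exponent $l_j$. I would verify this by matching it, term by term, against the sign appearing in \Cref{lem:multitan} restricted to the block $[t_j,t_{j+1}-1]$, whose combinatorial structure is identical.
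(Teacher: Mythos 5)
Your proposal is correct and follows essentially the same route as the paper: restrict $\Delta$ to the configurations whose chosen markers are exactly the $\yy$-positions, use relation (iv) to force the extra chosen $0$-points into a single contiguous run per block (locating the pivot $q_j$), and evaluate the two flanking gap integrals by reversing the left one with \Cref{lem:G1} and absorbing leading zeros with \Cref{lem:G2} before collecting the signs $(-1)^{l_j}$ and binomials. The paper phrases the per-block decomposition via the path composition formula rather than by direct enumeration of runs, and leaves the degenerate vanishing $\II(0;\{0\}^m;\eta^a)=0$ implicit in the empty sum of \Cref{lem:G2}, but these are presentational differences only.
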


\begin{proof}
The left-hand side is a sum of all terms of $\Delta\left(\II\binom{n_1,\dots,n_r}{a_1,\dots,a_r}\right)$ such that the edges in the diagram are $0,\eta^{a_{t_1}},\dots,\eta^{a_{t_h}}$. Using the path composition formula
\begin{align}
&\II(\eta^{a_{t_j}};\{0\}^{n_{t_j-1}},\eta^{a_{t_j+1}},\dots,\{0\}^{n_{t_{j+1}-1}-1};\eta^{a_{t_{j+1}}})\\
&=\sum_{t_j\le q_j\le t_{j+1}-1}\sum_{0\le l_{q_j}\le n_{q_j}-1}\II(\eta^{a_{t_j}};\{0\}^{n_{t_j}-1},\eta^{a_{t_j+1}},\dots,\eta^{a_{q_j}},\{0\}^{l_{q_j}};0)\\
&\times\II(0;\{0\}^{n_{q_j}-l_{q_j}-1},\eta^{a_{q_j+1}},\dots,\{0\}^{n_{t_{j+1}-1}-1};\eta^{a_{t_{j+1}}}),
\end{align}
we have
\begin{align}
&\Phi_{w_1\cdots w_r}\binom{n_1,\dots,n_r}{a_1,\dots,a_r}\\
&=(\II(0;\eta^{a_1},\{0\}^{n_1-1},\dots,\eta^{a_{t_1-1}},\{0\}^{n_{t_1-1}};\eta^{a_{t_1}})\otimes1)\\
&\times\bigg(\rising{8}{$\sum_{\substack{t_1\le q_1\le t_2-1\\[-5pt]\vdots\\t_h\le q_h\le r}}\sum_{\substack{0<l_{q_1}+k_{q_1}\le n_{q_1}\\[-7pt]\vdots\\[1pt]0<l_{q_h}+k_{q_h}\le n_{q_h}\\l_i\ge0,k_i\ge1}}$}\prod_{j=1}^h\II(\eta^{a_{t_j}};\{0\}^{n_{t_j}-1},\eta^{a_{t_j+1}},\{0\}^{n_{t_j+1}-1},\dots,\eta^{a_{q_j}},\{0\}^{l_{q_j}};0)\\
&\times(\II(0;0))^{k_{q_j}-2}\II(0;\{0\}^{n_{q_j}-l_{q_j}-k_{q_j}},\eta^{a_{q_j+1}},\{0\}^{n_{q_j+1}-1},\dots,\eta^{a_{t_{j+1}-1}},\{0\}^{n_{t_{j+1}-1}-1};\eta^{a_{t_{j+1}}})\\
&\otimes\II(0;\eta^{a_{t_1}},\{0\}^{k_{q_1}-1},\dots,\eta^{a_{t_h}},\{0\}^{k_{q_h}-1};1)\bigg).
\end{align}
Here, when $k_{q_j}=1$, we understand $(I(0;0))^{k_{q_j}-2}=1$. By \Cref{lem:G1},\Cref{lem:G2}, we have
\begin{align}
&\II(\eta^{a_{t_j}};\{0\}^{n_{t_j}-1},\eta^{a_{t_j+1}},\{0\}^{n_{t_j+1}-1},\dots,\eta^{a_{q_j}},\{0\}^{l_{q_j}};0)\\
&=(-1)^{n_{t_j}+\cdots+n_{q_j-1}+l_{q_j}}\II(0;\{0\}^{l_{q_j}},\eta^{a_{q_j}},\dots,\{0\}^{n_{t_j+1}-1},\eta^{a_{t_j+1}},\{0\}^{n_{t_j}-1};\eta^{a_{t_j}})\\
&=(-1)^{n_{t_j}+\cdots+n_{q_j-1}}\hspace{-20pt}\sum_{\substack{k_{t_j}+\cdots+k_{q_j-1}\\=n_{t_j}+\cdots+n_{q_j-1}+l_{q_j}}}\hspace{-5pt}\prod_{p=t_j}^{q_j-1}\binom{k_p-1}{n_p-1}\II_{a_{t_j}}\binom{k_{q_j-1},\dots,k_{t_j}}{a_{q_j},\dots,a_{t_{j+1}}},
\end{align}
and
\begin{align}
&\II(0;\{0\}^{n_{q_j}-l_{q_j}-k_{q_j}},\eta^{a_{q_j+1}},\{0\}^{n_{q_j+1}-1},\dots,\eta^{a_{t_{j+1}-1}},\{0\}^{n_{t_{j+1}-1}-1};\eta^{a_{t_{j+1}}})\\
&=(-1)^{n_{q_j}-l_{q_j}-k_{q_j}}\hspace{-40pt}\sum_{\substack{k_{q_j+1}+\cdots+k_{t_{j+1}-1}\\=n_{q_j}-l_{q_j}-k_{q_j}+n_{q_j+1}+\cdots+n_{t_{j+1}-1}}}\hspace{-20pt}\prod_{p=q_j+1}^{t_{j+1}-1}\binom{k_p-1}{n_p-1}\II_{a_{t_{j+1}}}\binom{k_{q_j+1},\dots,k_{t_{j+1}-1}}{a_{q_j+1},\dots,a_{t_{j+1}-1}}.
\end{align}
Therefore, we have
\begin{align}
&\Phi_{w_1\cdots w_r}\binom{n_1,\dots,n_r}{a_1,\dots,a_r}=\left(\II_{a_{t_1}}\binom{n_1,\dots,n_{t_1-1}}{a_1,\dots,a_{t_1-1}}\otimes1\right)\\
&\times\sum_{\substack{t_1\le q_1\le t_2-1\\[-5pt]\vdots\\t_h\le q_h\le r}}\sum_{(k_{q_1},l_{q_1},\dots,k_{q_h},l_{q_h})}\prod_{j=1}^h\Bigg\{(-1)^{n_{t_j}+\cdots+n_{q_j}-l_{q_j}-k_{q_j}}\Bigg(\prod_{\substack{p=t_j\\p\ne q_j}}^{t_{j+1}-1}\binom{k_p-1}{n_q-1}\Bigg)\\
&\times\II_{a_{t_j}}\binom{k_{q_j-1},\dots,k_{t_j}}{a_{q_j},\dots,a_{t_j+1}}\II_{a_{t_{j+1}}}\binom{k_{q_j+1},\dots,k_{t_{j+1}-1}}{a_{q_j+1},\dots,a_{t_{j+1}-1}}\Bigg\}\otimes\II\binom{k_{q_1},\dots,k_{q_h}}{a_{t_1},\dots,a_{t_h}}.
\end{align}
Here, the second sum runs over
\begin{align}
\left\{(k_{q_1},l_{q_1},\dots,k_{q_h},l_{q_h})\relmid\begin{array}{l}l_i\ge0,k_i\ge1,0<l_{q_j}+k_{q_j}\le n_{q_j},\\k_{t_j}+\cdots+k_{q_j-1}=n_{t_j}+\cdots+n_{q_j-1}+l_{q_j},\\k_{q_j+1}+\cdots+k_{t_{j+1}-1}=n_{q_j+1}+\cdots+n_{t_{j+1}-1}+n_{q_j}-l_{q_j}-k_{q_j}.\end{array}\right\}.
\end{align}
This is exactly the right-hand side of the claim.
\end{proof}

Let $\IT$ be a formal iterated integral corresponding to MZV of level $N$ defined by
$$\IT\binom{n_1,\dots,n_r}{a_1,\dots,a_r}\coloneq\frac{(-1)^r}{N^r}\sum_{b_1,\dots,b_r\in\ZZ/N\ZZ}\eta^{\rho(\bm{a})\cdot\bm{b}}\II\binom{n_1,\dots,n_r}{b_1,\dots,b_r}\in\mathcal{I}^1\otimes_\QQ\QQ(\eta)$$
for $n_1,\dots,n_r\ge1$ and $a_1,\dots,a_r\in\ZZ/N\ZZ$, where $\rho(\bm{a})=(a_1,a_2-a_1,\dots,a_r-a_{r-1})$.
\Cref{prop:zetaL} implies that $\IT$ corresponds to $\zeta$. Let
$$\widetilde{\Phi}_{w_1\cdots w_r}\binom{n_1,\dots,n_r}{a_1,\dots,a_r}\coloneq(\mu\otimes\mu)\left(\frac{(-1)^r}{N^r}\sum_{b_1,\dots,b_r\in\ZZ/N\ZZ}\eta^{\rho(\bm{a})\cdot\bm{b}}\Phi_{w_1\cdots w_r}\binom{n_1,\dots,n_r}{b_1,\dots,b_r}\right).$$
By \Cref{lem:DeltaPhi}, we have
$$\Delta_\mu\left(\IT\binom{n_1,\dots,n_r}{a_1,\dots,a_r}\right)=\sum_{w_1,\dots,w_r\in\{\xx,\yy\}}\widetilde{\Phi}_{w_1\cdots w_r}\binom{n_1,\dots,n_r}{a_1,\dots,a_r}.$$
The following proposition gives us the explicit formula for the Goncharov coproduct of formal iterated integrals corresponding to MZVs of level $N$.

\begin{prop}\label{prop:phitilde}
For any $n_1,\dots,n_r\ge1$, $a_1,\dots,a_r\in\ZZ/N\ZZ$ and $w_1,\dots,w_r\in\{\xx,\yy\}$, we have
\begin{align}
&\widetilde{\Phi}_{w_1\cdots w_r}\binom{n_1,\dots,n_r}{a_1,\dots,a_r}=\left(\IT\binom{n_1,\dots,n_{t_1-1}}{a_1,\dots,a_{t_1-1}}\otimes1\right)\\
&\times\rising{5}{$\sum_{\substack{t_1\le q_1\le t_2-1\\[-5pt]\vdots\\t_h\le q_h\le r}}$}\rising{5}{$\sum_{\substack{k_{t_j}+\cdots+k_{t_{j+1}-1}\\=n_{t_j}+\cdots+n_{t_{j+1}-1}\\(1\le j\le h),k_i\ge1}}$}\prod_{j=1}^h\Bigg\{(-1)^{l_j}\Bigg(\prod_{\substack{p=t_j\\p\ne q_j}}^{t_{j+1}-1}\binom{k_p-1}{n_p-1}\Bigg)\\
&\times\IT\bigg(\begin{array}{@{}c@{}c@{}c@{}}k_{q_j-1}&,\dots,&k_{t_j}\\a_{q_j}-a_{q_j-1}&,\dots,&a_{q_j}-a_{t_j}\end{array}\bigg)\IT\bigg(\begin{array}{@{}c@{}c@{}c@{}}k_{q_j+1}&,\dots,&k_{t_{j+1}-1}\\a_{q_j+1}-a_{q_j}&,\dots,&a_{t_{j+1}-1}-a_{q_j}\end{array}\bigg)\Bigg\}\otimes\IT\binom{k_{q_1},\dots,k_{q_h}}{a_{q_1},\dots,a_{q_h}},
\end{align}
where $l_j=n_{t_j}+\cdots+n_{t_{j+1}-1}+n_{q_j}+k_{q_j+1}+\cdots+k_{q_{j+1}-1}$, $t_{h+1}=r+1$ and $a_{r+1}=0$.
\end{prop}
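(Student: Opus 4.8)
The plan is to insert the explicit expression of \Cref{lem:phi} into the definition of $\widetilde{\Phi}$ and then carry out the discrete Fourier summation built into $\widetilde{\Phi}$ and into $\IT$. First I would expand $\Phi_{w_1\cdots w_r}\binom{n_1,\dots,n_r}{b_1,\dots,b_r}$ by \Cref{lem:phi} and apply $\mu\otimes\mu$. Since $\mu$ is an algebra homomorphism it distributes over every product occurring in $\Phi$ and replaces each factor $\II_{b_t}\binom{\,\cdots\,}{c_1,\dots,c_s}$ by $\II\binom{\,\cdots\,}{c_1-b_t,\dots,c_s-b_t}$, while it leaves the right tensor slot $\II\binom{k_{q_1},\dots,k_{q_h}}{b_{t_1},\dots,b_{t_h}}$ unchanged, its base point being already $0$. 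The signs $(-1)^{l_j}$ and the binomial products in \Cref{lem:phi} do not depend on $\bm b$, so they factor out of $\sum_{\bm b}$ and reproduce the coefficients of the claim verbatim; everything then reduces to showing that $\frac{(-1)^r}{N^r}\sum_{\bm b}\eta^{\rho(\bm a)\cdot\bm b}(\cdots)$ splits into the advertised product of $\IT$'s.

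The core of the argument is to match this single sum over $b_1,\dots,b_r$ with the separate defining sums of the individual $\IT$-factors. I would isolate the marked variables $b_{t_1},\dots,b_{t_h}$ surviving in the right tensor slot and partition the remaining indices into the groups imposed by the block decomposition: the initial segment $1\le i\le t_1-1$, and inside block $j$ the two sub-blocks $t_j<i\le q_j$ and $q_j<i<t_{j+1}$. Writing $\rho(\bm a)\cdot\bm b=\sum_{i=1}^r(a_i-a_{i-1})b_i$ with $a_0=0$ and performing the bijective change of variables $c_i=b_i-b_{t_1}$ on the initial segment and $c_i=b_i-b_{t_j}$, $c_i=b_i-b_{t_{j+1}}$ on the two sub-blocks of block $j$ — precisely the differences produced by $\mu$ — each group telescopes to the phase attached to its $\IT$-factor: the initial segment gives $\rho(a_1,\dots,a_{t_1-1})\cdot\bm c$, and the two sub-blocks of block $j$ give $\rho$ of the tuples $(a_{q_j}-a_{q_j-1},\dots,a_{q_j}-a_{t_j})$ and $(a_{q_j+1}-a_{q_j},\dots,a_{t_{j+1}-1}-a_{q_j})$ dotted with their variables. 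Collecting, for each $j$, the coefficient of $b_{t_j}$ from the original phase together with all the shift contributions yields $a_{q_j}-a_{q_{j-1}}$ (with $a_{q_0}=0$), which is exactly the $j$-th entry of $\rho(a_{q_1},\dots,a_{q_h})$; hence the residual phase on the marked variables is $\rho(a_{q_1},\dots,a_{q_h})\cdot(b_{t_1},\dots,b_{t_h})$. The sum then factors into independent sums, each collapsing to the corresponding $\IT$ by definition, and the normalization is automatic because the depths $(t_1-1)+\sum_{j=1}^h(t_{j+1}-t_j-1)+h$ total $r$, so the individual prefactors $\frac{(-1)^d}{N^d}$ multiply to $\frac{(-1)^r}{N^r}$.

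I expect the main obstacle to be the telescoping of the reversed left sub-block. Because of the path reversal in \Cref{lem:G1}, the superscripts $k_{q_j-1},\dots,k_{t_j}$ there are paired in decreasing order with the indices $a_{q_j},\dots,a_{t_j+1}$, and one must check that applying $\rho$ to the reversed difference tuple $(a_{q_j}-a_{q_j-1},\dots,a_{q_j}-a_{t_j})$ and pairing it with the correspondingly reversed summation variables returns the straight block sum $\sum_{t_j<i\le q_j}(a_i-a_{i-1})c_i$; only then do the block phase and the defining phase of the factor $\IT$ with upper indices $k_{q_j-1},\dots,k_{t_j}$ and lower indices $a_{q_j}-a_{q_j-1},\dots,a_{q_j}-a_{t_j}$ coincide. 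A second, more conceptual point is that the lower indices of the marked integral in \Cref{lem:phi} record the positions $t_j$ whereas the claimed answer records $a_{q_j}$; this relabeling must arise solely from the recombination of the leftover block phases computed above, and not from any rewriting of the integrals themselves. Once the phase identity $\sum_{i=1}^r(a_i-a_{i-1})b_i=\sum_{\mathrm{groups}}\rho(\cdots)\cdot\bm c+\rho(a_{q_1},\dots,a_{q_h})\cdot(b_{t_1},\dots,b_{t_h})$ has been verified, the factorization of the sum, and with it the proposition, follows.
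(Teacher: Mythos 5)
Your proposal is correct and follows essentially the same route as the paper: expand via \Cref{lem:phi}, apply $\mu\otimes\mu$, and then perform an affine change of the running indices $b_1,\dots,b_r$ so that the single discrete Fourier sum factors into the defining sums of the individual $\IT$-factors, the paper's substitution $b_k\mapsto b_k+b_{q_j}$ (etc.) being just a reparametrization of your $c_i=b_i-b_{t_j}$. The phase bookkeeping you flag as the main obstacle (the reversed left sub-block and the relabeling $a_{t_j}\rightsquigarrow a_{q_j}$) does work out exactly as you predict, yielding the residual phase $\rho(a_{q_1},\dots,a_{q_h})\cdot(b_{t_1},\dots,b_{t_h})$.
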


\begin{proof}
It follows from \Cref{lem:phi} and variable substitution.
\end{proof}

We define the $\QQ$-algebra generated by formal iterated integrals corresponding to MZVs $\widetilde{\mathcal{I}^1}$ by
$$\widetilde{\mathcal{I}^1}\coloneq\left\langle\IT\binom{n_1,\dots,n_r}{a_1,\dots,a_r}\relmid r\ge0,n_1,\dots,n_r\ge1,a_1,\dots,a_r\in\ZZ/N\ZZ\right\rangle_\QQ.$$
$\widetilde{\mathcal{I}^1}$ is isomorphic to $(\ha^1,\tsha)$ as a $\QQ$-algebra by sending $\IT\binom{n_1,\dots,n_r}{a_1,\dots,a_r}$ to $z_{n_1,a_1}\cdots z_{n_r,a_r}$. In other words, we can equip $(\ha^1,\tsha)$ with a Hopf algebra structure via this identification.

Comparing \Cref{prop:Eisenstein} and \Cref{prop:phitilde}, gives one of our main results.

\newtheorem*{main1}{\Cref{thm:main1}}
\begin{main1}
For any $w\in\ha^2$, we have
$$G(w;\tau)=(\zeta\star g)(w;q).$$
\end{main1}

\section{Shuffle regularization for MES of level $N$}\label{sec:sha}

In this section, we construct the shuffle regularized MES of level $N$ by using the $\tsha$-homomorphism $g^{\tsha}$. 

\subsection{Shuffle regularization for multiple divisor function}\label{subsection:regg}

Kitada constructed shuffle regularized multiple divisor function in his master thesis \cite{Kitada23}. The thesis is not available and written in Japanese, so we introduce his results in this subsection.

\begin{dfn}[Kitada \cite{Kitada23}]
For $n_1,\dots,n_r\ge1$ and $a_1,\dots,a_r\in\ZZ/N\ZZ$, we define $H,g\in\CC\llbracket q\rrbracket\llbracket x_1,\dots,x_r\rrbracket$ by
\begin{gather}
H\begin{pmatrix}n_1,\dots,n_r\\a_1,\dots,a_r\\x_1,\dots,x_r\end{pmatrix}\coloneq\sum_{0<d_1<\cdots<d_r}\prod_{j=1}^re^{d_jx_j}\eta^{a_jd_j}\left(\frac{q^{d_j}}{1-q^{d_j}}\right)^{n_j},\\
g\binom{a_1,\dots,a_r}{x_1,\dots,x_r}\coloneq\sum_{k_1,\dots,k_r>0}\left(\frac{N}{-2\pi\sqrt{-1}}\right)^{k_1+\cdots+k_r}\!g\vvec{k_1,\dots,k_r}{a_1,\dots,a_r}{q}x_1^{k_1-1}\cdots x_r^{k_r-1}.
\end{gather}
\end{dfn}

\begin{lem}[\cite{Kitada23}]\label{lem:gH}
For any $a_1,\dots,a_r\in\ZZ/N\ZZ$, we have
$$g\binom{a_1,\dots,a_r}{x_1,\dots,x_r}=H\left(\begin{array}{@{}c@{}c@{}c@{}l@{}}1&,\dots,&1&,1\\a_r-a_{r-1}&,\dots,&a_2-a_1&,a_1\\x_r-x_{r-1}&,\dots,&x_2-x_1&,x_1\end{array}\right).$$
\end{lem}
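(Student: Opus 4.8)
The plan is to expand each side into a single explicit multiple $q$-series indexed by two families of positive integers and to verify that the two series coincide coefficient-wise; since everything lives in $\mathcal{K}\llbracket x_1,\dots,x_r\rrbracket$, all the interchanges of summation below are purely formal and need no analytic justification. First I would rewrite the left-hand side. Substituting \Cref{dfn:div} into the generating series $g\binom{a_1,\dots,a_r}{x_1,\dots,x_r}$, the prefactors $\left(\frac{N}{-2\pi\sqrt{-1}}\right)^{k_1+\cdots+k_r}$ and $\left(\frac{-2\pi\sqrt{-1}}{N}\right)^{k_1+\cdots+k_r}$ cancel, and after interchanging the sum over $k_1,\dots,k_r$ with the lattice sum one applies $\sum_{k_i>0}\frac{c_i^{k_i-1}}{(k_i-1)!}x_i^{k_i-1}=e^{c_ix_i}$ for each $i$. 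This gives
$$g\binom{a_1,\dots,a_r}{x_1,\dots,x_r}=\sum_{\substack{0<d_1<\cdots<d_r\\c_1,\dots,c_r>0}}\prod_{i=1}^re^{c_ix_i}\eta^{a_ic_i}q^{c_id_i}.$$

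Next I would expand the right-hand side. Since every upper index in the asserted $H(\cdots)$ equals $1$, I write $\frac{q^{e_j}}{1-q^{e_j}}=\sum_{f_j>0}q^{f_je_j}$ and reverse the summation index by $j\mapsto r-j+1$, so that the strictly increasing variables $0<e_1<\cdots<e_r$ become a strictly decreasing tuple $e'_1>\cdots>e'_r>0$, while the entries $a_r-a_{r-1},\dots,a_2-a_1,a_1$ (respectively the $x$-entries) become $a_i-a_{i-1}$ (respectively $x_i-x_{i-1}$) under the convention $a_0=x_0=0$. I then perform two independent Abel (partial) summations: set $c_i=e'_i-e'_{i+1}$ with $e'_{r+1}=0$, and $d_l=f'_1+\cdots+f'_l$, where $f'_i$ is the reindexed multiplicity.

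The computations that make this work are the telescoping identity $\sum_{i=1}^r(x_i-x_{i-1})e'_i=\sum_{i=1}^rc_ix_i$ (and the identical one with $a$ in place of $x$), which collapses the exponential and root-of-unity factors to $\prod_ie^{c_ix_i}\eta^{a_ic_i}$, together with $\sum_{i=1}^rf'_ie'_i=\sum_{l=1}^rc_ld_l$ (using $e'_i=\sum_{l\ge i}c_l$), which turns $\prod_jq^{f_je_j}$ into $\prod_lq^{c_ld_l}$. Both substitutions are bijections of index sets: $(e'_i)$ strictly decreasing and positive corresponds to $(c_i)$ all positive, and $(f'_i)$ all positive corresponds to $(d_l)$ strictly increasing and positive. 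Hence $H(\cdots)$ equals exactly the multiple series displayed above, which proves the claim.

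The step I expect to be the main obstacle is the bookkeeping in the index reversal together with the two partial summations: I must check that threading $j\mapsto r-j+1$ through the difference entries really reproduces $a_i-a_{i-1}$ with $a_0=0$, and that the bijections $(e'_i)\leftrightarrow(c_i)$ and $(f'_i)\leftrightarrow(d_l)$ produce precisely the constraints $0<d_1<\cdots<d_r$ and $c_1,\dots,c_r>0$ appearing on the $g$-side, with no stray sign or off-by-one in the exponents. Everything else is a routine formal manipulation of power series.
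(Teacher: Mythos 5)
Your proof is correct and follows essentially the same route as the paper: both sides are expanded into the explicit multiple $q$-series $\sum_{0<d_1<\cdots<d_r,\,c_i>0}\prod_i e^{c_ix_i}\eta^{a_ic_i}q^{c_id_i}$, and the right-hand side is matched to it by the index reversal together with the substitution exchanging strictly increasing tuples with tuples of positive differences (your maps $(e'_i)\leftrightarrow(c_i)$ and $(f'_i)\leftrightarrow(d_l)$ are exactly the inverse of the paper's replacement $d_j=c'_{r-j+1}+\cdots+c'_r$, $c_j=d'_{r-j+1}-d'_{r-j}$). The telescoping identities you single out as the main bookkeeping check do hold, so there is no gap.
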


\begin{proof}
By \Cref{dfn:div}, we have
\begin{align}
(\text{L.H.S})=\sum_{\substack{0<d_1<\cdots< d_r\\k_1,\dots,k_r>0\\c_1,\dots,c_r>0}}\prod_{j=1}^r\frac{(c_jx_j)^{k_j-1}}{(k_j-1)!}\eta^{a_jc_j}q^{c_jd_j}=\sum_{\substack{0<d_1<\cdots<d_r\\c_1,\dots,c_r>0}}\prod_{j=1}^re^{c_jx_j}\eta^{a_jc_j}q^{c_jd_j}.
\end{align}
On the other hand, we have
\begin{align}
(\text{R.H.S})&=\sum_{\substack{0<d_1<\cdots<d_r\\c_1,\dots,c_r>0}}\prod_{j=1}^re^{(d_{r-j+1}-d_{r-j})x_j}\eta^{(d_{r-j+1}-d_{r-j})a_j}q^{c_jd_j}.
\end{align}
Replacing
$$d_j=c_{r-j+1}^\prime+\cdots+c_r^\prime,\quad c_j=d_{r-j+1}^\prime-d_{r-j}^\prime\quad(j\in\{1,\dots,r\}),$$
we have
\begin{align}
\sum_{\substack{0<d_1<\cdots<d_r\\c_1,\dots,c_r>0}}\prod_{j=1}^re^{(d_{r-j+1}-d_{r-j})x_j}\eta^{(d_{r-j+1}-d_{r-j})a_j}q^{c_jd_j}=\sum_{\substack{c_1^\prime,\dots,c_r^\prime>0\\0<d_1^\prime<\cdots<d_r^\prime}}\prod_{j=1}^re^{c_j^\prime x_j}\eta^{a_jc_j^\prime}q^{c_j^\prime d_j^\prime}.
\end{align}
\end{proof}

Let $\mathcal{U}$ be the non-commutative polynomial ring defined by
$$\mathcal{U}\coloneq\QQ\left\langle\begin{pmatrix}n\\a\\z\end{pmatrix}\relmid n\in\ZZ_{>0},a\in\ZZ/N\ZZ,z\in X\right\rangle,$$
where the set $X$ is given by
$$X\coloneq\left\{\sum_{i>0}m_ix_i\relmid m_i\in\ZZ_{\ge0},m_i=0\text{ for almost all $i$}\right\}.$$
The concatenation, harmonic and shuffle products on $\mathcal{U}$ extend analogously to the third line of indicies.
Note that $H$ satisfies the $\ast$-product. Define the exponential map on $\mathcal{U}$ by
\begin{align}
\exp\left(\begin{pmatrix}n_1,\dots,n_r\\a_1,\dots,a_r\\z_1,\dots,z_r\end{pmatrix}\right)\coloneq\sum_{\substack{1\le m\le r\\i_1+\cdots+i_m=r\\i_1,\dots,i_m>0}}\frac{1}{i_1!\cdots i_m!}\begin{pmatrix}n^\prime_{i_1},\dots,n^\prime_{i_m}\\a^\prime_{i_1},\dots,a^\prime_{i_m}\\z^\prime_{i_1},\dots,z^\prime_{i_m}\end{pmatrix},
\end{align}
where $p^\prime_{i_k}=p_{i_1+\cdots+i_{k-1}+1}+\cdots+p_{i_1+\cdots+i_k}$ for $p\in\{n,a,z\}$ and $k=1,\dots,m$.

\begin{prop}[Hoffman \cite{Hoff00}]
$\mathcal{U}_{\sha}$ and $\mathcal{U}_\ast$ are commutative $\QQ$-algebras, and we have an isomorphism between them:
$$\exp:\mathcal{U}_{\sha}\xlongrightarrow{\sim}\mathcal{U}_{\ast}.$$
\end{prop}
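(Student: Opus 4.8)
The plan is to recognize the proposition as a direct instance of Hoffman's general quasi-shuffle formalism \cite{Hoff00} and to reduce it to checking that our alphabet carries the structure his theorem requires. The essential object is the \emph{diamond} operation on letters,
\[
\begin{pmatrix}n_1\\a_1\\z_1\end{pmatrix}\diamond\begin{pmatrix}n_2\\a_2\\z_2\end{pmatrix}:=\begin{pmatrix}n_1+n_2\\a_1+a_2\\z_1+z_2\end{pmatrix},
\]
namely componentwise addition. First I would verify that $\diamond$ is a well-defined, commutative and associative binary operation on the generating set of $\mathcal{U}$: the $n$-coordinate lives in $\ZZ_{>0}$, the $a$-coordinate in the abelian group $\ZZ/N\ZZ$, and the $z$-coordinate in $X$, each of which is an abelian monoid closed under addition. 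Commutativity and associativity are then inherited coordinatewise, and the crucial point is merely that the sum never leaves the alphabet. With this $\diamond$ the recursion defining $\ast$ is precisely Hoffman's quasi-shuffle product, its third term being $(u_1\diamond u_2)(w_1\ast w_2)$, while $\sha$ is the ordinary shuffle.

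Granted the diamond structure, the assertion that $\mathcal{U}_\sha$ and $\mathcal{U}_\ast$ are commutative $\QQ$-algebras is part of Hoffman's foundational results; alternatively it can be obtained by a routine induction on the total length $l(w_1)+l(w_2)$ from the defining recursions, where the only nontrivial verification is the associativity of $\ast$, which uses exactly the associativity of $\diamond$.

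For the isomorphism I would invoke Hoffman directly: the map $\exp$ displayed above is his exponential, and \cite{Hoff00} shows that it is a $\QQ$-algebra isomorphism $\mathcal{U}_\sha\xrightarrow{\sim}\mathcal{U}_\ast$ whose inverse is the corresponding $\log$. If one prefers a self-contained argument, the heart of the matter is the homomorphism identity
\[
\exp(w_1\sha w_2)=\exp(w_1)\ast\exp(w_2),
\]
which is proved by comparing both sides after expanding via the defining recursions, the factorials $1/(i_1!\cdots i_m!)$ being precisely what is required for the contraction (diamond) terms produced by $\ast$ to match the regroupings of letters produced by $\exp$. Bijectivity is then formal, since $\exp$ acts as the identity plus terms that strictly decrease the number of letters, so it is triangular with respect to word length and hence invertible, the inverse $\log$ being assembled from the same composition combinatorics with alternating signs.

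The main obstacle is bookkeeping rather than anything conceptual. The one genuine thing to check is that $\diamond$ stays inside the alphabet---above all that the sum of two positive $n$-coordinates remains in $\ZZ_{>0}$ and that the sum of two elements of $X$ remains in $X$---and that the $\ast$-recursion stated here matches Hoffman's convention verbatim. Once these are confirmed the proposition follows at once from \cite{Hoff00}.
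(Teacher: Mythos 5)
Your proposal is correct and matches the paper's treatment: the paper gives no proof of this proposition at all, simply attributing it to Hoffman, and your reduction to Hoffman's quasi-shuffle formalism (with the diamond operation being componentwise addition on the alphabet, whose closure is the only point needing verification) is exactly the intended justification. The additional details you supply --- the unitriangularity of $\exp$ with respect to word length and the homomorphism identity $\exp(w_1\sha w_2)=\exp(w_1)\ast\exp(w_2)$ --- are accurate and consistent with Hoffman's argument.
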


\begin{dfn}[\cite{Kitada23}]
For $a_1,\dots,a_r\in\ZZ/N\ZZ$, we define $h\binom{a_1,\dots,a_r}{x_1,\dots,x_r}\in\CC\llbracket q\rrbracket\llbracket x_1,\dots,x_r\rrbracket$ by
\begin{align}
h\binom{a_1,\dots,a_r}{x_1,\dots,x_r}&\coloneq H\circ\exp\left(\begin{pmatrix}1,\dots,1\\a_1,\dots,a_r\\x_1,\dots,x_r\end{pmatrix}\right)\\
&=\sum_{\substack{1\le m\le r\\i_1+\cdots+i_m=r\\i_1,\dots,i_m>0}}\frac{1}{i_1!\cdots i_m!}H\begin{pmatrix}n^\prime_{i_1},\dots,n^\prime_{i_m}\\a^\prime_{i_1},\dots,a^\prime_{i_m}\\z^\prime_{i_1},\dots,z^\prime_{i_m}\end{pmatrix}.
\end{align}
\end{dfn}

\begin{lem}[\cite{Kitada23}]
For any $a_1,\dots,a_{r+s}\in\ZZ/N\ZZ$, we have
$$h\binom{a_1,\dots,a_r}{x_1,\dots,x_r}h\binom{a_{r+1},\dots,a_{r+s}}{x_{r+1},\dots,x_{r+s}}=\left.h\binom{a_1,\dots,a_{r+s}}{x_1,\dots,x_{r+s}}\relmid\mathrm{sh}^{(r+s)}_r\right.,$$
where
$$\mathrm{sh}^{(r+s)}_r\coloneq\sum_{\sigma\in\mathrm{Sh}^{(r+s)}_r}\sigma\in\ZZ[\mathfrak{S}_{r+s}],$$
and the action extends to an action of the group ring $\ZZ[\mathfrak{S}_{r+s}]$ by linearity.
\end{lem}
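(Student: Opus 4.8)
The plan is to recognize $h$ as a $\sha$-algebra homomorphism and then to read off the statement by expanding a shuffle product of two ``all-ones'' words. First I would note that the two structural facts already recorded make $h=H\circ\exp$ an algebra homomorphism from $\mathcal{U}_\sha$ to $\mathcal{K}\llbracket x_1,\dots\rrbracket$: since $\exp$ is an isomorphism $\mathcal{U}_\sha\to\mathcal{U}_\ast$ and $H$ satisfies the $\ast$-product, for all $u,v\in\mathcal{U}$ one gets
\begin{align}
h(u\sha v)=H(\exp(u\sha v))=H(\exp(u)\ast\exp(v))=H(\exp(u))H(\exp(v))=h(u)h(v).
\end{align}
Thus $h$ carries $\sha$-products to ordinary products of power series.

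Next I would identify the two factors on the left-hand side as values of $h$ on explicit words of $\mathcal{U}$. Abbreviating $U_i=\begin{pmatrix}1\\a_i\\x_i\end{pmatrix}\in\mathcal{U}$, the definition of $h$ reads $h\binom{a_1,\dots,a_r}{x_1,\dots,x_r}=h(U_1\cdots U_r)$ and likewise for the second factor, so the homomorphism property gives
\begin{align}
h\binom{a_1,\dots,a_r}{x_1,\dots,x_r}h\binom{a_{r+1},\dots,a_{r+s}}{x_{r+1},\dots,x_{r+s}}=h\big((U_1\cdots U_r)\sha(U_{r+1}\cdots U_{r+s})\big).
\end{align}
Because the $\sha$-product on $\mathcal{U}$ is the plain shuffle, this shuffle equals $\sum_{\sigma\in\mathrm{Sh}^{(r+s)}_r}U_{\sigma^{-1}(1)}\cdots U_{\sigma^{-1}(r+s)}$, and each summand is again an all-ones word. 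Applying the linear map $h$ termwise turns the summand indexed by $\sigma$ into $h\binom{a_{\sigma^{-1}(1)},\dots,a_{\sigma^{-1}(r+s)}}{x_{\sigma^{-1}(1)},\dots,x_{\sigma^{-1}(r+s)}}$, which is precisely $\sigma$ acting on $h\binom{a_1,\dots,a_{r+s}}{x_1,\dots,x_{r+s}}$; summing over $\sigma\in\mathrm{Sh}^{(r+s)}_r$ reproduces the right-hand side $h\binom{a_1,\dots,a_{r+s}}{x_1,\dots,x_{r+s}}\mid\mathrm{sh}^{(r+s)}_r$.

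The step I expect to require the most care is the bookkeeping of conventions: I must verify that the permutation action encoded by the symbol $\mid\mathrm{sh}^{(r+s)}_r$ on the slots of $h$ coincides with the indexing $U_{\sigma^{-1}(i)}$ produced by the shuffle product formula, and confirm that shuffling two all-ones words never leaves the all-ones words, so that every summand lies in the domain on which the abbreviation $h\binom{\cdots}{\cdots}$ is defined. Granting these matches, the identity is an immediate consequence of the $\sha$-homomorphy of $h$, with no further computation needed.
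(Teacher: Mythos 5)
Your proposal is correct and follows essentially the same route as the paper: both arguments rest on the observation that $H$ is a $\ast$-homomorphism and $\exp:\mathcal{U}_\sha\to\mathcal{U}_\ast$ is an algebra isomorphism, so that $h=H\circ\exp$ is a $\sha$-homomorphism, after which the identity is just $h$ applied to the shuffle of two all-ones words. Your write-up merely makes explicit the final bookkeeping (expanding the shuffle over $\mathrm{Sh}^{(r+s)}_r$ and matching it with the permutation action) that the paper leaves implicit.
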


\begin{proof}
Since $H$ satisfies the harmonic product, the map
$$H:\mathcal{U}_*\to\varinjlim_{r}\CC\llbracket q\rrbracket\llbracket x_1,\dots,x_r\rrbracket:\begin{pmatrix}n_1,\dots,n_r\\a_1,\dots,a_r\\z_1,\dots,z_r\end{pmatrix}\mapsto H\begin{pmatrix}n_1,\dots,n_r\\a_1,\dots,a_r\\z_1,\dots,z_r\end{pmatrix}$$
is a homomorphism. Clearly, we have
$$h\binom{a_1,\dots,a_r}{x_1,\dots,x_r}=H\circ\exp\left(\begin{pmatrix}1,\dots,1\\a_1,\dots,a_r\\x_1,\dots,x_r\end{pmatrix}\right).$$
Therefore, $h:\mathcal{U}_\sha\to\CC\llbracket q\rrbracket\llbracket x_1,\dots,x_r\rrbracket$ is a homomorphism.
\end{proof}

The following lemma provides a characterization of the product $\tsha$ through generating functions.

\begin{lem}\label{lem:tsha}
Let $F\in\CC\llbracket q\rrbracket\llbracket x_1,\dots,x_r\rrbracket$ and $f\in\CC\llbracket q\rrbracket$ satisfying
$$F\binom{a_1,\dots,a_r}{x_1,\dots,x_r}=\sum_{k_1,\dots,k_r>0}f\binom{k_1,\dots,k_r}{a_1,\dots,a_r}x_1^{k_1-1}\cdots x_r^{k_r-1}.$$
Then, the following statements are equivalent:
\begin{enumerate}
\renewcommand{\labelenumi}{(\roman{enumi})}
\item $f\in\CC\llbracket q\rrbracket$ satisfies the $\tsha$-product i.e. it holds
$$f\binom{k_1,\dots,k_r}{a_1,\dots,a_r}f\binom{k_{r+1},\dots,k_{r+s}}{a_{r+1},\dots,a_{r+s}}=f\left(\binom{k_1,\dots,k_r}{a_1,\dots,a_r}\tsha\binom{k_{r+1},\dots,k_{r+s}}{a_{r+1},\dots,a_{r+s}}\right).$$
for any $k_1,\dots,k_{r+s}\in\ha^1$.
\item For any positive integers $r,s>0$ and $a_1,\dots,a_{r+s}\in\ZZ/N\ZZ$, it holds
$$F^\#\binom{a_1,\dots,a_r}{x_1,\dots,x_r}F^\#\binom{a_{r+1},\dots,a_{r+s}}{x_{r+1},\dots,x_{r+s}}=\left.F^\#\binom{a_1,\dots,a_{r+s}}{x_1,\dots,x_{r+s}}\relmid\mathrm{sh}^{(r+s)}_r\right..$$
Here, $F^\#\binom{a_1,\dots,a_r}{x_1,\dots,x_r}\coloneq F^\#\binom{a_1,a_1+a_2,\dots,a_1+\cdots+a_r}{x_1,x_1+x_2,\dots,x_1+\cdots+x_r}$.
\end{enumerate}
\end{lem}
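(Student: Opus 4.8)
The plan is to strip the definition of $\tsha$ down to the ordinary shuffle and then recognize condition (ii) as the generating-function incarnation of that shuffle. Since $\tsha=\rho^{-1}\circ\sha\circ(\rho\otimes\rho)$ and $\rho$ is a $\QQ$-linear bijection of $\ha^1$, I set $g\coloneq f\circ\rho^{-1}$. Then (i) becomes the statement that $g$ is an ordinary $\sha$-homomorphism: indeed $f(u)f(v)=g(\rho u)g(\rho v)$ while $f(u\tsha v)=g\bigl(\rho u\sha\rho v\bigr)$, and as $\rho u,\rho v$ run over all of $\ha^1$ the two expressions agree for all $u,v$ exactly when $g(U)g(V)=g(U\sha V)$ for all $U,V$. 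This reduction is transparent on generating functions: because $\rho^{-1}$ replaces colors by their partial sums, one checks directly from the defining expansion of $F$ that $F^\#\binom{a_1,\dots,a_r}{x_1,\dots,x_r}=G\binom{a_1,\dots,a_r}{x_1,\,x_1+x_2,\dots,x_1+\cdots+x_r}$, where $G$ is the generating function of $g$ and the colors of $G$ already encode the partial sums inside $f$. Thus the lemma reduces to one clean assertion: $g$ is a $\sha$-homomorphism on $\ha^1$ if and only if its partial-sum generating function $F^\#$ satisfies the column-shuffle relation of (ii).

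To prove this assertion I would compare coefficients of $x_1^{n_1-1}\cdots x_{r+s}^{n_{r+s}-1}$ on the two sides. The left-hand side contributes the product $g(u)g(v)$ of the two input values, so the content is that expanding the right-hand side reproduces precisely $g(u\sha v)$. The mechanism is the binomial theorem: a factor $(x_{\sigma(1)}+\cdots+x_{\sigma(i)})^{k_i-1}$ arising from the partial-sum substitution distributes its exponent across the earlier slots with binomial weights, and these are exactly the weights that appear when the letter-level shuffle is expanded back into $z$-words. The prototype is depth $(1,1)$: one has $z_{n,a}\sha z_{m,b}=\sum_{p=1}^{n}\binom{n+m-1-p}{m-1}z_{p,a}z_{n+m-p,b}$ plus the symmetric term, and matching this against the coefficient of $x_1^{n-1}x_2^{m-1}$ in $G\binom{a_1,a_2}{x_1,x_1+x_2}+G\binom{a_2,a_1}{x_2,x_2+x_1}$ makes the binomials agree on the nose via $(x_1+x_2)^{s-1}=\sum_l\binom{s-1}{l}x_1^{s-1-l}x_2^{l}$. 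Since equality of formal power series is equality of all coefficients, a successful general match settles both implications at once.

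The hard part will be making this correspondence precise in arbitrary depth, because the two shuffles involved are genuinely of different natures: the product $\sha$ on $\ha^1$ interleaves the individual letters $x$ and $y_a$, whereas the relation in (ii) permutes whole columns $\binom{a_i}{x_i}$ through $\mathrm{Sh}^{(r+s)}_r$. I would bridge them by decomposing an arbitrary interleaving of the two letter-sequences into two independent pieces: first, the relative order of the heads $y_{a_1},\dots,y_{a_r},y_{b_1},\dots,y_{b_s}$, which is governed by a shuffle permutation $\sigma\in\mathrm{Sh}^{(r+s)}_r$ and produces the column-shuffle; and second, the placement of the intervening $x$-letters into the gaps between consecutive heads. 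Summing over the second piece is exactly what the partial-sum substitution performs, since the $x$-letters available before the $i$-th merged head are those of the first $i$ columns in the order fixed by $\sigma$. Carrying out this bijection with the correct bookkeeping of which $x$-letters may legitimately interleave — equivalently, proving the identity by induction on $r+s$ by feeding the recursion (S2) through the generating-function relation — is the delicate point; once it is in place, the coefficient comparison of the previous paragraph closes the lemma.
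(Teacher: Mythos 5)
Your proposal is correct and follows essentially the same route as the paper: both reduce the statement via $\rho$ (i.e.\ passing to $g=f\circ\rho^{-1}$, so that $F^\#$ becomes the partial-sum generating function of an ordinary $\sha$-homomorphism candidate) to the single assertion that the $\sha$-homomorphism property is equivalent to the column-shuffle identity for that generating function. The only difference is that the paper disposes of this last equivalence by citing Ihara--Kaneko--Zagier, Section~8, whereas you sketch the underlying combinatorics (head ordering giving $\mathrm{Sh}^{(r+s)}_r$, placement of the $x$-letters giving the binomial expansion of the partial sums) directly; your description of that mechanism is the standard correct one, so nothing is missing beyond writing it out in full.
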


\begin{proof}
Since
$$F^\#\binom{a_1,\dots,a_r}{x_1,\dots,x_r}=\sum_{k_1,\dots,k_r>0}f\left(\rho^{-1}\binom{k_1,\dots,k_1}{a_1,\dots,a_r}\right)x_1^{k_1-1}(x_1+x_2)^{k_2-1}\cdots(x_1+\cdots+x_r)^{k_r-1},$$
the statement (ii) is equivalent to
$$f\left(\rho^{-1}\binom{k_1,\dots,k_r}{a_1,\dots,a_r}\right)f\left(\rho^{-1}\binom{k_{r+1},\dots,k_{r+s}}{a_{r+1},\dots,a_{r+s}}\right)=f\left(\rho^{-1}\left(\binom{k_1,\dots,k_r}{a_1,\dots,a_r}\sha\binom{k_{r+1},\dots,k_{r+s}}{a_{r+1},\dots,a_{r+s}}\right)\right)$$
for any $k_1,\dots,k_{r+s}\ge1$ and $a_1,\dots,a_{r+s}\in\ZZ/N\ZZ$ (see Ihara--Kaneko--Zagier \cite{IKZ}, Section 8). By definition of $\tsha$ and bijectivity of $\rho$, this statement is equivalent to
$$f\binom{k_1,\dots,k_r}{a_1,\dots,a_r}f\binom{k_{r+1},\dots,k_{r+s}}{a_{r+1},\dots,a_{r+s}}=f\left(\binom{k_1,\dots,k_r}{a_1,\dots,a_r}\tsha\binom{k_{r+1},\dots,k_{r+s}}{a_{r+1},\dots,a_{r+s}}\right)$$
for any $k_1,\dots,k_{r+s}\ge1$ and $a_1,\dots,a_{r+s}\in\ZZ/N\ZZ$.
\end{proof}

\begin{dfn}[\cite{Kitada23}]
We define $g_{\tsha}\in\CC\llbracket q\rrbracket\llbracket x_1,\dots,x_r\rrbracket$ by
$$g_{\tsha}\binom{a_1,\dots,a_r}{x_1,\dots,x_r}\coloneq h\binom{a_r-a_{r-1},\dots,a_2-a_1,a_1}{x_r-x_{r-1},\dots,x_2-x_1,x_1},$$
and we define the shuffle regularization $g^{\tsha}:\ha^1\to\CC\llbracket q\rrbracket$ via the coefficients of the generating function:
$$\sum_{k_1,\dots,k_r>0}\left(\frac{N}{-2\pi\sqrt{-1}}\right)^{k_1+\cdots+k_r}\hspace{-5pt}g^{\tsha}\vvec{k_1,\dots,k_r}{a_1,\dots,a_r}{q}x_1^{k_1-1}\cdots x_r^{k_r-1}\coloneq g_{\tsha}\binom{a_1,\dots,a_r}{x_1,\dots,x_r}.$$
\end{dfn}

\begin{prop}[\cite{Kitada23}]
For any $n_1,\dots,n_{r+s}\ge1$ and $a_1,\dots,a_{r+s}\in\ZZ/N\ZZ$, we have
$$g^{\tsha}\binom{n_1,\dots,n_r}{a_1,\dots,a_r}g^{\tsha}\binom{n_{r+1},\dots,n_{r+s}}{a_{r+1},\dots,a_{r+s}}=g^{\tsha}\left(\binom{n_1,\dots,n_r}{a_1,\dots,a_r}\tsha\binom{n_{r+1},\dots,n_{r+s}}{a_{r+1},\dots,a_{r+s}}\right).$$
\end{prop}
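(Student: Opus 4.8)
The plan is to deduce the claim from \Cref{lem:tsha} applied to the generating function $F=g_{\tsha}$. By that lemma, the coefficients of $g_{\tsha}\binom{a_1,\dots,a_r}{x_1,\dots,x_r}$ satisfy the $\tsha$-product (statement (i)) if and only if the $\#$-shifted generating function $(g_{\tsha})^{\#}$ satisfies the shuffle identity (statement (ii)). Since the coefficients of $g_{\tsha}$ differ from $g^{\tsha}$ only by the scalar $\big(\tfrac{N}{-2\pi\sqrt{-1}}\big)^{k_1+\cdots+k_r}$, which is weight-additive and hence compatible with $\tsha$, proving (ii) for $g_{\tsha}$ is equivalent to the proposition. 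So the whole task reduces to verifying statement (ii) of \Cref{lem:tsha} for $F=g_{\tsha}$.

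First I would compute $(g_{\tsha})^{\#}$ explicitly. By definition of the $\#$-operation one substitutes the partial sums $a_i\mapsto a_1+\cdots+a_i$ and $x_i\mapsto x_1+\cdots+x_i$ into $g_{\tsha}$. In the defining formula $g_{\tsha}\binom{a_1,\dots,a_r}{x_1,\dots,x_r}=h\binom{a_r-a_{r-1},\dots,a_2-a_1,a_1}{x_r-x_{r-1},\dots,x_2-x_1,x_1}$ the inner entries are consecutive differences, so after the substitution these differences of partial sums return the original entries read from last to first, and one finds
$$(g_{\tsha})^{\#}\binom{a_1,\dots,a_r}{x_1,\dots,x_r}=h\binom{a_r,\dots,a_1}{x_r,\dots,x_1}.$$
Thus $(g_{\tsha})^{\#}$ is nothing but $h$ read with its columns reversed.

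With this identification, statement (ii) becomes the assertion that $h$ precomposed with column-reversal remains multiplicative for the shuffle product $\mathrm{sh}^{(r+s)}_r$. Here I would invoke the shuffle relation for $h$ established above (Kitada's lemma $h(\cdots)h(\cdots)=h(\cdots)\relmid\mathrm{sh}^{(r+s)}_r$), together with the elementary fact that column-reversal $R$ is an \emph{automorphism} of the shuffle algebra: reversing each interleaving of two words $U,V$ yields a bijection onto the interleavings of $R(U)$ and $R(V)$, so $R(U\sha V)=R(U)\sha R(V)$. Concretely, after swapping the two factors of the product (legitimate since $\mathcal{K}$ is commutative) and applying the $h$-shuffle relation to the concatenation $\binom{a_{r+s},\dots,a_{r+1}}{x_{r+s},\dots,x_{r+1}}\binom{a_r,\dots,a_1}{x_r,\dots,x_1}=\binom{a_{r+s},\dots,a_1}{x_{r+s},\dots,x_1}$, both the left- and right-hand sides of (ii) collapse to one and the same sum over all interleavings of $(a_r,\dots,a_1)$ with $(a_{r+s},\dots,a_{r+1})$, whence they are equal.

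I expect the main obstacle to be exactly this last bookkeeping step. The $h$-shuffle lemma naturally produces the shuffle element $\mathrm{sh}^{(r+s)}_s$ acting on the reversed word, whereas (ii) is phrased with $\mathrm{sh}^{(r+s)}_r$ acting on the un-reversed word, and one must check that these two group-ring actions yield the same element of $\mathcal{K}\llbracket x_1,\dots,x_{r+s}\rrbracket$. This is precisely where the reversal-automorphism identity $R(U\sha V)=R(U)\sha R(V)$ does the real work, by matching the two families of interleavings. Once it is in place, the equivalence in \Cref{lem:tsha} delivers the $\tsha$-product for $g^{\tsha}$ immediately.
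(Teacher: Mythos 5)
Your proposal is correct and takes essentially the same route as the paper's proof: reduce to condition (ii) of \Cref{lem:tsha}, identify $g_{\tsha}^{\#}$ with $h$ read with its columns reversed, invoke the shuffle relation for $h$, and settle the reversal-versus-shuffle bookkeeping, which the paper expresses as the permutation identity $\tau_{r+s}\sigma\rho_{r,s}\in\mathrm{Sh}^{(r+s)}_r$ and you express as the equivalent fact that reversal is an automorphism of the shuffle product. No gaps.
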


\begin{proof}
Let $\rho_{r,s}\in\mathfrak{S}_{r+s}$, $\tau_r\in\mathfrak{S}_r$ be given by
$$\rho_{r,s}=\binom{1\;\cdots\;r\quad r+1\;\cdots\;r+s}{r\;\cdots\;1\quad r+s\;\cdots\;r+1},\quad\tau_{r}=\binom{1\;\cdots\;r}{r\;\cdots\;1}.$$
We have
\begin{align}
g_{\tsha}^\#\binom{a_1,\dots,a_r}{x_1,\dots,x_r}g_{\tsha}^\#\binom{a_{r+1},\dots,a_{r+s}}{x_{r+1},\dots,x_{r+s}}&=h\binom{a_r,\dots,a_1}{x_r,\dots,x_1}h\binom{a_{r+s},\dots,a_{r+1}}{x_{r+s},\dots,x_{r+1}}\\
&=h\binom{a_1,\dots,a_r}{x_1,\dots,x_r}h\binom{a_{r+1},\dots,a_{r+s}}{x_{r+1},\dots,x_{r+s}}\Big|\rho_{r,s}.
\end{align}
Since $h$ satisfies the $\sha$-product, we have
\begin{align}
h\binom{a_1,\dots,a_r}{x_1,\dots,x_r}h\binom{a_{r+1},\dots,a_{r+s}}{x_{r+1},\dots,x_{r+s}}\Big|\rho_{r,s}&=h\binom{a_1,\dots,a_{r+s}}{x_1,\dots,x_{r+s}}\Big|\mathrm{sh}_r^{(r+s)}\Big|\rho_{r,s}\\
&=g_{\tsha}^\#\binom{a_1,\dots,a_{r+s}}{x_1,\dots,x_{r+s}}\Big|\tau_{r+s}\Big|\mathrm{sh}_r^{(r+s)}\Big|\rho_{r,s}.
\end{align}
Since $\tau_{r+s}\sigma\rho_{r,s}\in\mathrm{Sh}^{(r+s)}_r$ for any $\sigma\in\mathrm{Sh}^{(r+s)}_r$, we have
$$g_{\tsha}^\#\binom{a_1,\dots,a_{r+s}}{x_1,\dots,x_{r+s}}\Big|\tau_{r+s}\Big|\mathrm{sh}_r^{(r+s)}\Big|\rho_{r,s}=g_{\tsha}^\#\binom{a_1,\dots,a_{r+s}}{x_1,\dots,x_{r+s}}\Big|\mathrm{sh}_r^{(r+s)}.$$
By \Cref{lem:tsha}, $g^{\tsha}$ satisfies the $\tsha$-product.
\end{proof}

In the convergent case, $g^{\tsha}$ coincides with $g$.

\begin{lem}[\cite{Kitada23}]\label{lem:gsha}
For $n_1,\dots,n_r\ge2$ and $a_1,\dots,a_r\in\ZZ/N\ZZ$, we have
$$g^{\tsha}\vvec{n_1,\dots,n_r}{a_1,\dots,a_r}{q}=g\vvec{n_1,\dots,n_r}{a_1,\dots,a_r}{q}.$$
\end{lem}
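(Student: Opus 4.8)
The plan is to compare the two generating functions $g_{\tsha}\binom{a_1,\dots,a_r}{x_1,\dots,x_r}$ and $g\binom{a_1,\dots,a_r}{x_1,\dots,x_r}$ directly, and to show that their coefficients of $x_1^{n_1-1}\cdots x_r^{n_r-1}$ coincide whenever every exponent $n_i-1$ is at least $1$. Since $g^{\tsha}$ and $g$ are, by definition, exactly those coefficients (up to the common factor $(\tfrac{-2\pi\sqrt{-1}}{N})^{n_1+\cdots+n_r}$), this coefficient-wise equality is precisely the assertion. So everything reduces to analyzing the difference between the two series.

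First I would unfold the definitions. By \Cref{lem:gH} the ordinary generating function is
$$g\binom{a_1,\dots,a_r}{x_1,\dots,x_r}=H\begin{pmatrix}1,\dots,1\\a_r-a_{r-1},\dots,a_1\\x_r-x_{r-1},\dots,x_1\end{pmatrix},$$
while $g_{\tsha}=h(\text{reversed indices})=H\circ\exp$ applied to the \emph{same} all-ones index:
$$g_{\tsha}\binom{a_1,\dots,a_r}{x_1,\dots,x_r}=\sum_{\substack{1\le m\le r\\i_1+\cdots+i_m=r\\i_1,\dots,i_m>0}}\frac{1}{i_1!\cdots i_m!}\,H\begin{pmatrix}i_1,\dots,i_m\\b'_{i_1},\dots,b'_{i_m}\\y'_{i_1},\dots,y'_{i_m}\end{pmatrix},$$
where $b_j=a_{r-j+1}-a_{r-j}$ and $y_j=x_{r-j+1}-x_{r-j}$ (with $a_0=x_0=0$), and the primes denote block sums over the chosen composition. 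The single term with $m=r$, i.e. all blocks of size one, is precisely the right-hand side of the first display, hence equals $g$. Thus it suffices to show that every term with $m<r$ contributes nothing to the coefficient of $x_1^{n_1-1}\cdots x_r^{n_r-1}$ once all $n_i\ge2$.

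The key computation is that each block variable telescopes. If block $k$ runs over $y$-indices $p_k$ through $q_k$, then
$$y'_{(k)}=\sum_{j=p_k}^{q_k}(x_{r-j+1}-x_{r-j})=x_{r-p_k+1}-x_{r-q_k},$$
a single difference of two of the original variables (the last block's lower endpoint being $x_0=0$). Tracking the boundaries and using $q_{k}=p_{k+1}-1$, the lower endpoint of block $k$ equals the upper endpoint of block $k+1$, so the variables appearing in the $m$-block term are exactly $x_{s_1},\dots,x_{s_m}$ with $s_k=r-(i_1+\cdots+i_{k-1})$ — a subset of $\{x_1,\dots,x_r\}$ of cardinality $m$. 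When $m<r$ this is a proper subset, so this $H$-term is a power series not involving at least one variable $x_{j_0}$, and its coefficient of any monomial carrying a positive power of $x_{j_0}$ vanishes. The hypothesis $n_i\ge2$ forces $n_{j_0}-1\ge1$, so $x_1^{n_1-1}\cdots x_r^{n_r-1}$ does carry a positive power of $x_{j_0}$; hence the contribution is zero.

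The main obstacle — indeed the only non-routine point — is the telescoping bookkeeping: one must check carefully that consecutive block sums share endpoints, so that a block of size $\ge2$ genuinely ``skips'' the intermediate $x_i$'s and the surviving variables are exactly the $m$ boundary nodes. Once this is pinned down, combining the two displays gives equality of the coefficient of $x_1^{n_1-1}\cdots x_r^{n_r-1}$ in $g_{\tsha}$ and in $g$, that is, $g^{\tsha}\vvec{n_1,\dots,n_r}{a_1,\dots,a_r}{q}=g\vvec{n_1,\dots,n_r}{a_1,\dots,a_r}{q}$.
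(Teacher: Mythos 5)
Your proof is correct and takes essentially the same route as the paper: expand $g_{\tsha}=H\circ\exp$ over compositions $i_1+\cdots+i_m=r$, identify the $m=r$ term with $g$ via \Cref{lem:gH}, and observe that each term with $m<r$ is a power series in only a proper subset of $x_1,\dots,x_r$ (the $m$ block-boundary variables, by telescoping), so it cannot contribute to $x_1^{n_1-1}\cdots x_r^{n_r-1}$ when every $n_i\ge2$. The only difference is that you spell out the telescoping bookkeeping behind the vanishing of the $m<r$ terms, which the paper merely asserts.
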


\begin{proof}
By definition of $g_{\tsha},h,H\in\CC\llbracket q\rrbracket\llbracket x_1,\dots,x_r\rrbracket$, we have
$$g_{\tsha}\binom{a_1,\dots,a_r}{x_1,\dots,x_r}=\sum_{\substack{1\le m\le r\\i_1+\cdots+i_m=r\\i_j>0}}\frac{1}{i_1!\cdots i_m!}H\left(\begin{matrix}i_1,\dots,i_m\\a_{i_1}^{\prime\prime},\dots,a_{i_m}^{\prime\prime}\\x_{i_1}^{\prime\prime},\dots,x_{i_m}^{\prime\prime}\end{matrix}\right),$$
where $p_{i_j}^{\prime\prime}\coloneq p_{r-(i_1+\cdots+i_{j-1})}-p_{r-(i_1+\cdots+i_j)}$ for $p\in\{a,x\}$ and $j=1,\dots,r$. Since the coefficients of $x_1^{n_1-1}\cdots x_r^{n_r-1}$ in $H\left(\begin{matrix}i_1,\dots,i_m\\a_{i_1}^{\prime\prime},\dots,a_{i_m}^{\prime\prime}\\x_{i_1}^{\prime\prime},\dots,x_{i_m}^{\prime\prime}\end{matrix}\right)$ are zero for any $n_1,\dots,n_r\ge2$ when $1\le m<r$. By \Cref{lem:gH}, we have
\begin{align}
&\text{The coefficient of $x_1^{n_1-1}\cdots x_r^{n_r-1}$ in }g_{\tsha}\binom{a_1,\dots,a_r}{x_1,\dots,x_r}\\
&=\text{The coefficient of $x_1^{n_1-1}\cdots x_r^{n_r-1}$ in }H\left(\begin{array}{@{}c@{}c@{}c@{}l@{}}1&,\dots,&1&,1\\a_r-a_{r-1}&,\dots,&a_2-a_1&,a_1\\x_r-x_{r-1}&,\dots,&x_2-x_1&,x_1\end{array}\right)\\
&=\text{The coefficient of $x_1^{n_1-1}\cdots x_r^{n_r-1}$ in }g\binom{a_1,\dots,a_r}{x_1,\dots,x_r}.
\end{align}
\end{proof}

\subsection{Shuffle regularization for MES of level $N$}

\begin{dfn}
We define the shuffle regularized MES of level $N$ $G:\ha^1\to\mathcal{O}(\HH)$ by
$$G^{\tsha}(w;\tau)\coloneq(\zeta^{\tsha}\star g^{\tsha})(w;q)$$
for $w\in\ha^1$.
\end{dfn}

\begin{rem}\label{rem:MESshu}
$G^{\tsha}$ satisfy the shuffle product since all maps $\Delta$, $\zeta^{\tsha}$ and $g^{\tsha}$ are $\tsha$-homomorphisms.
\end{rem}

This regularization makes sense, in other words, the regularized MES are equal to the original MES for the cases of convergence.

\begin{prop}\label{prop:MESha2}
It holds $G^{\tsha}=G$ on $\ha^2$.
\end{prop}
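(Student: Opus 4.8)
The plan is to reduce the statement to \Cref{thm:main1} and then to a cancellation governed by the antipode relation. By \Cref{thm:main1} we have $G(w;\tau)=(\zeta\star g)(w;q)$ for $w\in\ha^2$, while by definition $G^{\tsha}(w;\tau)=(\zeta^{\tsha}\star g^{\tsha})(w;q)$. Since $\zeta\star g$ and $\zeta^{\tsha}\star g^{\tsha}$ are both of the form $m\circ(\,\cdot\otimes\cdot\,)\circ\Delta_\mu$ and are applied to the \emph{same} element $\Delta_\mu(w)$, it suffices to compare the two coefficient systems term by term along the decomposition $\Delta_\mu(w)=\sum w'\otimes w''$ provided by \Cref{prop:phitilde}.

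First I would dispose of the left tensor factors. For $w\in\ha^2$ the left factor produced in \Cref{prop:phitilde} is a $\tsha$-product of the blocks $\IT\binom{n_1,\dots,n_{t_1-1}}{a_1,\dots,a_{t_1-1}}$ and $\IT\binom{k_{q_j-1},\dots,k_{t_j}}{\cdots}$, $\IT\binom{k_{q_j+1},\dots,k_{t_{j+1}-1}}{\cdots}$. In the first block all exponents are the original $n_i\ge2$, and in the remaining blocks every index $k_p$ with $p\ne q_j$ is weighted by $\binom{k_p-1}{n_p-1}$, which vanishes unless $k_p\ge n_p\ge2$. Hence each block is admissible, the product lies in $\widetilde{\ha^0}$, and $\zeta^{\tsha}$ agrees there with $\zeta$. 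Writing the difference of the two convolutions and using $\zeta^{\tsha}(w')=\zeta(w')$, the problem collapses to showing
$$\sum_{w'\otimes w''}\zeta(w')\left(g^{\tsha}(w'')-g(w'')\right)=0,$$
where $w''=\IT\binom{k_{q_1},\dots,k_{q_h}}{a_{q_1},\dots,a_{q_h}}$ runs over the right factors.

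The easy part of this sum vanishes at once. By \Cref{lem:gsha} we have $g^{\tsha}=g$ on every index with all upper components $\ge2$, and at depth one the exponential clustering in the definition of $g_{\tsha}$ is trivial, so $g^{\tsha}\binom{k}{a}=g\binom{k}{a}$ also for $k=1$ (compare \Cref{lem:gH}). Consequently $g^{\tsha}(w'')-g(w'')$ can be nonzero only when $w''$ has depth at least two and carries at least one upper index equal to $1$; these are the only surviving terms.

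The main obstacle is to kill exactly these residual terms. My plan is to expand $g^{\tsha}(w'')-g(w'')$ via Kitada's exponential map (the defining relation $h=H\circ\exp$ and the definition of $g_{\tsha}$), which rewrites it as a $\QQ$-linear combination of \emph{lower-depth} multiple divisor functions $g$ obtained by clustering the slots around each upper index $1$. Substituting this expansion back and regrouping the summation over the marked positions $q_1,\dots,q_h$ and over the compositions $k_{t_j}+\cdots+k_{t_{j+1}-1}=n_{t_j}+\cdots+n_{t_{j+1}-1}$, I expect the accompanying coefficients---which are products of admissible MZVs---to assemble precisely into instances of the antipode relation \Cref{cor:antizeta}, hence to sum to zero. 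This is the same cancellation mechanism that already governs the $k_{q_i}=1$ contributions in the proof of \Cref{prop:Eisenstein}. The delicate point, where I would concentrate the effort, is the index and sign bookkeeping needed to exhibit the regrouped coefficients as genuine left-hand sides of \Cref{cor:antizeta}; once that matching is in place the vanishing is automatic and the proposition follows.
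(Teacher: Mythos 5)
Your proposal is correct and follows essentially the same route as the paper: compare the two convolutions term by term, note that the left tensor factors are admissible so $\zeta^{\tsha}=\zeta$ there, kill the terms with all $k_{q_i}\ge2$ by \Cref{lem:gsha}, and kill the rest by \Cref{cor:antizeta}. The "delicate bookkeeping" you defer is exactly the one observation the paper supplies for the last step: when $k_{q_{j_1}}=\cdots=k_{q_{j_s}}=1$ the difference $(g^{\tsha}-g)\vvec{k_{q_1},\dots,k_{q_h}}{a_{q_1},\dots,a_{q_h}}{q}$ splits as $\sum_{l=1}^s f_l(q)$ with $f_l$ independent of $a_{q_{j_l}}$, so the sum over the position $q_{j_l}$ and the compositions with $k_{q_{j_l}}=1$ factors out and is precisely the vanishing left-hand side of \Cref{cor:antizeta}.
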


\begin{proof}
By the explicit formula for the Fourier expansion of MES, we have
\begin{align}
&(G^{\tsha}-G)\vvec{n_1,\dots,n_r}{a_1,\dots,a_r}{\tau}\\
&=\sum_{\substack{0\le h\le r\\0<t_1<\cdots<t_h<r+1}}\zeta\binom{n_1,\dots,n_{t_1-1}}{a_1,\dots,a_{t_1-1}}\rising{5}{$\sum_{\substack{t_1\le q_1\le t_2-1\\[-5pt]\vdots\\t_h\le q_h\le r}}$}\rising{5}{$\sum_{\substack{k_{t_j}+\cdots+k_{t_{j+1}-1}\\=n_{t_j}+\cdots+n_{t_{j+1}-1}\\(1\le j\le h),k_i\ge1}}$}\prod_{j=1}^h\Bigg\{(-1)^{l_j}\Bigg(\prod_{\substack{p=t_j\\p\ne q_j}}^{t_{j+1}-1}\binom{k_p-1}{n_p-1}\Bigg)\\
&\times\zeta\bigg(\begin{array}{@{}c@{}c@{}c@{}}k_{q_j-1}&,\dots,&k_{t_j}\\a_{q_j}-a_{q_j-1}&,\dots,&a_{q_j}-a_{t_j}\end{array}\bigg)\zeta\bigg(\begin{array}{@{}c@{}c@{}c@{}}k_{q_j+1}&,\dots,&k_{t_{j+1}-1}\\a_{q_j+1}-a_{q_j}&,\dots,&a_{t_{j+1}-1}-a_{q_j}\end{array}\bigg)\Bigg\}
(g^{\tsha}-g)\bigg(\begin{matrix}k_{q_1},\dots,k_{q_h}\\a_{q_1},\dots,a_{q_h}\end{matrix};q\bigg)
\end{align}
for $n_1,\dots,n_r\ge2$. The terms with $k_{q_1},\dots,k_{q_h}\ge2$ vanish by \Cref{lem:gsha}. When $k_{q_{j_1}},\dots,k_{q_{j_s}}=1$ for some $1\le j_1<\cdots<j_s\le h$, the terms vanish by \Cref{cor:antizeta} since we can write $(g^{\tsha}-g)\vvec{k_{q_1},\dots,k_{q_h}}{a_{q_1},\dots,a_{q_h}}{q}=\sum_{l=1}^sf_l(q)$ such that $f_l$ does not depend on $a_{q_{j_l}}$. 
\end{proof}

\section{Linear relations among regularized MES of level $N$}\label{sec:rel}
In this final section, we obtain the restricted double shuffle relations and the distribution relations for MES. We also provide the sum and weighted sum formulas for double Eisenstein series (DES).
\newtheorem*{main2}{\Cref{thm:main2}}
\begin{main2}[Restricted double shuffle relation]
For any words $w_1,w_2\in \ha^2$, we have
\begin{align}\label{eq:rDSR}
G(w_1\tas w_2;\tau)=G^{\tsha}(w_1\tsha w_2;\tau).
\end{align}
\end{main2}

\begin{proof}
By \Cref{prop:MESha2}, it holds that $G(w;\tau)=G^{\tsha}(w;\tau)$ for any $w\in\ha^2$.
Therefore, the statement follows since $G(w;\tau)=G^{\tsha}(w;\tau)$ satisfy both the harmonic product and the shuffle product formulas (\Cref{rem:MEShar}, \Cref{rem:MESshu}).
\end{proof}

Let $G^\sha_{n_1,\dots,n_r}(\tau)$, $\zeta^\sha(n_1,\dots,n_r)$ and $g^\sha_{n_1,\dots,n_r}(q)$ denote those of level $1$. Note that the $\tsha$-product is equal to the $\sha$-products when $N=1$.

\begin{thm}[Distribution relation]
For $n_1,\dots,n_r\ge1$, we have
$$\sum_{a_1,\dots,a_r\in\ZZ/N\ZZ}G^{\tsha}\bigg(\begin{matrix}n_1,\dots,n_r\\a_1,\dots,a_r\end{matrix};\tau\bigg)=G^\sha_{n_1,\dots,n_r}(N\tau).$$
\end{thm}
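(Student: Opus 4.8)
The plan is to exploit the defining factorization $G^{\tsha}=\zeta^{\tsha}\star g^{\tsha}$, reducing the distribution relation to the corresponding relations for the two building blocks $\zeta^{\tsha}$ and $g^{\tsha}$, and then to feed these into the explicit Fourier expansion of $G^{\tsha}$ coming from \Cref{prop:phitilde}. As an orienting remark, for admissible indices ($n_i\ge2$) the relation is essentially immediate from the lattice-sum definition: summing $G\binom{n_1,\dots,n_r}{a_1,\dots,a_r}$ over all residues $a_1,\dots,a_r\in\ZZ/N\ZZ$ removes the congruence conditions $m_i\equiv a_i$, so that the summation lattice $N\ZZ\tau+\ZZ$ and the order $\prec$ coincide with those defining the level-$1$ series $G^\sha_{n_1,\dots,n_r}(N\tau)$. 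The real work is therefore in the regularized (non-admissible) range, where the lattice sum diverges and one must argue through the regularization.

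First I would prove the two block-level distribution relations
\begin{gather*}
\sum_{a_1,\dots,a_r\in\ZZ/N\ZZ}\zeta^{\tsha}\binom{n_1,\dots,n_r}{a_1,\dots,a_r}=\zeta^\sha(n_1,\dots,n_r),\\
\sum_{a_1,\dots,a_r\in\ZZ/N\ZZ}g^{\tsha}\vvec{n_1,\dots,n_r}{a_1,\dots,a_r}{q}=g^\sha_{n_1,\dots,n_r}(q^N).
\end{gather*}
The first is clean: since $\rho$ permutes the residues bijectively, summing over all $a_i$ equals summing over all arguments after $\rho$; applying $\pi$ and using that $\sum_{b\in\ZZ/N\ZZ}\eta^{-bc}$ equals $N$ when $c=0$ and $0$ otherwise projects every letter onto its $0$-component, after which $L_\sha^{\mathrm{reg}}|_{T=0}$ returns the level-$1$ value $\zeta^\sha$. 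The second I would establish at the level of Kitada's generating functions: summing $g_{\tsha}\binom{a_1,\dots,a_r}{x_1,\dots,x_r}=h\binom{a_r-a_{r-1},\dots,a_1}{x_r-x_{r-1},\dots,x_1}$ over all $a_i$ (equivalently over all differences, again a bijection) and using that $\sum_{a}\eta^{ad}$ equals $N$ exactly when $N\mid d$ forces every $d_j\in N\ZZ$ in each factor of $H$; the substitution $d_j=Nd_j'$ rescales $q\mapsto q^N$ and the $x$-variables by $N$, and all powers of $N$ cancel against the normalization $\big(\tfrac{N}{-2\pi\sqrt{-1}}\big)^{n}$, leaving exactly $g^\sha_{n_1,\dots,n_r}(q^N)$.

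Next I would combine these via the Fourier expansion. By the definition $G^{\tsha}=\zeta^{\tsha}\star g^{\tsha}$ together with the coproduct formula \Cref{prop:phitilde}, the series $G^{\tsha}\binom{n_1,\dots,n_r}{a_1,\dots,a_r}$ is a finite sum, indexed by the marking data $h$, $t_1<\cdots<t_h$, the $q_j$ and the $k_i$, of products of regularized MZV factors times a single factor $g^{\tsha}\binom{k_{q_1},\dots,k_{q_h}}{a_{q_1},\dots,a_{q_h}}$. The residues split into a prefix $a_1,\dots,a_{t_1-1}$ occurring only in $\zeta^{\tsha}\binom{n_1,\dots,n_{t_1-1}}{a_1,\dots,a_{t_1-1}}$, and, for each block $t_j\le p\le t_{j+1}-1$, the residues $a_{t_j},\dots,a_{t_{j+1}-1}$, which enter the two difference-MZV factors only through differences centered at $a_{q_j}$, while $a_{q_j}$ itself is the marked residue feeding $g^{\tsha}$. \emph{Within each block} the linear map sending $(a_{t_j},\dots,a_{t_{j+1}-1})$ to these differences together with the center $a_{q_j}$ is a bijection of $(\ZZ/N\ZZ)^{t_{j+1}-t_j}$; summing the two $\zeta^{\tsha}$-factors over the differences collapses them to the level-$1$ values $\zeta^\sha$, independent of $a_{q_j}$, by the first block relation, and the prefix sum collapses likewise. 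What remains is the joint sum over the centers $a_{q_1},\dots,a_{q_h}$ of $g^{\tsha}\binom{k_{q_1},\dots,k_{q_h}}{a_{q_1},\dots,a_{q_h}}$, which the second block relation turns into $g^\sha_{k_{q_1},\dots,k_{q_h}}(q^N)$. Term by term this reproduces the level-$1$ Fourier expansion of $G^\sha_{n_1,\dots,n_r}(N\tau)$ (\Cref{prop:Eisenstein} with $N=1$, following Bachmann--Tasaka \cite{BT}), which completes the argument.

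The main obstacle I anticipate is the regularized block relation for $g^{\tsha}$: unlike the convergent case, $g^{\tsha}$ is accessible only through the generating functions $H$, $h$, $g_{\tsha}$ and Hoffman's $\exp$-isomorphism, so one must verify that summation over residues commutes with the shuffle regularization and that the interaction of the substitution $d_j=Nd_j'$ with the block structure of $\exp$ produces no spurious constant. Careful bookkeeping of the three sources of powers of $N$ — the multiplicity $N^{r-m}$ of block sums, the factor $N^{m}$ from the character sums, and the rescaling $N^{\,n-r}$ of the monomials — is precisely what guarantees that the surviving normalization is exactly $\big(\tfrac{N}{-2\pi\sqrt{-1}}\big)^{n}$, so that no extraneous power of $N$ appears. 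A secondary point to treat with care is that the difference-MZV factors are genuinely regularized objects, so the first block relation must be invoked in its regularized form $\zeta^{\tsha}$ rather than only for convergent indices.
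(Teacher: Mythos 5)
Your proposal is correct and follows essentially the same route as the paper: the paper's proof also reduces the claim to the two block distribution relations, proving the one for $\zeta^{\tsha}$ by the character sum through $\pi\circ\rho$ and the one for $g^{\tsha}$ at the level of Kitada's generating functions via $\sum_a\eta^{ad}$ forcing $N\mid d_j$ and the rescaling $q\mapsto q^N$, $x_j\mapsto Nx_j$. Your explicit block-by-block bijection argument justifying why these two relations suffice is a detail the paper leaves implicit behind ``it suffices to show,'' but the substance of the argument is identical.
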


\begin{proof}
It suffices to show $\zeta^{\tsha}$ and $g^{\tsha}$ satisfy distribution relations i.e.
\begin{gather}
\sum_{a_1,\dots,a_r\in\ZZ/N\ZZ}\zeta^{\tsha}\binom{n_1,\dots,n_r}{a_1,\dots,a_r}=\zeta^\sha(n_1,\dots,n_r),\\
\sum_{a_1,\dots,a_r\in\ZZ/N\ZZ}g^{\tsha}\bigg(\begin{matrix}n_1,\dots,n_r\\a_1,\dots,a_r\end{matrix};q\bigg)=g^{\sha}_{n_1,\dots,n_r}(q^N).
\end{gather}
By definition of $\zeta^{\tsha}$, we have
$$\sum_{a_1,\dots,a_r\in\ZZ/N\ZZ}\zeta^{\tsha}\binom{n_1,\dots,n_r}{a_1,\dots,a_r}=\frac{1}{N^r}\sum_{b_1,\dots,b_r\in\ZZ/N\ZZ}\left(\sum_{a_1,\dots,a_r\in\ZZ/N\ZZ}\eta^{-\rho(\bm{a})\cdot\bm{b}}\right)L_{\sha}^{\mathrm{reg}}\binom{n_1,\dots,n_r}{b_1,\dots,b_r}\bigg|_{T=0}.$$
Since
$$\sum_{a_1,\dots,a_r\in\ZZ/N\ZZ}\eta^{-\rho(\bm{a})\cdot\bm{b}}=\prod_{j=1}^r\sum_{a_j\in\ZZ/N\ZZ}\eta^{a_j(b_j-b_{j+1})}=\begin{cases}N^r&b_1=\cdots=b_r=0,\\0&\mathrm{otherwise}.\end{cases},$$
we have
$$\sum_{a_1,\dots,a_r\in\ZZ/N\ZZ}\zeta^{\tsha}\binom{n_1,\dots,n_r}{a_1,\dots,a_r}=L_{\sha}^{\mathrm{reg}}\binom{n_1,\dots,n_r}{0,\dots,0}\bigg|_{T=0}=\zeta^{\sha}(n_1,\dots,n_r).$$
By definition of $a_{i_j}^{\prime\prime}$, we have
\begin{align}
\sum_{a_1,\dots,a_r\in\ZZ/N\ZZ}g_{\tsha}\vvec{a_1,\dots,a_r}{x_1,\dots,x_r}{q}&=\sum_{\substack{1\le m\le r\\i_1+\cdots+i_m=r\\i_1,\dots,i_m>0}}\frac{1}{i_1!\cdots i_m!}\sum_{a_1,\dots,a_r\in\ZZ/N\ZZ}H\left(\begin{matrix}i_1,\dots,i_m\\a_{i_1}^{\prime\prime},\dots,a_{i_m}^{\prime\prime}\\x_{i_1}^{\prime\prime},\dots,x_{i_m}^{\prime\prime}\end{matrix};q\right)\\
&=\sum_{\substack{1\le m\le r\\i_1+\cdots+i_m=r\\i_1,\dots,i_m>0}}\frac{1}{i_1!\cdots i_m!}\Bigg(\sum_{\substack{a_1,\dots,a_r\in\ZZ/N\ZZ\\\widehat{a_{i_1}},\dots,\widehat{a_{i_m}}}}1\Bigg)\\
&\times\sum_{a_{i_1},\dots,a_{i_m}\in\ZZ/N\ZZ}H\left(\begin{matrix}i_1,\dots,i_m\\a_{i_1},\dots,a_{i_m}\\x_{i_1}^{\prime\prime},\dots,x_{i_m}^{\prime\prime}\end{matrix};q\right).
\end{align}
Therefore, we have
\begin{align}
\sum_{a_1,\dots,a_r\in\ZZ/N\ZZ}H\left(\begin{matrix}n_1,\dots,n_r\\a_1,\dots,a_r\\x_1,\dots,x_r\end{matrix};q\right)&=\sum_{0<d_1<\cdots<d_r}\prod_{j=1}^r\left(\sum_{a_j\in\ZZ/N\ZZ}\eta^{a_jd_j}\right)e^{d_jx_j}\left(\frac{q^{d_j}}{1-q^{d_j}}\right)^{n_j}\\
&=N^r\sum_{\substack{0<d_1<\cdots<d_r\\d_i=0}}\prod_{j=1}^re^{d_jx_j}\left(\frac{q^{d_j}}{1-q^{d_j}}\right)^{n_j}\\
&=N^rH\left(\begin{matrix}n_1,\dots,n_r\\0,\dots,0\\Nx_1,\dots,Nx_r\end{matrix};q^N\right).
\end{align}
By definition of $g_{\tsha}$, we have
\begin{align}
\sum_{a_1,\dots,a_r\in\ZZ/N\ZZ}g_{\tsha}\vvec{a_1,\dots,a_r}{x_1,\dots,x_r}{q}&=\sum_{\substack{1\le m\le r\\i_1+\cdots+i_m=r\\i_1,\dots,i_m>0}}\frac{1}{i_1!\cdots i_m!}N^{r-m}\cdot N^mH\left(\begin{matrix}i_1,\dots,i_m\\0,\dots,0\\Nx_{i_1}^{\prime\prime},\dots,Nx_{i_m}^{\prime\prime}\end{matrix};q^N\right)\\
&=N^rg_{\tsha}\vvec{0,\dots,0}{Nx_1,\dots,Nx_r}{q^N}.
\end{align}
Comparing both coefficients, we have
$$\sum_{a_1,\dots,a_r\in\ZZ/N\ZZ}g^{\tsha}\bigg(\begin{matrix}n_1,\dots,n_r\\a_1,\dots,a_r\end{matrix};q\bigg)=g^{\sha}_{n_1,\dots,n_r}(q^N).$$
\end{proof}

We give sum and weighted sum formula for DES in terms of the generating functions. Let $F^k_{a_1,a_2}(x_1,x_2)$ be the generating function of DES of weight $k$ $(\ge4)$ and level $N$,
$$F^k_{a_1,a_2}(x_1,x_2)\coloneq\sum_{\substack{i+j=k\\i,j>1}}G\vvec{i,j}{a_1,a_2}{\tau}x_1^{i-1}x_2^{j-1}\quad(a_1,a_2\in\ZZ/N\ZZ).$$
By the restricted double shuffle relations for DES, we have the following equations for the generating functions.

\begin{lem}[\cite{Kitada23}]\label{lem:genG}
For any integer $k\ge4$ and $a_1,a_2\in\ZZ/N\ZZ$, we have
\begin{align}
&F^k_{a_1,a_2}(x_1,x_2)+F^k_{a_2,a_1}(x_2,x_1)+\delta_{a_1,a_2}G\binom{k}{a_1}\left(\frac{x_1^{k-1}-x_2^{k-1}}{x_1-x_2}-(x_1^{k-2}-x_2^{k-2})\right)\\
&=F^k_{a_1,a_1+a_2}(x_1,x_1+x_2)+F^k_{a_2,a_1+a_2}(x_2,x_1+x_2)\\
&+\left(G^{\tsha}\binom{1,k-1}{a_1,a_1+a_2}+G^{\tsha}\binom{1,k-1}{a_2,a_1+a_2}\right)(x_1+x_2)^{k-2}\\
&-\sum_{\substack{i+j=k\\i>0,j>1}}G^{\tsha}\binom{i,j}{a_1,a_1+a_2}x_1^{k-2}-\sum_{\substack{i+j=k\\i>0,j>1}}G^{\tsha}\binom{i,j}{a_2,a_1+a_2}x_2^{k-2}.
\end{align}
\end{lem}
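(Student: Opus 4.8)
The plan is to package the restricted double shuffle relation (\Cref{thm:main2}) applied to the depth-one product $G\binom{i}{a_1}G\binom{j}{a_2}$ into the generating series $F_{a_1,a_2}$. First I would record the two expansions of this product of single Eisenstein series. On the harmonic side, the depth-one stuffle gives, for $i,j\ge 2$,
\[
z_{i,a_1}\tas z_{j,a_2}=z_{i,a_1}z_{j,a_2}+z_{j,a_2}z_{i,a_1}+\delta_{a_1,a_2}z_{i+j,a_1},
\]
so that $G(z_{i,a_1}\tas z_{j,a_2})=G\binom{i,j}{a_1,a_2}+G\binom{j,i}{a_2,a_1}+\delta_{a_1,a_2}G\binom{i+j}{a_1}$. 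On the shuffle side, applying $\rho$ to the plain shuffle of $y_{a_1}x^{i-1}$ and $y_{a_2}x^{j-1}$ yields
\[
z_{i,a_1}\tsha z_{j,a_2}=\sum_{a+b=i+j}\binom{b-1}{j-1}\binom{a,b}{a_1,a_1+a_2}+\sum_{a+b=i+j}\binom{b-1}{i-1}\binom{a,b}{a_2,a_1+a_2}.
\]
Applying $G^{\tsha}$ and \Cref{thm:main2} then produces, for every admissible pair $i,j\ge 2$ with $i+j=k$, a scalar identity equating these two expressions.

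Next I would multiply this scalar identity by $X^{i-1}Y^{j-1}$ and sum over $i+j=k$, $i,j\ge 2$. The harmonic side immediately gives $F_{a_1,a_2}(X,Y)+F_{a_2,a_1}(Y,X)$ together with the diagonal contribution $\delta_{a_1,a_2}G\binom{k}{a_1}\sum_{i,j\ge2}X^{i-1}Y^{j-1}$, where the geometric sum is handled through $\sum_{i+j=k}X^{i-1}Y^{j-1}=\frac{X^{k-1}-Y^{k-1}}{X-Y}$ after stripping the $i=1$ and $j=1$ endpoints. The engine on the shuffle side is the pair of binomial generating-function identities
\[
\sum_{i+j=k}\binom{b-1}{j-1}X^{i-1}Y^{j-1}=X^{a-1}(X+Y)^{b-1},\qquad \sum_{i+j=k}\binom{b-1}{i-1}X^{i-1}Y^{j-1}=Y^{a-1}(X+Y)^{b-1},
\]
valid over the full range $i,j\ge 1$ with $a+b=k$. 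These turn the two shuffle sums into $\sum_{a+b=k}G^{\tsha}\binom{a,b}{a_1,a_1+a_2}X^{a-1}(X+Y)^{b-1}$ and the same expression with $X\leftrightarrow Y$, $a_1\leftrightarrow a_2$, whose admissible ($a,b\ge2$) parts are exactly $F_{a_1,a_1+a_2}(X,X+Y)$ and $F_{a_2,a_1+a_2}(Y,X+Y)$, using that $G^{\tsha}$ agrees with $G$ on admissible indices and \Cref{lem:gsha}.

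The delicate point is that the binomial identities sum over $i,j\ge 1$, whereas \Cref{thm:main2} is only available for admissible indices $i,j\ge 2$. I would therefore write each restricted sum as the full sum minus its $i=1$ and $j=1$ boundary contributions, and separately isolate the $a=1$ and $b=1$ terms of the repackaged shuffle series, which carry the non-admissible values $G^{\tsha}\binom{1,k-1}{\cdots}$ and $G^{\tsha}\binom{k-1,1}{\cdots}$. The $a=1$ boundaries collect into the $(X+Y)^{k-2}$ term, while the $i=1$ and $j=1$ corrections together with the $b=1$ boundaries assemble into the coefficients of $X^{k-2}$ and $Y^{k-2}$, reorganized as the sums $\sum_{i>0,j>1}G^{\tsha}\binom{i,j}{\cdots}$.

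The hard part will be precisely this reconciliation of the boundary and non-admissible terms: one must verify coefficient by coefficient that every index with a $1$ in an illegal slot either regularizes correctly through $g^{\tsha}$ (using \Cref{lem:gsha}, with the constant-term cancellations guaranteed by \Cref{cor:antizeta}) or is absorbed into the explicit $(X+Y)^{k-2}$, $X^{k-2}$, $Y^{k-2}$ corrections, and that the leftover diagonal pieces combine into the stated prefactor of $\delta_{a_1,a_2}G\binom{k}{a_1}$. The bulk computation—the shuffle and stuffle of two depth-one words and the binomial identities—is routine; it is the matching of endpoints and the bookkeeping of which regularized term lands in which monomial that requires care, and once this is carried out the generating-function identity follows.
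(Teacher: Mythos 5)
Your proposal follows exactly the paper's route: apply the restricted double shuffle relation to the depth-one product $G\binom{i}{a_1}G\binom{j}{a_2}$ (stuffle side versus the $\rho$-twisted shuffle $\sum_{a+b=i+j}\binom{b-1}{j-1}\binom{a,b}{a_1,a_1+a_2}+\binom{b-1}{i-1}\binom{a,b}{a_2,a_1+a_2}$), multiply by $X^{i-1}Y^{j-1}$, sum over $i+j=k$ with $i,j\ge2$, and repackage with the binomial identity $\sum_{i+j=k}\binom{b-1}{j-1}X^{i-1}Y^{j-1}=X^{a-1}(X+Y)^{b-1}$ — which is precisely the computation the paper compresses into two lines. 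The only remark is that your concern about the $b=1$ boundary terms is vacuous, since their coefficients $\binom{0}{j-1}$ and $\binom{0}{i-1}$ vanish for $i,j\ge2$; the genuine bookkeeping is confined to the $i=1$ and $j=1$ endpoints and the $a=1$ (non-admissible) slot, exactly as you describe.
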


\begin{proof}
By the restricted double shuffle relations, we have
\begin{align}
G\binom{i}{a_1}G\binom{j}{a_2}&=G\binom{i,j}{a_1,a_2}+G\binom{j,i}{a_2,a_1}+\delta_{a_1,a_2}G\binom{i+j}{a_1}\\
&=\sum_{\substack{m+n=i+j\\m,n>1}}\left\{\binom{n-1}{j-1}G\binom{m,n}{a_1,a_1+a_2}+\binom{n-1}{i-1}G\binom{m,n}{a_2,a_1+a_2}\right\}\\
&+\binom{i+j-2}{i-1}\left(G^{\tsha}\binom{1,i+j-1}{a_1,a_1+a_2}+G^{\tsha}\binom{1,i+j-1}{a_2,a_1+a_2}\right)
\end{align}
for $i,j>1$. Multiplying $x_1^{i-1}x_2^{j-1}$ and adding up for $i+j=k$, $i,j>1$, we obtain the equation.
\end{proof}

By using this lemma, we have sum and weighted sum formulas for DES.

\begin{thm}[Sum formula for DES, Kitada \cite{Kitada23}]
For any even integer $k\ge4$ and $a\in\ZZ/N\ZZ$, we have
$$2\sum_{\substack{i+j=k\\i,j>1}}\left((-1)^{i-1}G\binom{i,j}{a,a}+G\binom{i,j}{a,2a}\right)+4G^{\tsha}\binom{1,k-1}{a,2a}=G\binom{k}{a}.$$
\end{thm}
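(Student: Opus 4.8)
The plan is to specialize the generating-function identity of \Cref{lem:genG} to the diagonal index $a_1=a_2=a$ and then evaluate the resulting polynomial identity in $X,Y$ at the numerical point $(X,Y)=(1,-1)$; the sum formula falls out after elementary manipulations that exploit the parity of $k$.

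First I would set $a_1=a_2=a$. This is the natural choice: it forces $\delta_{a_1,a_2}=1$ and $a_1+a_2=2a$, so the left-hand side produces the family $G\binom{i,j}{a,a}$ while the right-hand side produces the family $G\binom{i,j}{a,2a}$ together with the lone regularized boundary term $G^{\tsha}\binom{1,k-1}{a,2a}$ — exactly the three kinds of objects in the statement. Reading off the shuffle side of the restricted double shuffle relation for $i,j>1$ (as in the proof of \Cref{lem:genG}), the coefficient of the interior word $\binom{m,n}{a,2a}$ is $\binom{n-1}{j-1}+\binom{n-1}{i-1}$, and the coefficient of $G^{\tsha}\binom{1,k-1}{a,2a}$ is $2\binom{k-2}{i-1}$, the factor $2$ coming precisely from the coincidence $a_1=a_2$.

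Next I would substitute $X=1$, $Y=-1$. Since $k$ is even, for each $i+j=k$ the integers $i,j$ have the same parity, so $X^{i-1}Y^{j-1}=(-1)^{i-1}=(-1)^{j-1}$; hence $F_{a,a}(1,-1)+F_{a,a}(-1,1)$ collapses to $2\sum_{i,j>1}(-1)^{i-1}G\binom{i,j}{a,a}$, and the coefficient $\sum_{i+j=k,\,i,j>1}X^{i-1}Y^{j-1}$ of $G\binom{k}{a}$ becomes $\sum_{i=2}^{k-2}(-1)^{i-1}=-1$ (an odd number of alternating signs), which I would move across as $+G\binom{k}{a}$. All terms carrying the argument $X+Y$ vanish because $X+Y=0$ and every entry is $\ge 1$. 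It then remains to evaluate, at $(1,-1)$, the two coefficient sums $\sum_{i,j>1}\bigl(\binom{n-1}{j-1}+\binom{n-1}{i-1}\bigr)X^{i-1}Y^{j-1}$ and $\sum_{i,j>1}2\binom{k-2}{i-1}X^{i-1}Y^{j-1}$. Both reduce to truncated alternating binomial sums: using $(1-1)^{m}=0$ one gets $\sum_{s=1}^{k-3}\binom{n-1}{s}(-1)^s=-1$ (only the endpoint $s=0$ is discarded, as $n-1\le k-3$) and $\sum_{s=1}^{k-3}\binom{k-2}{s}(-1)^s=-2$ (here \emph{both} endpoints $s=0$ and $s=k-2$ drop, the latter because $(-1)^{k-2}=1$). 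Combining these with $G^{\tsha}=G$ on the convergent range $m,n>1$ (the proposition identifying $G^{\tsha}$ with $G$ on $\ha^2$) yields coefficient $2$ on $\sum_{i,j>1}G\binom{i,j}{a,2a}$ and coefficient $2\cdot 2=4$ on $G^{\tsha}\binom{1,k-1}{a,2a}$; collecting everything gives the asserted identity.

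The step I expect to be the main obstacle is producing the coefficient $4$ (rather than $2$) on the boundary term $G^{\tsha}\binom{1,k-1}{a,2a}$. This rests on the fact that in $\sum_{s}\binom{k-2}{s}(-1)^s$ \emph{both} extreme terms are stripped off by the truncation, and the symmetric cancellation of the two discarded endpoints is exactly where the evenness of $k$ enters (through $(-1)^{k-2}=1$); for odd $k$ the endpoints would not match and no such clean sum formula would hold. A secondary care point is to verify that exactly one non-admissible word, namely $\binom{1,k-1}{a,2a}$, escapes the identification $G^{\tsha}=G$, so that it alone survives in regularized form while every interior word is replaced by an honest $G$.
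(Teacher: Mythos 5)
Your proof is correct and follows the paper's own (one-line) proof exactly: specialize \Cref{lem:genG} to $a_1=a_2=a$, take $k$ even, and evaluate at $(X,Y)=(1,-1)$. Your detailed bookkeeping --- the coefficient $-1$ on $G\binom{k}{a}$, $-2$ on each interior $G\binom{i,j}{a,2a}$, and $-4$ on $G^{\tsha}\binom{1,k-1}{a,2a}$, all obtained from the truncated alternating binomial sums --- is precisely the computation the paper leaves implicit, and it correctly recovers the factor $4$ because you read the coefficients off the double shuffle identity displayed \emph{inside} the proof of \Cref{lem:genG} rather than off the lemma's packaged statement, whose printed form appears to carry a sign slip in the $X^{k-2}-Y^{k-2}$ term and an incomplete grouping of the $i=1$ boundary terms.
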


\begin{proof}
It follows by inserting $k$ even, $(x_1,x_2)=(1,-1)$ and $a_1=a_2=a$ in \Cref{lem:genG}.
\end{proof}

\begin{thm}[Weighted sum formula for DES]
For any integer $k\ge4$ and $a\in\ZZ/N\ZZ$, we have
$$\sum_{\substack{i+j=k\\i,j>0}}\left\{(2^{j-1}-1)G^{\tsha}\binom{i,j}{a,2a}+(1-\delta_{j,1})G^{\tsha}\binom{i,j}{a,a}\right\}=\frac{k-3}{2}G^{\tsha}\binom{k}{a}.$$
\end{thm}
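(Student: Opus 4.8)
The plan is to read the assertion off the generating-function form of the restricted double shuffle relation, that is, \Cref{lem:genG} with $a_1=a_2=a$, by specializing the formal variables to $X=Y=1$. Since \Cref{lem:genG} is an identity of homogeneous polynomials of degree $k-2$ in $X,Y$, such a substitution is legitimate, and $X=Y=1$ is exactly the value that manufactures the weights $2^{j-1}$: on the shuffle side the second argument of $F_{a,2a}(X,X+Y)$ and $F_{a,2a}(Y,X+Y)$ becomes $X+Y=2$, so each $Y^{j-1}$ turns into $2^{j-1}$, and the factor $(X+Y)^{k-2}$ attached to $G^{\tsha}\binom{1,k-1}{a,2a}$ becomes $2^{k-2}$. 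On the harmonic side the symmetric pair $F_{a,a}(X,Y)+F_{a,a}(Y,X)$ collapses to $2\sum_{i,j\ge2}G\binom{i,j}{a,a}$, while the Eisenstein correction is governed by the polynomial $\frac{X^{k-1}-Y^{k-1}}{X-Y}=\sum_{p=0}^{k-2}X^{k-2-p}Y^{p}$, whose value at $X=Y=1$ is $k-1$; subtracting the two boundary monomials $X^{k-2},Y^{k-2}$ then yields $k-1-2=k-3$, which is the origin of the coefficient $\tfrac{k-3}{2}$ (the $\tfrac12$ coming from the overall factor $2$ produced by the $X\leftrightarrow Y$ symmetry).

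Concretely, writing $S_{a,a}=\sum_{i+j=k,\,i,j\ge2}G\binom{i,j}{a,a}$ and using the coincidence $G^{\tsha}=G$ on the admissible indices $\ha^2$ to identify the convergent terms, the substitution $X=Y=1$ produces the scalar relation $(\ast)$:
\begin{align}
2S_{a,a}+(k-3)\,G\binom{k}{a}
&=2\sum_{i+j=k,\,i,j\ge2}(2^{j-1}-1)\,G\binom{i,j}{a,2a}\\
&\quad+\bigl(2^{k-1}-4\bigr)G^{\tsha}\binom{1,k-1}{a,2a}.
\end{align}
On the other hand, in the left-hand side of the theorem the $j=1$ terms drop out (both coefficients $2^{j-1}-1$ and $1-\delta_{j,1}$ vanish there), and separating off the single $i=1$ term in each family gives
\begin{align}
&\sum_{i,j\ge2}(2^{j-1}-1)\,G\binom{i,j}{a,2a}+\bigl(2^{k-2}-1\bigr)G^{\tsha}\binom{1,k-1}{a,2a}\\
&+S_{a,a}+G^{\tsha}\binom{1,k-1}{a,a}.
\end{align}
Substituting $(\ast)$ for the first sum, all the $(a,2a)$-contributions and the term $\tfrac{k-3}{2}G\binom{k}{a}$ appear, leaving the residual combination $2S_{a,a}+G^{\tsha}\binom{1,k-1}{a,2a}+G^{\tsha}\binom{1,k-1}{a,a}$. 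Thus, since $G^{\tsha}\binom{k}{a}=G\binom{k}{a}$, the theorem is equivalent to the vanishing identity $(\star)$:
\begin{align}
2S_{a,a}+G^{\tsha}\binom{1,k-1}{a,2a}+G^{\tsha}\binom{1,k-1}{a,a}=0.
\end{align}

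The hard part is $(\star)$, which is a genuine sum formula rather than a formal rearrangement: the unweighted diagonal $S_{a,a}$ is visible only through the specialization $X=Y=1$, at which point \Cref{lem:genG} ties it inseparably to the shuffle off-diagonal, so no further evaluation of that one identity can isolate it. To prove $(\star)$ I would use that $G^{\tsha}=\zeta^{\tsha}\star g^{\tsha}$ is an algebra homomorphism on all of $(\ha^1,\tsha)$ — not only on admissible words — and attack the combination through the $\tsha$-product of the weight-one index $\binom{1}{a}$ with weight-$(k-1)$ indices, reducing $(\star)$ to the corresponding sum formulas for the regularized objects $\zeta^{\tsha}$ and $g^{\tsha}$; equivalently, one transports the antipode relation \Cref{cor:antizeta} (which for level $N$ descends from \Cref{lem:antipode}) to the Eisenstein series through $G=\zeta\star g$ of \Cref{thm:main1}. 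As a consistency check, in the level-one specialization $(\star)$ collapses to the transparent statement $\sum_{i+j=k,\,i\ge1,\,j\ge2}G^{\sha}_{i,j}=0$, which is the shape to reproduce for general $N$. I expect the whole difficulty to be concentrated in $(\star)$, and specifically in controlling the two regularized boundary terms $G^{\tsha}\binom{1,k-1}{a,2a}$ and $G^{\tsha}\binom{1,k-1}{a,a}$ and matching them against the convergent diagonal $S_{a,a}$; once $(\star)$ is established the theorem follows from the elementary bookkeeping above.
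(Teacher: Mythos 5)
Your starting point coincides with the paper's: the paper's entire proof consists of putting $(X,Y)=(1,1)$ and $a_1=a_2=a$ into \Cref{lem:genG} and collecting terms, with no auxiliary identity invoked. The gap in your proposal is that it does not finish. After the same substitution you reduce the theorem to the identity $(\star)$, namely $2\sum_{i+j=k,\,i,j\ge2}G\binom{i,j}{a,a}+G^{\tsha}\binom{1,k-1}{a,2a}+G^{\tsha}\binom{1,k-1}{a,a}=0$, call it ``the hard part'', and only sketch how one might prove it (antipode relation, homomorphy of $\zeta^{\tsha}\star g^{\tsha}$) without carrying any of that out. A reduction to an unproved sum formula is not a proof.

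Moreover, the consistency check you offer for $(\star)$ fails, which signals a slip either in your bookkeeping or in the source lemma. At level $1$ your $(\star)$ becomes $\sum_{i+j=k,\,i\ge1,\,j\ge2}G^{\sha}_{i,j}=0$; this is not transparent, and for $k=4$ it is false: the double shuffle computation recorded in the proof of \Cref{lem:genG} gives $G_2^2=2G_{2,2}+G_4=2G_{2,2}+4G^{\sha}_{1,3}$, hence $G^{\sha}_{1,3}=\tfrac{1}{4}G_4$ and $G^{\sha}_{1,3}+G_{2,2}=\tfrac{1}{2}G_2^2-\tfrac{1}{4}G_4$, whose constant term $\tfrac{1}{2}\zeta(2)^2-\tfrac{1}{4}\zeta(4)$ is nonzero. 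Concretely, your intermediate identity $(\ast)$ does not match \Cref{lem:genG} as printed: the term $-(X^{k-2}-Y^{k-2})$ vanishes at $X=Y=1$, so the coefficient $k-3$ on your left-hand side presupposes the corrected reading $-(X^{k-2}+Y^{k-2})$, and the coefficient of $G^{\tsha}\binom{1,k-1}{a,2a}$ produced by the printed right-hand side is $2\cdot2^{k-2}-2=2^{k-1}-2$ rather than your $2^{k-1}-4$ (each of the two subtracted sums over $i>0,\,j>1$ contributes exactly one boundary term). Before anything else you need to pin down the correct form of \Cref{lem:genG} --- its printed statement is not consistent with its own proof --- redo the specialization, and then either show that no residual identity remains, which is what the paper's one-line proof implicitly claims, or supply a complete proof of whatever residual identity does remain.
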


\begin{proof}
It follows by inserting $(x_1,x_2)=(1,1)$ and $a_1=a_2=a$ in \Cref{lem:genG}.
\end{proof}

\bibliographystyle{plain}
\bibliography{references}

\end{document}